\newcommand{\mR}{\mathbf{R}}                    
\newcommand{\mC}{\mathbf{C}}                    
\newcommand{\abs}[1]{\lvert #1 \rvert}          
\newcommand{\norm}[1]{\lVert #1 \rVert}         
\newcommand{\br}[1]{\langle #1 \rangle}         
\newcommand{\mS}{\mathscr{S}}
\newcommand{\mF}{\mathscr{F}}
\newcommand{\ehat}{\,\hat{\rule{0pt}{6pt}}\,}
\newcommand{\re}{\mathrm{Re}}
\newcommand{\im}{\mathrm{Im}}
\newcommand{\supp}{\mathrm{supp}}
\newcommand{\mOp}{\mathrm{Op}}
\theoremstyle{definition}
\newtheorem{thm}{Theorem}[section]
\newtheorem{prop}[thm]{Proposition}
\newtheorem{lemma}[thm]{Lemma}
\newtheorem*{definition}{Definition}
\numberwithin{equation}{section}
\title{Inverse scattering for the magnetic Schr\"odinger operator}
\author{Lassi P\"aiv\"arinta}
\address{Department of Mathematics and Statistics, University of Helsinki}
\email{lassi.paivarinta@helsinki.fi}
\author{Mikko Salo}
\address{Department of Mathematics and Statistics, University of Helsinki}
\email{mikko.salo@helsinki.fi}
\author{Gunther Uhlmann}
\address{Department of Mathematics, University of Washington}
\email{gunther@math.washington.edu}
\date{August 27, 2009}
\begin{document}

\begin{abstract}
We show that fixed energy scattering measurements for the magnetic Schr\"odinger operator uniquely determine the magnetic field and electric potential in dimensions $n \geq 3$. The magnetic potential, its first derivatives, and the electric potential are assumed to be exponentially decaying. This improves an earlier result of Eskin and Ralston \cite{eskinralston} which considered potentials with many derivatives. The proof is close to arguments in inverse boundary problems, and is based on constructing complex geometrical optics solutions to the Schr\"odinger equation via a pseudodifferential conjugation argument.
\end{abstract}

\maketitle

\section{Introduction} \label{sec:intro}

This paper concerns inverse scattering problems at a fixed energy for the magnetic Schr\"odinger operator, defined by 
\begin{equation*}
H = \sum_{j=1}^n (D_j + A_j)^2 + V
\end{equation*}
where $A: \mR^n \to \mR^n$ and $V: \mR^n \to \mR$ are the magnetic and electric potentials, respectively. We will assume that $n \geq 3$ and that the potentials are exponentially decaying. The precise condition will be 
\begin{equation} \label{av_assumptions}
A \in e^{-\gamma_0\br{x}} W^{1,\infty}(\mR^n ; \mR^n),\quad V \in e^{-\gamma_0\br{x}} L^{\infty}(\mR^n ; \mR).
\end{equation}
Here $\gamma_0 > 0$, $\br{x} = (1+\abs{x}^2)^{1/2}$, $D_j = -i \partial/\partial x_j$, and we use the notation $a X = \{ a f ; f \in X \}$ for a positive function $a$ and a function space $X$.

The main result states that if the scattering matrices for two sets of exponentially decaying coefficients coincide at a fixed energy, then the magnetic fields and electric potentials have to be the same. The magnetic field that corresponds to a potential $A$ is given by the $2$-form $dA$, defined by 
\begin{equation*}
dA = \sum_{j,k=1}^n \left( \frac{\partial A_k}{\partial x_j} - \frac{\partial A_j}{\partial x_k} \right) dx^j \wedge dx^k.
\end{equation*}
See Section \ref{sec:preliminaries} for the precise definition of the scattering matrix.

\begin{thm} \label{thm:maintheorem}
Suppose $A$, $V$ and $A'$, $V'$ satisfy \eqref{av_assumptions}, and let $\Sigma_{\lambda}$ and $\Sigma_{\lambda}'$ be the corresponding scattering matrices. If $\Sigma_{\lambda} = \Sigma_{\lambda}'$ for some fixed $\lambda > 0$, then $dA \equiv dA'$ and $V \equiv V'$.
\end{thm}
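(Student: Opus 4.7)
My plan is to follow the complex geometrical optics (CGO) paradigm of Sylvester--Uhlmann, adapted to the magnetic Schr\"odinger operator via the pseudodifferential conjugation idea of Nakamura--Uhlmann.

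\emph{Step 1: reduction to an integral identity.} I would first translate $\Sigma_\lambda = \Sigma_\lambda'$ into the orthogonality relation
\begin{equation*}
\int_{\mR^n}\left[2(A-A')\cdot Du - i\mdiv(A-A')u + (\abs{A}^2 - \abs{A'}^2 + V - V')u\right]v\,dx = 0
\end{equation*}
valid for every pair of Faddeev-type solutions $u,v$ of $(H-\lambda)u = 0$ and $(H'-\lambda)v = 0$. This reduction is standard at a fixed energy: the scattering amplitude is determined by, and determines, such a bilinear identity, and the exponential decay in \eqref{av_assumptions} guarantees that the relevant boundary terms at infinity vanish in suitable weighted $L^2_\delta$ spaces.

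\emph{Step 2: CGO construction, the main obstacle.} Next I would seek solutions $u_\zeta = e^{x\cdot\zeta}(a+r)$ with $\zeta\in\mC^n$, $\zeta\cdot\zeta = -\lambda$, and $\abs{\zeta}\to\infty$. Conjugation gives
\begin{equation*}
e^{-x\cdot\zeta}(H-\lambda)e^{x\cdot\zeta} = -\Delta + 2\zeta\cdot D + 2\zeta\cdot A + 2A\cdot D + (-i\mdiv A + \abs{A}^2 + V),
\end{equation*}
and the genuinely difficult first-order term is $2\zeta\cdot A$, which stays large as $\abs{\zeta}\to\infty$. When $A$ lies only in $W^{1,\infty}$ one cannot remove it by smoothly solving the transport equation $2\zeta\cdot\nabla a + 2i(\zeta\cdot A)a = 0$ with the estimates needed to control $r$; this is precisely the regularity obstacle that forced Eskin--Ralston to require many derivatives on the coefficients. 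The remedy is a pseudodifferential conjugation: build a symbol $\Phi(x,\xi;\zeta)$ so that $e^{-\Phi(x,D;\zeta)}\bigl(e^{-x\cdot\zeta}(H-\lambda)e^{x\cdot\zeta}\bigr)e^{\Phi(x,D;\zeta)}$ becomes a bounded perturbation of the Faddeev Laplacian $-\Delta + 2\zeta\cdot D$, for which the weighted $L^2_\delta$ resolvent estimates of Sylvester--Uhlmann give a Neumann-series remainder $\norm{r}_{L^2_\delta} = O(\abs{\zeta}^{-1})$. I expect the bulk of the technical work, and the main delicacy, to lie in constructing $\Phi$ and estimating the conjugated operator uniformly along the complex curve $\zeta\cdot\zeta = -\lambda$ and in the exponentially weighted class \eqref{av_assumptions}.

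\emph{Step 3: recovery of $dA$ and $V$.} With CGO solutions available for both operators, I would choose families $\zeta_h, \zeta'_h$ with $\zeta_h + \zeta'_h \to -i\xi$ and substitute them into the identity of Step~1. The principal asymptotics, after exploiting the two-dimensional rotational freedom along the complex sphere $\zeta\cdot\zeta = -\lambda$, yield $\xi\wedge\widehat{(A-A')}(\xi) = 0$, i.e.\ $\widehat{dA}(\xi) = \widehat{dA'}(\xi)$; since both sides are entire in $\xi$ by the exponential decay of $A$ and $A'$, the equality propagates to all $\xi\in\mR^n$, so $dA\equiv dA'$. Writing then $A' = A + d\psi$ with $\psi$ inheriting the exponential decay, the gauge covariance $u\mapsto e^{i\psi}u$ of the magnetic Schr\"odinger equation reduces the problem to $A = A'$; the identity collapses to $\int(V-V')uv\,dx = 0$, and inserting CGO solutions once more gives $\widehat{(V-V')}(\xi) = 0$ for all $\xi$, whence $V\equiv V'$.
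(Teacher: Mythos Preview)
Your outline captures the CGO construction and the final recovery argument correctly, but there is a genuine gap between Step~1 and Step~2 that you have not addressed, and this is in fact one of the central difficulties of the paper.

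The integral identity you write down in Step~1 does \emph{not} automatically hold for the CGO solutions you construct in Step~2. The reduction from $\Sigma_\lambda = \Sigma_\lambda'$ yields the identity first for \emph{scattering solutions} $P(\lambda)g$, $P'(\lambda)g'$ (which lie in $B^*$), and then, via a density argument (Proposition~\ref{prop:density}), for all solutions $w,w'$ with exponential growth strictly smaller than $\gamma_0/2$. But the CGO solutions $u_\rho = e^{i\rho\cdot x}(1+v_\rho)$ that carry the asymptotic information you need in Step~3 have $\abs{\mathrm{Im}\,\rho}\to\infty$, so they grow at a rate far exceeding $\gamma_0/2$ and are not covered by the identity. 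Your claim that ``the exponential decay in \eqref{av_assumptions} guarantees that the relevant boundary terms at infinity vanish'' is exactly what fails here: for large $\abs{\rho}$ the products $u_\rho \overline{u'_{\rho'}}$ need not be integrable against the coefficients, and the boundary pairing argument breaks down.

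The paper closes this gap by an analytic Fredholm argument (Section~\ref{sec:analyticity}): one parametrizes $\rho = \rho(t)$ by a complex variable $t$, shows that $t\mapsto v_{\rho(t)}$ extends analytically off the real variety $\rho\cdot\rho = \lambda$ into exponentially weighted spaces, and observes that the pairing $I(t)$ is analytic in $t$ away from a discrete set. For small real $t$ one has $\abs{\mathrm{Im}\,\rho(t)} < \gamma_0/2$, so $I(t)=0$ there by the density step; analyticity then forces $I(t)\equiv 0$, in particular as $t\to\infty$ where the CGO asymptotics apply. Without this bridge your Step~3 cannot be reached from Step~1. You should also be aware that the limit in Step~3 produces a \emph{nonlinear} Fourier transform (because of the phase $e^{i\phi}$ in the amplitude $a$), and one needs the Eskin--Ralston observation to reduce it to the ordinary Fourier transform before concluding $dA = dA'$.
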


We now describe earlier results on the problem. In dimension $n\ge 3$ and in the case of no magnetic potential and a compactly supported electric potential (that is, $A \equiv 0$ and $V \in L^{\infty}_{\text{c}}$), uniqueness for the fixed energy scattering problem was proven in \cite{nachman}, \cite{novikov}, \cite{ramm}. In the earlier paper \cite{novikovkhenkin} this was done for small potentials.  For compactly supported potentials, know\-ledge of the scattering amplitude at fixed energy is equivalent to knowing the Dirichlet-to-Neumann map for the Schr\"odinger equation measured on the boundary of a large ball containing the support of the potential (see \cite{uhlmannasterisque} for an account). Then the uniqueness result of Sylvester-Uhlmann \cite{sylvesteruhlmann} for the Dirichlet-to-Neumann map, based on complex geometrical optics solutions, implies uniqueness at a fixed energy for compactly supported potentials. Melrose \cite{melrose} suggested a related proof that uses the density of products of scattering solutions. The fixed energy result was extended by Novikov to the case of exponentially decaying potentials \cite{novikov_exp}. Another proof using arguments similar to the ones used for studying the Dirichlet-to-Neumann map was given in \cite{uhlmannvasy}. We note that exponential decay is a natural assumption given
the counterexamples to uniqueness for inverse scattering at fixed energy for Schwartz potentials due to Grinevich and Novikov
\cite{GrinNov}.

The fixed energy result for compactly supported potentials in the two dimensional case follows from the corresponding uniqueness result for the Dirichlet-to-Neumann map of Bukhgeim \cite{bukhgeim}.

For the magnetic Schr\"odinger equation with smooth magnetic and electric potentials with $n \geq 3$, it was shown in \cite{nakamurasunuhlmann} that the Dirichlet-to-Neumann map measured on the boundary of any domain determines uniquely the magnetic field and electric potential. The smoothness assumptions were relaxed in \cite{tolmasky}, \cite{salothesis}, \cite{salo}. These results imply the corresponding uniqueness theorem for the fixed energy scattering problem for compactly supported potentials, see \cite{eskinralston_ima} for this reduction in the magnetic case. Uniqueness for exponentially decaying potentials with $A \in C^{n+5}$ and $V \in C^{n+4}$ was proved in \cite{eskinralston} based on a method involving integral equations. A connection between the integral equations method and complex geometrical optics solutions is given in \cite{sun_exponential}. We also mention the work \cite{isozaki} which studies inverse scattering for Dirac operators, and \cite{weder}, \cite{wederyafaev} which consider fixed energy inverse scattering for short range potentials having regular behavior at infinity.

In this paper, the general outline for proving the uniqueness result is the same as in \cite{melrose} and \cite{uhlmannvasy} and consists of the following steps:

\begin{enumerate}
\item[1.] 
The scattering matrices coincide at a fixed energy. Thus one obtains, by using a version of Green's theorem at infinity, an integral identity relating the difference of potentials with products of scattering solutions of the Schr\"odinger equation.
\item[2.] 
The scattering solutions are dense in the space of all solutions with sufficiently small exponential growth, allowing to use such solutions in the integral identity.
\item[3.] 
Finally, one employs an analytic Fredholm argument to pass from solutions with small exponential growth to the complex geometrical optics solutions which may grow very rapidly. These solutions can be used to show that the coefficients are uniquely determined.
\end{enumerate}

The arguments for achieving Steps 1 and 2 above are very close to \cite{uhlmannvasy}, except that we have used the Agmon-H\"ormander spaces $B$ and $B^*$ to obtain more precise statements. However, the construction of suitable complex geometrical optics solutions is considerably more difficult in the magnetic case. This part involves the global version, established in \cite{salo} for compactly supported coefficients, of the pseudodifferential conjugation argument in \cite{nakamurauhlmann}. We note that a similar pseudodifferential conjugation also appears in \cite{eskinralston}. It is shown in Section \ref{sec:cgo_solutions} that the method in \cite{salo} can be extended to coefficients satisfying short range conditions, and we give a rather precise construction of complex geometrical optics solutions in this case.

The structure of the paper is as follows. Section \ref{sec:intro} is the introduction, and Section \ref{sec:preliminaries} contains well-known results for the direct scattering problem. However, since we could not find precise references for all results in the present setting, we give a rather careful account based on the exposition in \cite{H2}. In Section \ref{sec:cgo_solutions} we present the semiclassical pseudodifferential conjugation approach and analogs of the Sylvester-Uhlmann estimates \cite{sylvesteruhlmann} which allow to construct complex geometrical optics solutions for short range coefficients. The analytic Fredholm argument required to go from solutions with small exponential growth to complex geometrical optics solutions is presented in Section \ref{sec:analyticity}, and the uniqueness result, Theorem \ref{thm:maintheorem}, is proved in Section \ref{sec:uniqueness}.

\subsection*{Acknowledgements}

L.P.~and M.S.~are partly supported by the Academy of Finland, and G.U.~is supported in part by NSF and a Walker Family Endowed Professorship.

\section{Preliminaries} \label{sec:preliminaries}

In this section we recall some basic results in scattering theory related to the resolvent and scattering matrix for $H$. To obtain precise statements, the results will be formulated in terms of the Agmon-H\"ormander spaces $B$ and $B^*$. We refer to \cite[Chapter XIV]{H2} for more details on this approach.

\subsection{Function spaces}

The space $B$ (see \cite[Section 14.1]{H2}) is the set of those $u \in L^2(\mR^n)$ for which the norm 
\begin{equation*}
\norm{u}_B = \sum_{j=1}^{\infty} ( 2^{j-1} \int_{X_j} \abs{u}^2 \,dx )^{1/2}
\end{equation*}
is finite. Here $X_1 = \{ \abs{x} < 1 \}$ and $X_j = \{ 2^{j-2} < \abs{x} < 2^{j-1} \}$ for $j \geq 2$. This is a Banach space whose dual $B^*$ consists of all $u \in L^2_{\text{loc}}(\mR^n)$ such that 
\begin{equation*}
\norm{u}_{B^*} = \sup_{R > 1} \frac{1}{R} \int_{\abs{x} < R} \abs{u}^2 \,dx < \infty.
\end{equation*}
The set $C^{\infty}_c(\mR^n)$ is dense in $B$ but not in $B^*$. The closure in $B^*$ is denoted by $\mathring{B}^*$, and $u \in B^*$ belongs to $\mathring{B}^*$ iff 
\begin{equation*}
\lim_{R \to \infty} \frac{1}{R} \int_{\abs{x} < R} \abs{u}^2 \,dx = 0.
\end{equation*}
We will also need the Sobolev space variant $B^*_2$ of $B^*$, defined via the norm 
\begin{equation*}
\norm{u}_{B^*_2} = \sum_{\abs{\alpha} \leq 2} \norm{D^{\alpha} u}_{B^*}.
\end{equation*}
Let $L^2_{\delta}$ and $H^s_{\delta}$ be the weighted $L^2$ and Sobolev spaces in $\mR^n$ with norms 
\begin{equation*}
\norm{u}_{L^2_{\delta}} = \norm{\br{x}^{\delta} u}_{L^2}, \qquad \norm{u}_{H^s_{\delta}} = \norm{\br{x}^{\delta} u}_{H^s}.
\end{equation*}
Then one has $L^2_{\delta} \subseteq B \subseteq L^2_{1/2}$ and $L^2_{-1/2} \subseteq B^* \subseteq L^2_{-\delta}$ for any $\delta > 1/2$.

If $X$ is a function space and $a$ is a smooth positive function, we write $aX = \{ af \,;\, f \in X \}$ and $\norm{u}_{aX} = \norm{a^{-1} u}_X$. Note that $e^{\gamma \br{x}} H^k$ has equivalent norm $u \mapsto \sum_{\abs{\alpha} \leq k} \norm{e^{-\gamma \br{x}} \partial^{\alpha} u}_{L^2}$. For $u, v \in L^2(\mR^n)$ we use the sesquilinear pairing 
\begin{equation*}
(u|v) = \int_{\mR^n} u \bar{v} \,dx,
\end{equation*}
and we continue to use this notation if $u$ is in some function space and $v$ is in its dual.

If $\lambda > 0$ we will consider the sphere $M_{\lambda} = \{ \abs{\xi} = \sqrt{\lambda} \}$ with Euclidean surface measure $dS_{\lambda}$. The corresponding $L^2$ space is $L^2(M_{\lambda}) = L^2(M_{\lambda}, dS_{\lambda})$, and of course $L^2(S^{n-1}) = L^2(M_1)$.

The Fourier transform on functions in $\mR^n$ is defined by 
\begin{equation*}
\hat{f}(\xi) = \mF f(\xi) = \int_{\mR^n} e^{-ix \cdot \xi} f(x) \,dx,
\end{equation*}
and the inverse Fourier transform is 
\begin{equation*}
f(x) = \mF^{-1} \hat{f}(x) = (2\pi)^{-n} \int_{\mR^n} e^{ix \cdot \xi} \hat{f}(\xi) \,d\xi.
\end{equation*}

\subsection{Resolvents}

Let $H_0 = -\Delta$ be the free Schr\"odinger operator in $\mR^n$, and write $R_0(\lambda) = (H_0 - \lambda)^{-1}$ for the free resolvent if $\lambda \in \mC \smallsetminus [0,\infty)$. The limits $R_0(\lambda \pm i0)$ as $\lambda$ approaches the positive real axis are well defined and have the following properties.

\begin{definition}
We write 
\begin{equation*}
u \sim u_0
\end{equation*}
if $u$ has the same asymptotics as $u_0$ at infinity, meaning that $u = \psi(r) u_0 + u_1$ for some $u_1 \in \mathring{B}^*$. Here $\psi \in C^{\infty}(\mR)$ is a fixed function with $\psi(t) = 0$ for $\abs{t} \leq 1$ and $\psi(t) = 1$ for $\abs{t} \geq 2$, and we write $x = r\theta$ with $r \geq 0$ and $\theta \in S^{n-1}$.
\end{definition}

\begin{prop} \label{free_resolvent_estimates}
If $\lambda > 0$, then $R_0(\lambda \pm i0)$ is bounded $B \to B^*_2$ and 
\begin{equation} \label{free_resolvent_asymptotics}
R_0(\lambda \pm i0) f \sim c_{\lambda} (\pm 1)^{\frac{n+1}{2}} r^{-\frac{n-1}{2}} e^{\pm i\sqrt{\lambda} r} \hat{f}(\pm \sqrt{\lambda} \theta)
\end{equation}
where $c_{\lambda} = (\sqrt{\lambda}/2\pi i)^{\frac{n-3}{2}} / 4\pi$ and $(-1)^{\frac{n+1}{2}} = i^{n+1}$. If $u \in B^*$ is such that $(H_0-\lambda) u = f \in B$, then 
\begin{equation*}
u = u_{\pm} + R_0(\lambda \mp i0) f
\end{equation*}
where $u_{\pm} = \mF^{-1} \{ v_{\pm} \,dS_{\lambda} \}$ for some $v_{\pm} \in L^2(M_{\lambda})$.
\end{prop}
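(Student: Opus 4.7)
Proof plan: The proposition combines the limiting absorption principle for the flat Laplacian in the Agmon--Hörmander spaces, the explicit outgoing/incoming asymptotics of the free Green's function, and the classification of $B^*$-solutions of $(H_0-\lambda)v = 0$; each is a standard result from \cite[Chapter~XIV]{H2}, and I will reduce each to Fourier-side analysis of the multiplier $(\abs{\xi}^2 - \lambda \mp i0)^{-1}$.

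For the boundedness $R_0(\lambda\pm i0) : B \to B^*_2$, after rescaling to $\lambda = 1$ the Sokhotski--Plemelj decomposition $(\abs{\xi}^2 - 1 \mp i0)^{-1} = \mathrm{P.V.}(\abs{\xi}^2-1)^{-1} \pm \tfrac{i\pi}{2}\delta(\abs{\xi}-1)$ reduces the $\delta$-contribution to the composition of Fourier restriction $B \to L^2(M_1)$ with Fourier extension $L^2(M_1) \to B^*$, which together form the Agmon--Kato--Kuroda trace/extension estimates. The principal-value part reduces to a weighted Hilbert-transform bound in the radial variable. Two derivatives are recovered from the relation $-\Delta R_0(\lambda\pm i0) = I + \lambda R_0(\lambda\pm i0)$, together with bounded Riesz-type multipliers for the off-diagonal second derivatives.

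The explicit asymptotic \eqref{free_resolvent_asymptotics} comes from the Hankel representation of the kernel: writing $R_0(\lambda + i0)$ as an integral against a constant multiple of $\abs{x-y}^{-(n-2)/2} H^{(1)}_{(n-2)/2}(\sqrt{\lambda}\abs{x-y})$, the large-argument expansion of the Hankel function combined with $\abs{x-y} = r - \theta\cdot y + O(1/r)$ for $y$ bounded yields, for $f \in C^{\infty}_c$, the pointwise kernel behaviour $c_\lambda r^{-(n-1)/2} e^{i\sqrt{\lambda}r} e^{-i\sqrt{\lambda}\theta\cdot y}$, which integrates against $f$ to produce the factor $\hat f(\sqrt{\lambda}\theta)$ modulo $\mathring{B}^*$. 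The constants and the phase $i^{n+1}$ are tracked from the Hankel asymptotic, the $-i0$ case is the complex conjugate, and density of $C^{\infty}_c$ in $B$ together with the continuity of both sides---the trace map $f \mapsto \hat f(\pm\sqrt{\lambda}\,\cdot\,)$ is bounded $B \to L^2(M_\lambda)$---extends the formula to all $f \in B$.

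For the representation of a general $u \in B^*$ with $(H_0-\lambda) u = f \in B$, set $v = u - R_0(\lambda \mp i0) f \in B^*$. Then $(\abs{\xi}^2 - \lambda) \hat v = 0$, so $\hat v$ is supported on the smooth sphere $M_\lambda$. Since $M_\lambda$ is non-characteristic for the defining symbol $\abs{\xi}^2 - \lambda$, any such distribution is of the form $\hat v = w \, dS_\lambda$ for some distribution $w$ on $M_\lambda$ (no normal-derivative of $\delta_{M_\lambda}$ terms arise). The main obstacle is upgrading $w$ from a distribution to an honest element $v_\pm \in L^2(M_\lambda)$: this is the content of Hörmander's Theorem~14.3.2 and relies on the sharp extension/trace duality between $L^2(M_\lambda)$ and $B^*/\mathring{B}^*$-equivalence classes of homogeneous solutions---the weaker information $v \in L^2_{-\delta}$ for $\delta > 1/2$ would not suffice to recover $L^2$ regularity on the sphere. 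Once this is in hand, $u_\pm := \mF^{-1}\{v_\pm \, dS_\lambda\}$ furnishes the required decomposition.
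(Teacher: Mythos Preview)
Your proposal is correct and follows essentially the same route as the paper: both obtain the $B\to B^*_2$ bound from the Agmon--H\"ormander machinery in \cite[Section~14.3]{H2}, derive the asymptotic \eqref{free_resolvent_asymptotics} first for $f\in C^\infty_c$ via the Hankel/stationary-phase expansion of the free kernel and then extend to $f\in B$ by density together with continuity of the trace $f\mapsto \hat f|_{M_\lambda}:B\to L^2(M_\lambda)$, and reduce the decomposition $u=u_\pm + R_0(\lambda\mp i0)f$ to H\"ormander's classification of $B^*$-solutions of the Helmholtz equation. The paper is simply terser---it cites \cite[Section~1.7]{melrose} for the smooth-$f$ asymptotics and \cite[Theorem~14.3.8]{H2} (rather than the~14.3.2 you name) for the $L^2(M_\lambda)$-density result---whereas you sketch the underlying arguments, but the logical skeleton is identical.
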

\begin{proof}
The boundedness follows from \cite[Section 14.3]{H2}. If $f \in B$ choose $f_j \in C^{\infty}_c(\mR^n)$ with $f_j \to f$ in $B$. Then by \cite[Section 1.7]{melrose}
\begin{equation*}
R_0(\lambda \pm i0) f_j = c_{\lambda} (\pm 1)^{\frac{n+1}{2}} \psi(r) r^{-\frac{n-1}{2}} e^{\pm i\sqrt{\lambda} r} \hat{f}_j(\pm \sqrt{\lambda} \theta) + v_j
\end{equation*}
where $v_j$ is smooth and $\abs{v_j(r\theta)} \leq C r^{-\frac{n-3}{2}}$, so that $v_j \in \mathring{B}^*$. Since $R_0(\lambda \pm i0)$ is bounded $B \to B^*$, and since the map 
\begin{equation*}
f \mapsto \hat{f}|_{M_{\lambda}}
\end{equation*}
is bounded $B \to L^2(M_{\lambda})$ by \cite[Theorem 14.1.1]{H2}, we have that $v_j$ converges to some $v \in \mathring{B}^*$ as $j \to \infty$. This proves \eqref{free_resolvent_asymptotics}, and the last part follows from \cite[Theorem 14.3.8]{H2}.
\end{proof}

An operator 
\begin{equation*}
V(x,D) = \sum_{\abs{\alpha} \leq 1} a_{\alpha}(x) D^{\alpha},
\end{equation*}
where $a_{\alpha} \in L^2_{\text{loc}}(\mR^n)$, is said to be \emph{short range} if $V(x,D)$ (initially defined on $C^{\infty}_c(\mR^n)$) extends to a compact operator from $B^*_2$ into $B$. A sufficient condition for $V(x,D)$ to be short range is that there exists a decreasing function $M: [0,\infty) \to [0,\infty)$ with $\int_0^{\infty} M(t) \,dt < \infty$, such that 
\begin{equation*}
\abs{a_{\alpha}(x)} \leq M(\abs{x}), \quad \text{for all } \abs{\alpha} \leq 1.
\end{equation*}
We will assume that $V(x,D)$ is short range and also symmetric, that is, 
\begin{equation*}
(V(x,D) u|v) = (u|V(x,D) v), \quad u, v \in C^{\infty}_c(\mR^n).
\end{equation*}
For symmetric short range $V$ the last identity remains true for $u, v \in B^*_2$ (see \cite[Section 14.4]{H2}). Any such $V(x,D)$ must be of the magnetic form 
\begin{equation*}
V(x,D) = 2A \cdot D + A^2 + D \cdot A + V \quad \text{ with $A(x)$, $V(x)$ real}.
\end{equation*}

The perturbed Hamiltonian is given by 
\begin{equation*}
H = H_0 + V(x,D).
\end{equation*}
If $V(x,D)$ is symmetric and short range then $H$, with domain $\mS(\mR^n)$, is essentially self-adjoint on $L^2(\mR^n)$ \cite[Theorem 14.4.4]{H2}.

The resolvent of $H$ on the real axis is given by 
\begin{equation*}
R(\lambda \pm i0) = R_0(\lambda \pm i0) (I+V(x,D) R_0(\lambda \pm i0))^{-1}, \quad \lambda > 0, \ \lambda \notin \Lambda.
\end{equation*}
Here $\Lambda = \{ \lambda \in \mR \smallsetminus \{0\} \,;\, (H-\lambda)u = 0 \text{ for some } u \in L^2, u \neq 0 \}$ is the set of eigenvalues. By \cite[Section 14.5]{H2} the operator $I+V(x,D) R_0(\lambda \pm i0)$ is invertible on $B$ when $\lambda > 0$ is not an eigenvalue, and the resolvent maps $B$ to $B_2^*$. If 
\begin{equation} \label{ucp_condition}
\abs{a_{\alpha}(x)} \leq C \br{x}^{-1}, \quad \abs{\alpha} \leq 1,
\end{equation}
then the fact that any eigenfunction is rapidly decreasing \cite[Theorem 14.5.5]{H2} and unique continuation at infinity \cite[Theorem 5.2]{hormander_ucp} show that there are no positive eigenvalues and the resolvent is defined for all $\lambda > 0$.

\subsection{Scattering matrix}

Let $V(x,D)$ be a symmetric short range perturbation. By \cite[Theorem 14.4.6]{H2} the wave operators, defined by 
\begin{equation*}
W_{\pm} u = \lim_{t \to \pm \infty} e^{itH} e^{-it H_0} u, \quad u \in L^2(\mR^n),
\end{equation*}
exist as strong limits and are isometric operators intertwining the closures of $H$ and $H_0$. Their range is the projection of $L^2$ onto continuous spectrum, and the scattering operator 
\begin{equation*}
S = W_+^* W_-
\end{equation*}
is a unitary operator on $L^2(\mR^n)$ \cite[Theorem 14.6.5]{H2}.

Let $\lambda > 0$. It follows from \cite[Theorem 14.6.8]{H2} that there exists a unitary map $\Sigma_{\lambda}$ on $L^2(M_{\lambda}, dS_{\lambda}/2\sqrt{\lambda})$ such that for $f \in L^2(\mR^n)$ one has 
\begin{equation*}
(\mF S \mF^{-1}) f |_{M_{\lambda}} = \Sigma_{\lambda}(f|_{M_{\lambda}})
\end{equation*}
for almost every $\lambda$. The map $\Sigma_{\lambda}$ is the \emph{scattering matrix} at energy $\lambda$.

It will be convenient to describe the scattering matrix in terms of the Poisson operators. The free Poisson operator acts on functions $g \in L^2(M_{\lambda})$ by 
\begin{equation*}
P_0(\lambda) g(x) = \frac{i}{(2\pi)^{n-1}} \int_{M_{\lambda}} e^{i x \cdot \xi} g(\xi) \,\frac{dS_{\lambda}(\xi)}{2\sqrt{\lambda}}, \qquad \lambda > 0,
\end{equation*}
and the (outgoing) Poisson operator for $H$ is defined by 
\begin{equation*}
P(\lambda)g = P_0(\lambda) g - R(\lambda + i0)(V(x,D) P_0(\lambda) g).
\end{equation*}
These operators map $L^2(M_{\lambda})$ continuously into $B_2^*$ by \cite[Theorem 7.1.26]{H2} and the above discussion, and one has $(H_0-\lambda) P_0(\lambda) g = (H-\lambda) P(\lambda) g = 0$. Stationary phase \cite[Theorem 7.7.14]{H2} and an approximation argument imply the asymptotics (with $c_{\lambda}$ given in Proposition \ref{free_resolvent_asymptotics}) 
\begin{equation} \label{free_poisson_asymptotics}
P_0(\lambda) g \sim c_{\lambda} r^{-\frac{n-1}{2}} \left[ e^{i\sqrt{\lambda} r} g(\sqrt{\lambda} \theta) + i^{n-1} e^{-i\sqrt{\lambda} r} g(-\sqrt{\lambda} \theta) \right].
\end{equation}

\begin{prop} \label{prop:poisson_scattering_matrix}
If $g \in L^2(M_{\lambda})$ then 
\begin{equation} \label{perturbed_poisson_asymptotics}
P(\lambda) g \sim c_{\lambda} r^{-\frac{n-1}{2}} \left[ e^{i\sqrt{\lambda} r} (\Sigma_{\lambda} g)(\sqrt{\lambda} \theta) + i^{n-1} e^{-i\sqrt{\lambda} r} g(-\sqrt{\lambda}\theta) \right].
\end{equation}
\end{prop}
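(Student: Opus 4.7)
The plan is to compute the asymptotics of $P(\lambda)g$ directly from the formula $P(\lambda)g = P_0(\lambda)g - R(\lambda+i0)(V(x,D)P_0(\lambda)g)$, and then identify the outgoing coefficient that appears with $(\Sigma_\lambda g)(\sqrt\lambda\theta)$ by invoking the stationary characterization of the scattering matrix in \cite[Theorem 14.6.8]{H2}.

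First, by \eqref{free_poisson_asymptotics} the free Poisson part already contributes an outgoing term with coefficient $g(\sqrt\lambda\theta)$ and an incoming term with coefficient $i^{n-1}g(-\sqrt\lambda\theta)$. For the resolvent term, note that $P_0(\lambda)g \in B_2^*$ by the mapping property of $P_0(\lambda)$, and the short range hypothesis on $V(x,D)$ then gives $f := V(x,D) P_0(\lambda)g \in B$. Since $\lambda \notin \Lambda$ we may write $R(\lambda+i0)f = R_0(\lambda+i0)\tilde h$ with $\tilde h := (I+V(x,D)R_0(\lambda+i0))^{-1} f \in B$, and then \eqref{free_resolvent_asymptotics} with the $+$ sign gives
$$R(\lambda+i0)\bigl(V(x,D)P_0(\lambda)g\bigr) \sim c_\lambda r^{-(n-1)/2} e^{i\sqrt\lambda r}\,\hat{\tilde h}(\sqrt\lambda\theta).$$
There is no incoming contribution from this term, because the outgoing free resolvent has purely outgoing asymptotics.

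Subtracting yields
$$P(\lambda)g \sim c_\lambda r^{-(n-1)/2}\left[e^{i\sqrt\lambda r}\bigl(g(\sqrt\lambda\theta) - \hat{\tilde h}(\sqrt\lambda\theta)\bigr) + i^{n-1} e^{-i\sqrt\lambda r} g(-\sqrt\lambda\theta)\right].$$
The incoming coefficient already matches \eqref{perturbed_poisson_asymptotics}, so the only remaining point is to identify the outgoing coefficient $g - \hat{\tilde h}|_{M_\lambda}$ with $\Sigma_\lambda g$.

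The expected main obstacle is precisely this last identification: unwinding the time-dependent definition $S = W_+^* W_-$ directly is unwieldy. The clean route is to invoke the stationary description of $\Sigma_\lambda$ furnished by \cite[Theorem 14.6.8]{H2}, which states that every $u \in B^*$ solving $(H-\lambda)u = 0$ admits an asymptotic expansion of the form
$$u \sim c_\lambda r^{-(n-1)/2}\left[e^{i\sqrt\lambda r}a_+(\sqrt\lambda\theta) + i^{n-1}e^{-i\sqrt\lambda r}a_-(-\sqrt\lambda\theta)\right]$$
with $a_\pm \in L^2(M_\lambda)$ related by $a_+ = \Sigma_\lambda a_-$. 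Applied to $u = P(\lambda)g$, where the previous display forces $a_- = g$, this pins down the outgoing coefficient as $(\Sigma_\lambda g)(\sqrt\lambda\theta)$ and completes the proof. As a sanity check, the free case $V \equiv 0$ produces $\tilde h = 0$ and $\Sigma_\lambda = I$, which is consistent with \eqref{free_poisson_asymptotics}.
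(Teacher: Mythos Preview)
Your computation up to the point where you obtain the asymptotic with outgoing coefficient $g-\hat{\tilde h}|_{M_\lambda}$ is correct and matches the paper's first representation $P(\lambda)g = P_0(\lambda)g - R_0(\lambda+i0)w$. The issue is the final step: you invoke \cite[Theorem~14.6.8]{H2} as if it directly asserts that any $B^*$-solution of $(H-\lambda)u=0$ has an asymptotic expansion whose incoming/outgoing coefficients $a_\pm$ satisfy $a_+=\Sigma_\lambda a_-$. That asymptotic characterization is essentially the content of the proposition you are proving. As used in the paper, Theorem~14.6.8 furnishes $\Sigma_\lambda$ through the \emph{representation} data: with $u_\pm = u + R_0(\lambda\mp i0)Vu$ one has $u_\pm = \mF^{-1}\{v_\pm\,dS_\lambda/(2\sqrt\lambda)\}$ for some $v_\pm\in L^2(M_\lambda)$, and $\Sigma_\lambda v_- = v_+$. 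Identifying $v_\pm$ with the asymptotic coefficients still requires the resolvent/Poisson asymptotics, so appealing to the theorem in the form you state risks circularity.

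The paper closes this by explicitly producing the second representation. From $u=\tfrac{1}{2\pi i}P(\lambda)g$ one checks $u_-=\tfrac{1}{2\pi i}P_0(\lambda)g$, hence $v_-=g$; Theorem~14.6.8 then gives $v_+=\Sigma_\lambda g$, so $u_+=\tfrac{1}{2\pi i}P_0(\lambda)\Sigma_\lambda g$. Writing $u=u_+-R_0(\lambda-i0)Vu$ yields
\[
P(\lambda)g = P_0(\lambda)\Sigma_\lambda g - R_0(\lambda-i0)\tilde w,\qquad \tilde w\in B,
\]
and now \eqref{free_poisson_asymptotics} together with the $(-)$ case of \eqref{free_resolvent_asymptotics} (which contributes only an incoming term) reads off the outgoing coefficient as $(\Sigma_\lambda g)(\sqrt\lambda\theta)$. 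Your argument becomes rigorous once this step is inserted in place of the black-box appeal; otherwise the two proofs are the same.
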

\begin{proof}
Let $g \in L^2(M_{\lambda})$, and define $v_- = g$. As in \cite[Theorem 14.6.8]{H2} define 
\begin{equation} \label{uminus_poisson}
u_- = \mF^{-1} \{ v_- \delta_0(\abs{\xi}^2-\lambda) \} = \frac{1}{2\pi i} P_0(\lambda) g.
\end{equation}
Here by \cite[Theorem 6.1.5]{H2} one has $\delta_0(\abs{\xi}^2-\lambda) = dS_{\lambda}/2\sqrt{\lambda}$.

Let $u \in B_2^*$ satisfy $u = u_- - R_0(\lambda+i0) Vu$. Such a function $u$ exists by the proof of \cite[Theorem 14.6.8]{H2}, and then 
\begin{align} \label{u_poisson}
u &= u_- - R_0(\lambda+i0)Vu = u_- - R(\lambda+i0)V(I+R_0(\lambda+i0)V)u \notag \\
 &= \frac{1}{2\pi i} P(\lambda) g.
\end{align}
Define $u_+ = u + R_0(\lambda-i0)Vu$. Since $u_+ \in B^*$ and $(H_0-\lambda)u_+ = 0$, Proposition \ref{free_resolvent_estimates} shows that $u_+ = \mF^{-1}(v_+ \delta_0(\abs{\xi}^2-\lambda) \}$ for some $v_+$ in $L^2(M_{\lambda})$. By \cite[Theorem 14.6.8]{H2}, one has 
\begin{equation*}
\Sigma_{\lambda} v_- = v_+
\end{equation*}
and therefore 
\begin{equation} \label{uplus_poisson}
u_+ = \frac{1}{2\pi i} P_0(\lambda) \Sigma_{\lambda} g.
\end{equation}

Combining \eqref{u_poisson} with the representations of $u$ in terms of $u_-$ and $u_+$ and using \eqref{uminus_poisson} and \eqref{uplus_poisson}, we obtain 
\begin{align*}
P(\lambda) g &= P_0(\lambda) g - R_0(\lambda+i0) w, \\
P(\lambda) g &= P_0(\lambda) \Sigma_{\lambda} g - R_0(\lambda-i0) \tilde{w}
\end{align*}
for some $w, \tilde{w} \in B$. The result follows from \eqref{free_resolvent_asymptotics}, \eqref{free_poisson_asymptotics}, and the uniqueness of such asymptotics.
\end{proof}

The next result gives an analog of Green's theorem for this setting. This is the ''boundary pairing'' appearing in \cite{melrose}, \cite{uhlmannvasy}.

\begin{prop} \label{prop:boundary_pairing}
Assume $u, v \in B^*$ and $(H_0-\lambda)u \in B, (H_0-\lambda)v \in B$. If $u$ and $v$ have the asymptotics 
\begin{align}
u &\sim r^{-\frac{n-1}{2}} \left[ e^{i\sqrt{\lambda} r} g_+(\theta) + e^{-i\sqrt{\lambda} r} g_-(\theta) \right], \label{bp_u_asymptotic} \\
v &\sim r^{-\frac{n-1}{2}} \left[ e^{i\sqrt{\lambda} r} h_+(\theta) + e^{-i\sqrt{\lambda} r} h_-(\theta) \right] \label{bp_v_asymptotic}
\end{align}
for some $g_{\pm}, h_{\pm} \in L^2(S^{n-1})$, then 
\begin{equation*}
(u|(H_0-\lambda)v) - ((H_0-\lambda)u|v) = 2i\sqrt{\lambda} \left[ (g_+|h_+)_{S^{n-1}} - (g_-|h_-)_{S^{n-1}} \right].
\end{equation*}
Here $(g|h)_{S^{n-1}} = \int_{S^{n-1}} g \bar{h} \,dS$.
\end{prop}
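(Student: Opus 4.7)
The plan is to apply Green's identity on expanding balls and identify the boundary integral with the prescribed asymptotic data.

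First I would note that since $H_0=-\Delta$, the $-\lambda$ terms cancel in the difference, and integration by parts on $\{|x|<R\}$ gives
\begin{equation*}
\int_{|x|<R}\bigl[(\Delta u)\bar v - u\Delta\bar v\bigr]\,dx = \int_{|x|=R}\bigl[\partial_r u\cdot\bar v - u\cdot\overline{\partial_r v}\bigr]\,dS =: I(R).
\end{equation*}
To justify this, Proposition~\ref{free_resolvent_estimates} together with the decomposition $u=u_-+R_0(\lambda+i0)(H_0-\lambda)u$ (and elliptic regularity for $u_-$, which is Fourier-dual to an $L^2(M_{\lambda})$ density) places $u,v\in B^*_2$, so the spherical traces are well defined. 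Since $(H_0-\lambda)u,(H_0-\lambda)v\in B$ and $u,v\in B^*$, the $B$--$B^*$ duality makes the volume integral on the left converge, as $R\to\infty$, to $(u|(H_0-\lambda)v)-((H_0-\lambda)u|v)$; hence $\lim_{R\to\infty}I(R)$ exists and equals this quantity. It remains to identify this limit via the asymptotic data.

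I would then substitute $u=\psi(r)u_0+u_1$ and $v=\psi(r)v_0+v_1$, with $u_0=r^{-(n-1)/2}[e^{i\sqrt\lambda r}g_++e^{-i\sqrt\lambda r}g_-]$, analogously for $v_0$, and $u_1,v_1\in\mathring{B}^*$. For $R\geq 2$ one has $\psi\equiv 1$ on $|x|=R$, so $I(R)$ splits into a main piece built from $u_0,v_0$ and cross/error pieces involving at least one of $u_1,v_1,\partial_r u_1,\partial_r v_1$. A direct expansion of
\begin{equation*}
\partial_r u_0\cdot\overline{v_0}-u_0\cdot\overline{\partial_r v_0}
\end{equation*}
shows that the oscillatory products $e^{\pm 2i\sqrt\lambda r}$ cancel identically and even the $O(r^{-n})$ corrections coming from $\partial_r$ hitting $r^{-(n-1)/2}$ cancel between the two terms, leaving $2i\sqrt\lambda\,r^{-(n-1)}[g_+\bar h_+-g_-\bar h_-]$ up to genuinely lower-order remainders. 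Multiplying by $R^{n-1}\,dS_{S^{n-1}}$ and integrating in $\theta$ produces precisely $2i\sqrt\lambda[(g_+|h_+)_{S^{n-1}}-(g_-|h_-)_{S^{n-1}}]+O(1/R)$, which has the correct limit.

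The main obstacle will be showing that the cross/error terms in $I(R)$ involving $u_1,v_1\in\mathring{B}^*$ also drop out in the limit, since the $\mathring{B}^*$ characterization controls averages over balls rather than traces on individual spheres, and derivatives $\partial_r u_1,\partial_r v_1$ also appear in the integrand. I would handle this by a pigeonhole argument on radii. After checking that $(H_0-\lambda)u_1\in B$ (which reduces to a direct computation using the explicit form of $u_0$ and the cancellation of the leading $1/r$ terms in $(H_0-\lambda)(r^{-(n-1)/2}e^{\pm i\sqrt\lambda r}g_\pm(\theta))$), Proposition~\ref{free_resolvent_estimates} places $u_1,v_1\in B^*_2$, and the $\mathring{B}^*$ condition combined with elliptic control of $\nabla u_1,\nabla v_1$ forces $\tfrac{1}{R}\int_{|x|<R}(|u_1|^2+|\nabla u_1|^2)\,dx\to 0$. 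By the mean value principle applied on $\rho\in[R,2R]$, there is a sequence $R_j\to\infty$ along which
\begin{equation*}
\int_{|x|=R_j}\bigl(|u_1|^2+|\nabla u_1|^2+|v_1|^2+|\nabla v_1|^2\bigr)\,dS\longrightarrow 0,
\end{equation*}
and Cauchy--Schwarz combined with the pointwise bounds $|u_0|,|\partial_r u_0|\lesssim R^{-(n-1)/2}$ kills the cross contributions in $I(R_j)$. Since $\lim_{R\to\infty}I(R)$ is already known to exist from the first step, its value must agree with that of the main term, completing the proof.
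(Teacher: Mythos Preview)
Your overall strategy---Green's formula on balls and identification of the boundary term---is the same as the paper's, and your computation of the main contribution from $u_0,v_0$ is correct (the oscillatory $e^{\pm 2i\sqrt\lambda r}$ cross products and the $O(r^{-n})$ pieces do cancel in the Wronskian). The gap is in the treatment of the error terms.

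Your claim that $(H_0-\lambda)u_1\in B$ fails in general. Writing out $(-\Delta-\lambda)(\psi u_0)$ for large $r$ gives
\[
(-\Delta-\lambda)u_0=\frac{(n-1)(n-3)}{4r^2}\,u_0-\frac{1}{r^2}\,r^{-\frac{n-1}{2}}\bigl(e^{i\sqrt\lambda r}\Delta_{S^{n-1}}g_+ + e^{-i\sqrt\lambda r}\Delta_{S^{n-1}}g_-\bigr),
\]
and since $g_\pm$ are only assumed to lie in $L^2(S^{n-1})$, the term involving $\Delta_{S^{n-1}}g_\pm$ is not in $L^2_{\mathrm{loc}}$, let alone in $B$. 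Consequently you cannot invoke Proposition~\ref{free_resolvent_estimates} for $u_1$, $u_1$ need not lie in $B^*_2$, and the ``elliptic control'' you appeal to for $\nabla u_1\in\mathring{B}^*$ is not available. Without that, the pigeonhole step cannot kill the cross terms $\int_{|x|=R_j}u_0\,\overline{\partial_r v_1}\,dS$ and $\int_{|x|=R_j}\partial_r u_1\,\overline{v_0}\,dS$.

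The paper sidesteps this regularity obstruction by smoothing before integrating by parts: it uses the decomposition of Proposition~\ref{free_resolvent_estimates} to write $u=\mF^{-1}\{v_+\,dS_\lambda\}+R_0(\lambda-i0)f$, approximates $v_+$ by $v_{+j}\in C^\infty(M_\lambda)$ and $f$ by $f_j\in C^\infty_c$, and works with the resulting $u_j$. For these smooth data the remainder $w_j$ \emph{and its gradient} have pointwise decay, so the boundary integral on $\partial B(0,R)$ can be computed directly and the limit $R\to\infty$ taken at fixed $j$; one then lets $j\to\infty$ using the $B$, $B^*$, and $L^2(S^{n-1})$ convergences. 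Your pigeonhole idea could likely be salvaged by first establishing that $\partial_r u$ has the expected asymptotics modulo $\mathring{B}^*$ (so that $\partial_r u_1\in\mathring{B}^*$ directly), but as written the route through $(H_0-\lambda)u_1\in B$ does not go through.
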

\begin{proof}
Write $f = (H_0-\lambda)u$, and let $u = \mF^{-1}\{ v_+ \,dS_{\lambda} \} + R_0(\lambda-i0)f$ as in Proposition \ref{free_resolvent_estimates}. Choose $v_{+j} \in C^{\infty}(M_{\lambda})$ and $f_j \in C^{\infty}_c(\mR^n)$ with $v_{+j} \to v_+$ in $L^2(M_{\lambda})$ and $f_j \to f$ in $B$, and write $u_j = \mF^{-1}\{ v_{+j} \,dS_{\lambda} \} + R_0(\lambda-i0)f_j$. Then $u_j \to u$ in $B^*$ and $(H_0-\lambda)u_j \to (H_0-\lambda)u$ in $B$. By \eqref{free_resolvent_asymptotics} and \eqref{free_poisson_asymptotics} 
\begin{equation*}
u_j = \psi(r) r^{-\frac{n-1}{2}} \left[ e^{i\sqrt{\lambda} r} g_{+j}(\theta) + e^{-i\sqrt{\lambda} r} g_{-j}(\theta) \right] + w_j
\end{equation*}
where $g_{\pm j} \in C^{\infty}(S^{n-1})$ converge to $g_{\pm}$ in $L^2(S^{n-1})$, and $w_j$ are smooth functions with $\abs{w_j} \leq C r^{-\frac{n-3}{2}}$ converging to some $w$ in $\mathring{B}^*$. In fact, since $f_j$ and $g_{\pm j}$ are smooth one also has the estimate $\abs{\nabla w_j} \leq C r^{-\frac{n-3}{2}}$ (see \cite[Sections 1.3 and 1.7]{melrose}).

Performing a similar approximation for $v$, we integrate by parts in a ball $B(0,R)$ to obtain 
\begin{equation*}
(u_j|(H_0-\lambda)v_j)_{B(0,R)} - ((H_0-\lambda)u_j|v_j)_{B(0,R)} = \int_{\partial B(0,R)} \left( \frac{\partial u_j}{\partial \nu} \bar{v}_j - u_j \frac{\partial \bar{v}_j}{\partial \nu} \right) \,dS.
\end{equation*}
Here 
\begin{equation*}
\frac{\partial u_j}{\partial \nu}(r\theta) = i\sqrt{\lambda} \psi(r) r^{-\frac{n-1}{2}} \left[ e^{i\sqrt{\lambda} r} g_{+j}(\theta) - e^{-i\sqrt{\lambda} r} g_{-j}(\theta) \right] + \tilde{w}_j
\end{equation*}
with $\abs{\tilde{w}_j} \leq C r^{-\frac{n-3}{2}}$, and similarly for $v_j$. Inserting the asymptotics in the boundary term and letting $R \to \infty$ gives 
\begin{equation*}
(u_j|(H_0-\lambda)v_j) - ((H_0-\lambda)u_j|v_j) = 2i\sqrt{\lambda} \left[ (g_{+j}|h_{+j})_{S^{n-1}} - (g_{-j}|h_{-j})_{S^{n-1}}\right].
\end{equation*}
The result follows upon taking $j \to \infty$.
\end{proof}

The last result here concerns the density of the scattering solutions $P(\lambda)g$ in the set of all exponentially growing solutions, as in \cite{uhlmannvasy}. The result is valid for exponentially decaying coefficients.

\begin{prop} \label{prop:density}
Let $V(x,D)$ be a symmetric short range perturbation with 
\begin{equation*}
\abs{a_{\alpha}(x)} \leq C e^{-\gamma_0 \br{x}}, \quad \abs{\alpha} \leq 1,
\end{equation*}
for some $\gamma_0 > 0$. Let $0 < \gamma < \gamma_0$. Given any $w \in e^{\gamma \br{x}} L^2$ with $(H-\lambda)w = 0$, there exist $g_j \in L^2(M_{\lambda})$ such that $P(\lambda) g_j \to w$ in $e^{\gamma_0 \br{x}} L^2$.
\end{prop}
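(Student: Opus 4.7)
The plan is to invoke Hahn--Banach duality. With the pairing $(u|F) = \int_{\mR^n} u\bar F\,dx$, the continuous dual of $e^{\gamma_0\br{x}}L^2$ is $e^{-\gamma_0\br{x}}L^2$, so it suffices to show that whenever $F \in e^{-\gamma_0\br{x}}L^2$ satisfies $(P(\lambda)g\,|\,F) = 0$ for every $g \in L^2(M_\lambda)$, also $(w\,|\,F) = 0$.

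For such $F$, I would first solve $(H-\lambda)v = F$ via the outgoing resolvent: set $v = R(\lambda+i0)F = R_0(\lambda+i0)\tilde F$ with $\tilde F = (I+V(x,D)R_0(\lambda+i0))^{-1}F \in B$. Then $v \in B^*_2$, and Proposition~\ref{free_resolvent_estimates} gives the one-sided asymptotics $v \sim c_\lambda r^{-\frac{n-1}{2}}e^{i\sqrt\lambda r}\hat{\tilde F}(\sqrt\lambda\,\theta)$, i.e.\ $h_-^v = 0$ in the notation of Proposition~\ref{prop:boundary_pairing}. Since $V(x,D)$ is symmetric on $B^*_2$, the identity of Proposition~\ref{prop:boundary_pairing} extends verbatim from $H_0$ to $H$. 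Applying it to $u = P(\lambda)g$ and $v$ and using the asymptotics of Proposition~\ref{prop:poisson_scattering_matrix} to read off $g_+^u$, one gets
\[
0 = (P(\lambda)g\,|\,F) = 2i\sqrt\lambda\, c_\lambda\,( (\Sigma_\lambda g)(\sqrt\lambda\,\cdot\,) \,|\, h_+^v)_{S^{n-1}}.
\]
Unitarity of $\Sigma_\lambda$ on $L^2(M_\lambda)$ makes $g \mapsto (\Sigma_\lambda g)(\sqrt\lambda\,\cdot\,)$ onto $L^2(S^{n-1})$, so $h_+^v = 0$ and $\hat{\tilde F}$ vanishes on $M_\lambda$.

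The heart of the argument is then to upgrade this vanishing to exponential decay of $v$. Fix $\gamma'$ with $\gamma < \gamma' < \gamma_0$. The exponential decay of $F$ and of the coefficients of $V(x,D)$, combined with a Neumann/Fredholm argument on the weighted spaces $e^{-\gamma'\br{x}}L^2$ (exploiting that $V(x,D)R_0(\lambda+i0)$ inherits the decay of $V$), yields $\tilde F \in e^{-\gamma'\br{x}}L^2$. Hence $\hat{\tilde F}$ extends holomorphically to the tube $\{|\im\xi| < \gamma'\}$. Since $\hat{\tilde F}|_{M_\lambda} = 0$ the $\delta$-contribution of $(|\xi|^2-\lambda-i0)^{-1}$ is inert, and $v$ equals the principal-value inverse Fourier transform of $\hat{\tilde F}(\xi)/(|\xi|^2-\lambda)$. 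Deforming the contour to $\mR^n + i\gamma'\,x/|x|$ is legitimate because the complex zero set $\{|\xi|^2 = \lambda\}$ meets the deformation in real codimension two, so no residues appear, and it produces the factor $e^{-\gamma'|x|}$; hence $v \in e^{-\gamma'\br{x}}L^\infty$.

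With $v$ decaying at rate $\gamma'$ and $w$ growing at rate $\gamma < \gamma'$, every product in Green's identity for $H$ on $B(0,R)$ is integrable, and elliptic $H^2_{\mathrm{loc}}$ regularity of $w$ and $v$ allows one to bound boundary fluxes on $\partial B(0,R)$ by $O(e^{(\gamma-\gamma')R}) \to 0$ as $R \to \infty$. Thus
\[
(w\,|\,F) = (w\,|\,(H-\lambda)v) - ((H-\lambda)w\,|\,v) = 0,
\]
completing the Hahn--Banach step. The principal obstacle I foresee is the contour-shift / Paley--Wiener input extracting exponential decay of $v$ from the mere vanishing of $\hat{\tilde F}$ on the real sphere $M_\lambda$, and the associated verification that $I + V(x,D)R_0(\lambda+i0)$ is invertible on the exponentially weighted space; this is the technical substitute for the much softer Fredholm situation available for compactly supported potentials in \cite{uhlmannvasy}.
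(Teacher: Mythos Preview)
Your outline is correct and follows the same duality/boundary-pairing strategy as the paper, with two cosmetic differences and one point where your justification should be tightened.

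First, the paper takes $v = R(\lambda-i0)f$ rather than the outgoing resolvent. With the incoming choice one has $h_+ = 0$ immediately from \eqref{free_resolvent_asymptotics}, and the boundary pairing then kills $h_-$ simply because $g_-^u \propto g(-\sqrt{\lambda}\,\cdot\,)$ ranges over all of $L^2(S^{n-1})$; no appeal to the unitarity of $\Sigma_\lambda$ is needed. Your outgoing version is fine too, it just uses one extra fact.

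Second, and more substantively, your contour-shift step is where the paper invests its effort (Lemma~\ref{resolvent_exponential}). The phrase ``the complex zero set meets the deformation in real codimension two, so no residues appear'' is not the correct reason the shift is legitimate: a simple pole along a complex hypersurface is not integrable and does contribute under a naive deformation. What actually happens is that the vanishing of $\hat{\tilde F}$ on the real sphere $M_\lambda$ propagates, by analytic continuation on the connected complex manifold $\{z\cdot z=\lambda\}$, to vanishing on the whole complex sphere inside the tube; hence $\hat{\tilde F}(z)/(z\cdot z-\lambda)$ has a \emph{removable} singularity and extends analytically. The paper then reads off $v\in e^{-\gamma'\br{x}}H^2$ directly from Paley--Wiener, which also streamlines your final step: instead of estimating boundary fluxes, one approximates $v$ in $e^{-\gamma\br{x}}H^2$ by $C^\infty_c$ functions and moves $H-\lambda$ across by a limit. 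Your boundary-flux conclusion would work as well once you have the $H^2$ decay, but the approximation argument is shorter.
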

\begin{proof}
Suppose $f \in e^{-\gamma_0 \br{x}} L^2$ and 
\begin{equation*}
(u|f) = 0
\end{equation*}
for any $u = P(\lambda)g$ where $g \in L^2(M_{\lambda})$. Let $w \in e^{\gamma \br{x}} L^2$ with $(H-\lambda)w = 0$. We need to show that $(w|f) = 0$, which will imply that $w$ is in the closure of the subspace $\{P(\lambda)g \,;\, g \in L^2(M_{\lambda}) \}$ of $e^{\gamma_0\br{x}} L^2$ as required.

Write $v = R(\lambda-i0)f$, so that $v = R_0(\lambda-i0)f'$ where $f'$ is the solution of $(I+VR_0(\lambda-i0))f' = f$. The operator $V R_0(\lambda-i0)$ maps $e^{-\gamma' \br{x}}L^2$ compactly into itself for any $\gamma' < \gamma_0$, which shows that $f' \in e^{-\gamma'\br{x}} L^2$ for all such $\gamma'$.

For $g \in L^2(M_{\lambda})$, let $u = P(\lambda)g$. Then 
\begin{equation*}
0 = (u|f) = (u|(H-\lambda)v) = (u|(H-\lambda)v) - ((H-\lambda)u|v).
\end{equation*}
Since $V$ is symmetric we may replace $H$ by $H_0$ in the last part, and then Proposition \ref{prop:boundary_pairing} implies 
\begin{equation*}
(g_+|h_+)_{S^{n-1}} - (g_-|h_-)_{S^{n-1}} = 0
\end{equation*}
if $g_{\pm}$ and $h_{\pm}$ are as in \eqref{bp_u_asymptotic} and \eqref{bp_v_asymptotic}. But $h_+ = 0$ by \eqref{free_resolvent_asymptotics}, and since $g$ was arbitrary we obtain from \eqref{perturbed_poisson_asymptotics} and \eqref{free_resolvent_asymptotics} that $h_- = \widehat{f'}(-\sqrt{\lambda} \,\cdot\,) = 0$. Now Lemma \ref{resolvent_exponential} below shows that $v \in e^{-\gamma' \br{x}} H^2$ for any $\gamma' < \gamma_0$.

Choose $v_j \in C^{\infty}_c(\mR^n)$ with $v_j \to v$ in $e^{-\gamma \br{x}} H^2$. Then 
\begin{equation*}
(w|f) = (w|(H-\lambda)v) = \lim_{j \to \infty} (w|(H-\lambda)v_j) = \lim_{j \to \infty} ((H-\lambda)w|v_j) = 0
\end{equation*}
as required.
\end{proof}

\begin{lemma} \label{resolvent_exponential}
If $f \in e^{-\gamma \br{x}} L^2$ for $\gamma < \gamma_0$, and if $\hat{f}|_{M_{\lambda}} = 0$, then one has $R_0(\lambda \pm i0) f \in e^{-\gamma \br{x}} H^2$ for any $\gamma < \gamma_0$.
\end{lemma}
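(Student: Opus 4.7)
The plan is a Paley-Wiener / Fourier-Laplace argument: extend $\hat{f}$ holomorphically, use the vanishing $\hat{f}|_{M_{\lambda}} = 0$ together with analyticity to kill the pole of the free-resolvent symbol $(|\xi|^2 - \lambda)^{-1}$ on the whole complexified characteristic set, and invert to get exponential decay of $v := R_0(\lambda \pm i0)f$.

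First, since $e^{\eta \cdot x} f \in L^2(\mR^n)$ for every $|\eta| < \gamma$, Plancherel makes $\hat{f}$ into a holomorphic function on the tube $T_{\gamma} = \{\zeta = \xi + i\eta \in \mC^n : |\eta| < \gamma\}$ with $\|\hat{f}(\,\cdot\, + i\eta)\|_{L^2(\mR^n_\xi)}$ uniformly bounded for $|\eta| \leq \gamma' < \gamma$. Write $p(\zeta) = \zeta\cdot\zeta - \lambda$ and $X = \{\zeta \in T_{\gamma} : p(\zeta) = 0\}$, a smooth complex hypersurface of complex dimension $n-1$ meeting $\mR^n$ exactly in $M_{\lambda}$. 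The key observation I would use is that $M_{\lambda}$ is a \emph{maximally totally real} submanifold of $X$: the real tangent space $T_{\xi_0}M_{\lambda} = \xi_0^{\perp}$ is a real form of $T_{\xi_0}X = \{v \in \mC^n : v\cdot\xi_0 = 0\}$, and $\dim_{\mR} M_{\lambda} = n-1 = \dim_{\mC} X$. Since $X$ is connected when $n \geq 3$ (a smooth irreducible affine quadric of complex dimension $\geq 2$), the identity theorem forces $\hat{f}|_X \equiv 0$, and because $p$ has simple zeros along $X$, local holomorphic division makes $Q(\zeta) := \hat{f}(\zeta)/p(\zeta)$ holomorphic on $T_{\gamma}$.

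The remaining work is a quantitative Paley-Wiener conversion. For every $|\alpha| \leq 2$ I would bound $\|\zeta^{\alpha} Q(\,\cdot\, + i\eta)\|_{L^2(\mR^n_\xi)}$ uniformly for $|\eta| \leq \gamma' < \gamma$: on $\{|\xi| > R\}$ for $R$ large the denominator dominates by $|\xi|^2$, reducing the estimate to the Plancherel bound on $\hat{f}(\,\cdot\, + i\eta)$; on the bounded complementary region $Q$ is holomorphic, hence pointwise controlled by interior Cauchy estimates in terms of $L^2$ norms of $\hat{f}$ on nearby slices of $T_{\gamma}$. Converting slice-wise back by Plancherel gives $e^{\eta\cdot x} \partial^{\alpha} v \in L^2(\mR^n)$ uniformly in $|\eta| \leq \gamma'$, where $v := \mF^{-1}Q|_{\mR^n}$ coincides with $R_0(\lambda \pm i0)f$ by Sokhotski-Plemelj: $(|\xi|^2 - \lambda \mp i0)^{-1} = \mathrm{p.v.}\,(|\xi|^2 - \lambda)^{-1} \pm i\pi \delta_{M_{\lambda}}$, and the $\delta$-term is annihilated by $\hat{f}|_{M_{\lambda}} = 0$. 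Finally, the directional bounds $e^{\eta\cdot x} \partial^{\alpha} v \in L^2$ (uniform in $|\eta| \leq \gamma'$) are promoted to $e^{\gamma''\br{x}} \partial^{\alpha} v \in L^2$ for any $\gamma'' < \gamma$ by a standard cone-covering argument, yielding $v \in e^{-\gamma''\br{x}} H^2$. The main obstacle is the analytic-continuation step in the preceding paragraph: upgrading the vanishing from the real sphere to the complex quadric requires identifying $M_{\lambda}$ as maximally totally real in $X$ and uses the dimensional hypothesis $n \geq 3$ through connectedness of $X$; once this is in hand the Paley-Wiener and cone-covering pieces are routine.
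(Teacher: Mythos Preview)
Your proposal is correct and follows essentially the same route as the paper: Paley--Wiener extension of $\hat{f}$ to the tube, propagation of the vanishing from the real sphere $M_\lambda$ to the complexified quadric $\{z\cdot z=\lambda\}$, holomorphic division by $z\cdot z-\lambda$, and reverse Paley--Wiener for the quotient. The only differences are packaging: you invoke the maximally-totally-real identity theorem where the paper writes out the local biholomorphism $(z',z_n)\mapsto(z',z\cdot z-\lambda)$ by hand, and your final cone-covering step is already absorbed into the Paley--Wiener theorem (Reed--Simon IX.13) the paper cites directly; note also that connectedness of the tube-restricted quadric holds already for $n\geq 2$ (retract $\mathrm{Im}\,\zeta$ to zero along the quadric), so your appeal to $n\geq 3$ there is not needed.
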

\begin{proof}
Let $U = \{ z \in \mC^n \,;\, \abs{ \im\,z} < \gamma_0 \}$. Then the Paley-Wiener theorem for exponentially decaying functions, \cite[Theorem IX.13]{reedsimon}, shows that $\hat{f}$ extends as an analytic function in $U$ with 
\begin{equation} \label{f_exponential_bounds}
\sup_{\abs{\eta} \leq \gamma} \norm{\hat{f}(\,\cdot\,+i\eta)}_{L^2} < \infty \qquad \text{for all } \gamma < \gamma_0.
\end{equation}
Define $M_{\lambda}^{\mC} = \{ z \in U \,;\, z \cdot z = \lambda \}$. This is a complex submanifold of $U$ of complex codimension one, and since $\hat{f}$ vanishes on the real zeros of $z \cdot z - \lambda$ we have 
\begin{equation*}
\hat{f}(z) = (z \cdot z - \lambda) g(z), \quad z \in U,
\end{equation*}
for some function $g$ analytic in $U$.

To see the last claim, let $z^0$ be any point in $M_{\lambda}^{\mC} \cap \mR^n$. Then some coordinate, say $z_n^0$, is nonzero and the map $\varphi(z',z_n) = (z',z \cdot z - \lambda)$ is a biholomorphic diffeomorphism defined on a neighborhood $W$ of $z^0$ in $U$ by the inverse function theorem. Since $\hat{f} \circ \varphi^{-1}$ vanishes on all real points $(\xi',0)$ in $\varphi(W)$, it vanishes on all points $(z',0)$ in $\varphi(W)$ and thus $\hat{f}$ vanishes on $M_{\lambda}^{\mC} \cap W$. Now $\hat{f}$ is analytic on the connected manifold $M_{\lambda}^{\mC}$, so it vanishes on this manifold. Using a corresponding biholomorphic map $\varphi$ near any point of $M_{\lambda}^{\mC}$ shows that $\hat{f}/(z \cdot z - \lambda)$ is locally bounded in $U \smallsetminus M_{\lambda}^{\mC}$, so the required function $g$ exists by the removable singularities theorem in several complex variables.

Let $\abs{\alpha} \leq 2$. The fact that $\hat{f}(\xi)$ vanishes on $M_{\lambda}$ implies 
\begin{equation*}
(D^{\alpha} R_0(\lambda \pm i0) f)\ehat(\xi) = \frac{\xi^{\alpha}}{\xi \cdot \xi - \lambda} \hat{f}(\xi), \quad \xi \in \mR^n.
\end{equation*}
We define 
\begin{equation*}
h_{\alpha}(z) = z^{\alpha} g(z) =  \frac{z^{\alpha}}{z \cdot z - \lambda} \hat{f}(z), \quad z \in U.
\end{equation*}
This is an analytic function in $U$, hence bounded on compact subsets of $U$, and if $\abs{\im\,z} \leq \gamma < \gamma_0$ then $z^{\alpha}/(z \cdot z - \lambda)$ is bounded for $\abs{\re\,z}$ large. By the estimates \eqref{f_exponential_bounds} we see $h_{\alpha}$ satisfies similar estimates. The result now follows from \cite[Theorem IX.13]{reedsimon}.
\end{proof}

\subsection{Magnetic Schr\"odinger operator}

We now specialize to the case of the magnetic Schr\"odinger operator with exponentially decaying coefficients. More precisely, suppose that $A$ and $V$ satisfy \eqref{av_assumptions}, and let $H$ be the corresponding magnetic Schr\"odinger operator. Clearly $H = H_0 + V(x,D)$ where 
\begin{equation*}
V(x,D) = 2 A \cdot D + \tilde{V}, \qquad \tilde{V}(x) = A^2 + D \cdot A + V(x).
\end{equation*}
Thus $V(x,D)$ is a symmetric short range perturbation satisfying \eqref{ucp_condition}, so $H$ has no positive eigenvalues and the resolvent $R(\lambda \pm i0)$ is well defined for all $\lambda > 0$.

Next we show that the scattering matrix for $H$ is preserved under gauge transformations.

\begin{lemma} \label{lemma:gaugeinvariance}
If $\alpha \in e^{-\gamma_0 \br{x}} W^{2,\infty}(\mR^n ; \mR)$, then the scattering matrices for the coefficients $(A,V)$ and $(A+\nabla \alpha,V)$ are equal.
\end{lemma}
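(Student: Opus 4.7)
The plan is to exhibit an explicit gauge transformation at the level of solutions: multiplication by the unitary function $e^{-i\alpha}$ intertwines the two magnetic Hamiltonians, and the exponential decay of $\alpha$ will guarantee that scattering asymptotics are preserved. Concretely, a short calculation using $D_j(e^{-i\alpha} u) = e^{-i\alpha}(D_j - \partial_j \alpha)u$ gives the operator identity
\begin{equation*}
(D_j + A_j + \partial_j \alpha)\, e^{-i\alpha} = e^{-i\alpha}(D_j + A_j),
\end{equation*}
and squaring and summing yields $H' e^{-i\alpha} = e^{-i\alpha} H$, where $H$ and $H'$ are the magnetic Schr\"odinger operators with coefficients $(A,V)$ and $(A+\nabla\alpha, V)$.

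Next, I would take an arbitrary incoming datum $g \in L^2(M_{\lambda})$, set $u = P(\lambda)g$ (the Poisson solution for $H$), and consider $u' := e^{-i\alpha} u$. Since $\alpha \in e^{-\gamma_0 \br{x}} W^{2,\infty}$, multiplication by $e^{-i\alpha}$ preserves $B_2^*$, so $u' \in B_2^*$, and the intertwining identity gives $(H'-\lambda) u' = 0$. To compare the asymptotics of $u'$ and $u$, I would write $u' - u = (e^{-i\alpha} - 1)u$ and note that $|e^{-i\alpha} - 1| \leq C e^{-\gamma_0 \br{x}}$; combined with $u \in B_2^* \subseteq L^2_{-\delta}$ for any $\delta > 1/2$, this forces $(e^{-i\alpha} - 1)u \in L^2(\mR^n) \subseteq \mathring{B}^*$, so $u' \sim u$ in the sense of the asymptotic relation. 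Hence by Proposition \ref{prop:poisson_scattering_matrix},
\begin{equation*}
u' \sim c_{\lambda} r^{-\frac{n-1}{2}}\bigl[ e^{i\sqrt{\lambda}r}(\Sigma_{\lambda} g)(\sqrt{\lambda}\theta) + i^{n-1} e^{-i\sqrt{\lambda}r} g(-\sqrt{\lambda}\theta)\bigr].
\end{equation*}

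To conclude, I would compare $u'$ with $P'(\lambda) g$, the Poisson solution for $H'$ with the same incoming data. Their difference $u' - P'(\lambda) g$ lies in $B_2^*$, solves $(H'-\lambda) w = 0$, and by Proposition \ref{prop:poisson_scattering_matrix} applied to $P'(\lambda) g$, its asymptotics are purely outgoing with amplitude $(\Sigma_{\lambda} - \Sigma_{\lambda}')g$ on $M_{\lambda}$. Applying Proposition \ref{prop:boundary_pairing} to $w$ paired with itself (with $(H_0-\lambda)w = -V'(x,D) w \in B$ by the short range property), or equivalently invoking Rellich-type uniqueness in $B^*$ for the perturbed operator via the limiting absorption principle of \cite[Section 14.5]{H2}, forces the outgoing amplitude to vanish. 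Thus $\Sigma_{\lambda} g = \Sigma_{\lambda}' g$ for every $g \in L^2(M_{\lambda})$, which is the claim.

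The only delicate step is the final uniqueness assertion, i.e.\ that a $B_2^*$ solution of $(H'-\lambda)w = 0$ with no incoming radiation must vanish. This is where one uses the absence of positive eigenvalues (guaranteed by \eqref{ucp_condition} together with unique continuation at infinity, already invoked earlier in the preliminaries) and the Agmon-H\"ormander framework. Everything else is algebraic manipulation using the intertwining relation and the exponential decay of the gauge $\alpha$, which makes the multiplicative perturbation $e^{-i\alpha} - 1$ negligible at infinity in the $\mathring{B}^*$ sense.
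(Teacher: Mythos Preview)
Your proposal is correct and follows essentially the same route as the paper: both use the intertwining identity $H' = e^{-i\alpha} H e^{i\alpha}$, take $u' = e^{-i\alpha} P(\lambda) g$, observe that $(e^{-i\alpha}-1)P(\lambda)g \in \mathring{B}^*$ so $u'$ inherits the asymptotics of $P(\lambda)g$, and then compare $u'$ with $P'(\lambda)g$ to obtain a solution of $(H'-\lambda)w=0$ with purely outgoing asymptotics $(\Sigma_\lambda - \Sigma'_\lambda)g$. The only cosmetic difference is in the last step: the paper simply cites \cite[Lemma~14.6.6]{H2} to kill the outgoing amplitude, whereas you carry this out explicitly via Proposition~\ref{prop:boundary_pairing} applied to $w$ against itself (using symmetry of $V'(x,D)$ on $B_2^*$ to make the left side vanish, forcing $\|g_+\|^2=0$)---which is in effect how that lemma is proved.
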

\begin{proof}
We will use Proposition \ref{prop:poisson_scattering_matrix}. Writing $H'$ for the Schr\"odinger operator with coefficients $(A+\nabla \alpha, V)$, we have 
\begin{equation*}
H' = e^{-i\alpha} H e^{i\alpha}.
\end{equation*}
Then $u' = e^{-i\alpha} P(\lambda) g$ with $g \in L^2(M_{\lambda})$ solves $(H'-\lambda)u' = 0$, and by \eqref{perturbed_poisson_asymptotics}
\begin{align*}
u' &= P(\lambda)g + (e^{-i\alpha}-1)P(\lambda)g \\
 &\sim c_{\lambda} r^{-\frac{n-1}{2}} \left[ e^{i\sqrt{\lambda} r} (\Sigma_{\lambda} g)(\sqrt{\lambda} \theta) + i^{n-1} e^{-i\sqrt{\lambda} r} g(-\sqrt{\lambda}\theta) \right].
\end{align*}
We have $(H'-\lambda)(u'-P'(\lambda)g) = 0$, and by \eqref{perturbed_poisson_asymptotics} again 
\begin{equation*}
u' - P'(\lambda)g \sim c_{\lambda} r^{-\frac{n-1}{2}} e^{i\sqrt{\lambda} r} \left[ (\Sigma_{\lambda} g)(\sqrt{\lambda} \theta) - (\Sigma_{\lambda}' g)(\sqrt{\lambda} \theta) \right].
\end{equation*}
Then \cite[Lemma 14.6.6]{H2} and \eqref{free_resolvent_asymptotics}, \eqref{free_poisson_asymptotics} show that $\Sigma_{\lambda} g = \Sigma_{\lambda}' g$.
\end{proof}

Let now $H'$ be another magnetic Schr\"odinger operator with coefficients $A'$ and $V'$ satisfying \eqref{av_assumptions}. The following orthogonality identity for exponentially growing solutions will be used in recovering the coefficients.

\begin{lemma} \label{lemma:boundarypairing}
If $\Sigma_{\lambda} = \Sigma_{\lambda}'$, then 
\begin{equation*}
((2A \cdot D + \tilde{V})w|w') - (w|(2A' \cdot D + \tilde{V}')w') = 0
\end{equation*}
for all $w, w' \in e^{\gamma \br{x}} H^1$ with $(H-\lambda)w = 0$, $(H'-\lambda)w' = 0$, and $\gamma < \frac{\gamma_0}{2}$. Further, if $A = A'$, then 
\begin{equation*}
(Vw|w') - (w|V'w') = 0
\end{equation*}
for all such $w$, $w'$.
\end{lemma}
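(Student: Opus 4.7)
The strategy is to verify the identity first for scattering solutions via the boundary pairing of Proposition~\ref{prop:boundary_pairing}, and then extend to general $w,w'$ by the density statement of Proposition~\ref{prop:density}. Write $V_1 = 2A\cdot D + \tilde V$, $V_1' = 2A'\cdot D + \tilde V'$, and set
\[
Q(u,u') := (V_1 u \,|\, u') - (u \,|\, V_1' u').
\]
For scattering solutions $u = P(\lambda)g$ and $u' = P'(\lambda)g'$, both $u$ and $u'$ lie in $B^*_2$; since $V_1$ and $V_1'$ are short range, $(H_0-\lambda)u = -V_1 u \in B$ and $(H_0-\lambda)u' = -V_1' u' \in B$, so that $Q(u,u') = (u\,|\,(H_0-\lambda)u') - ((H_0-\lambda)u\,|\,u')$ falls under Proposition~\ref{prop:boundary_pairing}. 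Reading off the asymptotic coefficients via Proposition~\ref{prop:poisson_scattering_matrix} gives $g_+ = c_\lambda (\Sigma_\lambda g)(\sqrt\lambda\,\cdot)$, $g_- = c_\lambda i^{n-1} g(-\sqrt\lambda\,\cdot)$, and analogously $h_\pm$ in terms of $\Sigma_\lambda'$ and $g'$. When $\Sigma_\lambda = \Sigma_\lambda'$, the unitarity of $\Sigma_\lambda$ on $L^2(M_\lambda)$ yields $(g_+|h_+)_{S^{n-1}} = |c_\lambda|^2 (g|g')_{L^2(M_\lambda)}$, which after the change of variables $\theta\mapsto-\theta$ equals $(g_-|h_-)_{S^{n-1}}$; hence $Q(u,u')=0$.

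The density step turns on selecting a topology in which $Q$ is continuous. Proposition~\ref{prop:density} as stated only gives convergence in $e^{\gamma_0\br{x}} L^2$, which is too weak, because $A\cdot D$ is a first-order operator and the pairing against the $e^{-\gamma_0\br{x}}$-decaying coefficients requires the combined growth of $u,u'$ to stay strictly below $\gamma_0$. The hypothesis $\gamma < \gamma_0/2$ allows us to pick $\gamma_0^\ast \in (\gamma,\gamma_0/2)$; since the coefficients trivially satisfy the weaker decay $|a_\alpha| \leq C e^{-\gamma_0^\ast\br{x}}$, applying Proposition~\ref{prop:density} with rate $\gamma_0^\ast$ produces $u_j = P(\lambda) g_j \to w$ and $u_j' = P'(\lambda) g_j' \to w'$ in $e^{\gamma_0^\ast\br{x}} L^2$. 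Standard interior $H^2$ elliptic regularity for $H-\lambda$, applied to $u_j - w$ on unit balls and summed against the slowly varying weight $e^{-\gamma_0^\ast\br{x}}$, upgrades this to $e^{\gamma_0^\ast\br{x}} H^1$ convergence. Since $2\gamma_0^\ast < \gamma_0$, Cauchy--Schwarz together with $|A|, |\tilde V| \leq Ce^{-\gamma_0\br{x}}$ makes $Q$ continuous on $e^{\gamma_0^\ast\br{x}} H^1 \times e^{\gamma_0^\ast\br{x}} H^1$, and passing to the limit in $Q(u_j,u_j')=0$ gives the first identity. This $L^2$-to-$H^1$ upgrade, coupled with the gap condition $2\gamma_0^\ast < \gamma_0$, is the main technical obstacle.

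For the second identity, assume $A=A'$. Because $2\gamma < \gamma_0$ and $A \in e^{-\gamma_0\br{x}} W^{1,\infty}$, boundary terms on $\partial B_R$ decay exponentially in $R$ and integration by parts is legitimate, yielding
\[
(A\cdot Dw \,|\, w') - (w\,|\, A\cdot Dw') = i((\mdiv A)w\,|\,w').
\]
A direct computation using $\tilde V = A^2 - i\,\mdiv A + V$, $\tilde V' = A^2 - i\,\mdiv A + V'$ together with the reality of $A,V,V'$ gives $\overline{\tilde V'} - \tilde V = 2i\,\mdiv A + V' - V$, so that $(\tilde V w|w') - (w|\tilde V' w') = -2i((\mdiv A)w|w') + (Vw|w') - (w|V'w')$. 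Combined with twice the integration-by-parts identity above, the $\mdiv A$ contributions cancel in $Q(w,w')$ and leave $Q(w,w') = (Vw|w') - (w|V'w')$, which vanishes by the first identity already established.
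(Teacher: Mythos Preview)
Your proof is correct and follows the same two-step strategy as the paper: establish the identity for scattering solutions via the boundary pairing of Proposition~\ref{prop:boundary_pairing} and the asymptotics of Proposition~\ref{prop:poisson_scattering_matrix}, then pass to general $w,w'$ by Proposition~\ref{prop:density}.

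Two implementation details differ. In the density step, the paper does not upgrade to $H^1$ convergence; instead it notes that $(2A\cdot D+\tilde V)u_j \to (2A\cdot D+\tilde V)w$ in $e^{-\gamma\br{x}}H^{-1}$ and takes the limits in $j$ and $j'$ one at a time, using that the scattering solutions $u_j' \in B_2^* \subset e^{\gamma\br{x}}H^1$ and that $w,w'\in e^{\gamma\br{x}}H^1$ by hypothesis, so each pairing makes sense at every stage. Your interior elliptic regularity upgrade to $e^{\gamma_0^\ast\br{x}}H^1$ is a valid alternative and makes the continuity of $Q$ entirely explicit. For the second identity, the paper integrates by parts already at the level of scattering solutions $u,u'$ (where the exponential decay of $A$ kills the boundary term) and only then passes to the limit, whereas you integrate by parts directly for $w,w'$ and track the $\mdiv A$ cancellation by hand; both routes are fine.
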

\begin{proof}
Let $g, g' \in L^2(M_{\lambda})$ and choose $u = P(\lambda)g$ and $u' = P'(\lambda)g'$ where $g' = (\Sigma_{\lambda}')^* \tilde{g}'$. The functions $u$ and $u'$ satisfy the conditions of Proposition \ref{prop:boundary_pairing}, and we obtain by \eqref{perturbed_poisson_asymptotics} and the unitarity of $\Sigma_{\lambda}'$ that 
\begin{equation*}
(V(x,D)u|u') - (u|V'(x,D)u') = 2i c_{\lambda}^2 \sqrt{\lambda} ((\Sigma_{\lambda} - \Sigma_{\lambda}')g(\sqrt{\lambda} \,\cdot\,)|\tilde{g}'(\sqrt{\lambda} \,\cdot\,))_{S^{n-1}}.
\end{equation*}
Therefore 
\begin{equation} \label{u_scattering_orthogonality}
((2A \cdot D + \tilde{V})u|u') - (u|(2A' \cdot D + \tilde{V}')u') = 0.
\end{equation}
The density result in Proposition \ref{prop:density} implies that we can find scattering solutions $u_j$ and $u_j'$ so that $u_j \to w$ and $u_j' \to w'$ in $e^{\gamma \br{x}} L^2$. Then by \eqref{av_assumptions} $(2 A \cdot D + \tilde{V}) u_j \to (2A \cdot D + \tilde{V}) w$ in $e^{-\gamma \br{x}} H^{-1}$ and similarly for $w'$, and we obtain the first identity by taking suitable limits in \eqref{u_scattering_orthogonality}.

If $A = A'$, \eqref{u_scattering_orthogonality} gives upon integrating by parts in a large ball and using the exponential decay of $A$ that 
\begin{equation*}
(Vu|u') - (u|V'u') = 0
\end{equation*}
for all scattering solutions $u$ and $u'$. Approximation yields the same identity for $w$ and $w'$.
\end{proof}

Finally, we record a result which will guarantee the existence of an appropriate gauge transformation.

\begin{lemma} \label{lemma:gauge_decay}
Let $A \in e^{-\gamma\br{x}} W^{1,\infty}(\mR^n ; \mR^n)$ with $dA = 0$. Then $A = \nabla \alpha$ for some $\alpha \in e^{-\gamma\br{x}} W^{2,\infty}(\mR^n ; \mR)$.
\end{lemma}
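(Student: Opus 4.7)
The plan is to build $\alpha$ by combining the usual Poincar\'e lemma with an explicit integral of $A$ along rays from infinity, so that the exponential decay of $A$ is transferred directly to $\alpha$.

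First, since $\mR^n$ is simply connected and $dA = 0$ holds weakly (with $A \in W^{1,\infty}$), the standard Poincar\'e lemma produces a global primitive $\tilde\alpha \in W^{2,\infty}_{\mathrm{loc}}(\mR^n;\mR)$ with $\nabla\tilde\alpha = A$; concretely, $\tilde\alpha(x) = \int_0^1 x \cdot A(tx)\,dt$ works. This primitive is unique modulo an additive constant, but the naive bound on $\tilde\alpha$ is only $O(1)$ at infinity, so the actual work is to fix the constant and to extract the right decay.

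The key step is to introduce a second candidate primitive
\[
\alpha_\infty(x) = -\int_1^\infty x \cdot A(tx)\,dt, \qquad x \neq 0,
\]
which converges absolutely because $\abs{A(tx)} \leq C e^{-\gamma\br{tx}}$. To verify $\nabla\alpha_\infty = A$ on $\mR^n \setminus \{0\}$ I differentiate under the integral and use the closedness $\partial_j A_k = \partial_k A_j$ to rewrite
\[
\partial_j\bigl[x \cdot A(tx)\bigr] = A_j(tx) + t \sum_k x_k \partial_k A_j(tx) = \frac{d}{dt}\bigl[ t A_j(tx) \bigr];
\]
the $t$-integral then telescopes, with the boundary term at $t = \infty$ vanishing by the exponential decay, yielding $\partial_j\alpha_\infty(x) = A_j(x)$. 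Since $\mR^n \setminus \{0\}$ is connected for $n \geq 2$, $\tilde\alpha - \alpha_\infty$ is a constant there, so after subtracting that constant from $\tilde\alpha$ I may assume $\tilde\alpha = \alpha_\infty$ on $\mR^n\setminus\{0\}$ while $\tilde\alpha$ remains globally $C^1$.

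Finally I estimate $\alpha_\infty$ pointwise. The substitution $s = t\abs{x}$ with $\omega = x/\abs{x}$ gives
\[
\abs{\alpha_\infty(x)} \leq \int_{\abs{x}}^\infty \abs{A(s\omega)}\,ds \leq C \int_{\abs{x}}^\infty e^{-\gamma s}\,ds = \frac{C}{\gamma}\, e^{-\gamma\abs{x}},
\]
and since $\br{x} \leq \abs{x} + 1$ this is bounded by $C' e^{-\gamma\br{x}}$; by continuity the same bound holds at $x = 0$. Since $\nabla\tilde\alpha = A$ and $\nabla^2\tilde\alpha = \nabla A$ already lie in $e^{-\gamma\br{x}} L^\infty$ by \eqref{av_assumptions}, this gives $\tilde\alpha \in e^{-\gamma\br{x}} W^{2,\infty}(\mR^n;\mR)$. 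The only mildly delicate point is justifying the differentiation under the integral and the telescoping identity when $A$ has only $W^{1,\infty}$ regularity, but this reduces to dominated convergence combined with the pointwise a.e.\ form of $dA = 0$; the rest is essentially book-keeping around the Poincar\'e lemma and the removable singularity at the origin.
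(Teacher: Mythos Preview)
Your proof is correct and follows essentially the same route as the paper's: define the Poincar\'e primitive $\tilde\alpha(x)=\int_0^1 x\cdot A(tx)\,dt$, adjust by a constant so that it can be rewritten as the tail integral $-\int_{|x|}^\infty \omega\cdot A(s\omega)\,ds$, and read off the exponential decay from that representation. The only cosmetic difference is in how the constant is identified: the paper shows via Stokes' theorem that $\lim_{r\to\infty}\tilde\alpha(r\omega)$ is independent of $\omega$ and subtracts it, whereas you introduce $\alpha_\infty$ on $\mR^n\setminus\{0\}$, verify directly that it is a primitive, and use connectedness of the punctured space to match constants---both arguments land on the same formula and the same estimate.
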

\begin{proof}
We define 
\begin{equation*}
\alpha(x) = \int_0^1 x \cdot A(tx) \,dt.
\end{equation*}
Then $\alpha$ is Lipschitz, and $\nabla \alpha = A$ follows by a direct computation using the fact that $dA = 0$. If $r > 0$ and $\omega \in S^{n-1}$, we have 
\begin{equation*}
\alpha(r\omega) = \int_0^r \omega \cdot A(t\omega) \,dt = \int_{[0,r\omega]} A
\end{equation*}
where the last integral is the integral of a $1$-form over the line segment from $0$ to $r\omega$. Since $dA = 0$, such an integral over a closed curve vanishes by the Stokes theorem, and we have 
\begin{equation*}
\lim_{r \to \infty} (\alpha(r\omega_1) - \alpha(r\omega_2)) = \lim_{r \to \infty} \int_{[r\omega_2,r\omega_1]} A = 0
\end{equation*}
for any $\omega_1,\omega_2 \in S^{n-1}$. Therefore, $\lim_{r \to \infty} \alpha(r\omega)$ is a constant independent of $\omega$, and by substracting a constant from $\alpha$ we may assume that this limit is $0$. We may now write 
\begin{equation*}
\alpha(r\omega) = -\int_r^{\infty} \omega \cdot A(t\omega) \,dt,
\end{equation*}
and an easy computation gives that $\alpha \in e^{-\gamma\br{x}} W^{2,\infty}$.
\end{proof}

\section{Complex geometrical optics solutions} \label{sec:cgo_solutions}

Instead of the scattering solutions to $(H-\lambda)u = 0$ considered in the previous section, we want use use solutions with "complex frequency" to recover the coefficients from the scattering matrix. These will be the complex geometrical optics solutions introduced by Sylvester-Uhlmann \cite{sylvesteruhlmann}, based on earlier work of Calder{\'o}n \cite{calderon}.

In this section we assume that $A$ and $V$ satisfy 
\begin{eqnarray}
 & A \in \br{x}^{-1-\varepsilon} C_b(\mR^n ; \mC^n), \quad \nabla \cdot A \in \br{x}^{-1} L^n, & \label{w_cgo_assumption} \\
 & V \in \br{x}^{-1} L^n(\mR^n ; \mC), & \label{v_cgo_assumption}
\end{eqnarray}
for some $\varepsilon > 0$ (we write $C_b$ for the bounded continuous functions). For the main result we also assume 
\begin{equation} \label{ltwo_cgo_assumption}
\norm{\br{x} A}_{L^2} + \norm{\br{x} \tilde{V}}_{L^2} < \infty.
\end{equation}
Here and below we will write 
\begin{equation*}
\tilde{V} = A^2 + D \cdot A + V.
\end{equation*}
We consider solutions to $(H-\lambda)u = 0$ of the form 
\begin{equation} \label{urho_def}
u_{\rho} = e^{i\rho \cdot x}(1+v_{\rho})
\end{equation}
where $\rho \in \mC^n$ satisfies $\rho \cdot \rho = \lambda$, and $v_{\rho} \in H^1_{\delta}$ where $-1 < \delta < 0$. The main point is that when $\abs{\rho}$ is sufficiently large, these complex geometrical optics solutions exist and the asymptotic behaviour of $v_{\rho}$ as $\abs{\rho} \to \infty$ is known.

We introduce some notation. Consider the conjugated operator 
\begin{equation} \label{conjugated_operator}
e^{-i\rho \cdot x} (H-\lambda) e^{i\rho \cdot x} = P_{\rho} + 2A \cdot D_{\rho} + \tilde{V}
\end{equation}
where 
\begin{equation*}
P_{\rho} = -\Delta + 2\rho \cdot D, \qquad D_{\rho} = D + \rho.
\end{equation*}
The operator $P_{\rho}$ has a right inverse $G_{\rho}$, whose mapping properties are well known.

\begin{prop} \label{prop:grho_estimates}
Let $-1 < \delta < 0$, and let $\rho \in \mC^n$ with $\rho \cdot \rho = \lambda$ and $\abs{\rho} \geq 1$. Then for any $f \in L^2_{\delta+1}$ the equation $P_{\rho} u = f$ has a unique solution $u \in L^2_{\delta}$. The solution operator $G_{\rho}: f \mapsto u$ satisfies 
\begin{equation*}
\norm{\partial^{\alpha} G_{\rho} f}_{L^2_{\delta}} \leq C \abs{\rho}^{\abs{\alpha}-1} \norm{f}_{L^2_{\delta+1}}
\end{equation*}
whenever $\abs{\alpha} \leq 2$.
\end{prop}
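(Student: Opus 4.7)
The statement is a version of the Sylvester--Uhlmann resolvent estimate for the conjugated Laplacian $P_\rho$, extended to first and second derivatives. My plan is to construct $G_\rho$ explicitly as the Fourier multiplier with symbol $1/p_\rho(\xi)$, where
\begin{equation*}
p_\rho(\xi) = \abs{\xi}^2 + 2\rho\cdot\xi = (\xi+\rho)\cdot(\xi+\rho) - \lambda,
\end{equation*}
and to prove the weighted bounds by analyzing the multipliers $\xi^\alpha/p_\rho(\xi)$ on the Fourier side.

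First I set up coordinates. Writing $\rho = \rho_R + i\rho_I$ with $\abs{\rho_R}^2 - \abs{\rho_I}^2 = \lambda$ and $\rho_R\cdot\rho_I = 0$, the symbol $p_\rho$ vanishes on the codimension-two real submanifold
\begin{equation*}
\Sigma_\rho = \{\xi \,;\, \abs{\xi+\rho_R}=\abs{\rho_R} \text{ and } \rho_I\cdot(\xi+\rho_R)=0\}.
\end{equation*}
After a rotation placing $\rho_R$ along $e_1$ and $\rho_I$ along $e_2$, and the substitution $\eta = \xi+\rho_R$, one has $p_\rho = \abs{\eta}^2-\abs{\rho_R}^2+2i\abs{\rho_I}\eta_2$ with $\abs{\rho_R}\sim\abs{\rho}$ for $\abs{\rho}\ge 1$. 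The heart of the proof is the base estimate $\abs{\alpha}=0$: I would show
\begin{equation*}
\norm{G_\rho f}_{L^2_\delta} \le C\abs{\rho}^{-1}\norm{f}_{L^2_{\delta+1}}, \quad -1<\delta<0,
\end{equation*}
via a dyadic decomposition in the distance to $\Sigma_\rho$. Far from $\Sigma_\rho$ one has $\abs{p_\rho}\gtrsim\abs{\rho}^2$ and the bound is direct. Near $\Sigma_\rho$ the gradient of $p_\rho$ has size $\sim\abs{\rho}$, so in local coordinates transverse to $\Sigma_\rho$ the multiplier $1/p_\rho$ scales like $\abs{\rho}^{-1}$ times the inverse distance to $\Sigma_\rho$; a Hardy-type weighted $L^2$ inequality, making essential use of $-1<\delta<0$, then yields the $\abs{\rho}^{-1}$ loss.

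For the derivative bounds ($\abs{\alpha}=1,2$) I would expand $\xi^\alpha = ((\xi+\rho)-\rho)^\alpha$. Terms carrying a factor $\rho^\beta$ reduce to the base estimate with an extra $\abs{\rho}^{\abs{\beta}}$, producing the required $\abs{\rho}^{\abs{\alpha}-1}$. Remaining terms have symbol $(\xi+\rho)^{\alpha-\beta}/p_\rho$; using $(\xi+\rho)\cdot(\xi+\rho) = p_\rho+\lambda$, this is bounded by $\abs{\xi+\rho}^{\abs{\alpha-\beta}-2}$ away from $\Sigma_\rho$ plus a term of the form $\lambda^{\abs{\alpha-\beta}/2}/\abs{p_\rho}$ near $\Sigma_\rho$ (where $\abs{\xi+\rho}\sim\sqrt{\lambda}$), both of which are controlled for $\abs{\alpha}\le 2$. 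A cleaner alternative is a bootstrap: once the $\abs{\alpha}=0$ bound is in hand, the equation $-\Delta u = f - 2\rho\cdot Du$ together with weighted elliptic regularity pulls in first and second derivatives at the price of factors of $\abs{\rho}$ exactly matching the claim.

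Uniqueness is a short Fourier argument: any $u\in L^2_\delta$ with $P_\rho u = 0$ has $\hat u$ supported on the measure-zero set $\Sigma_\rho$, and since $\delta>-n/2$ the class $\mathcal{F}(L^2_\delta)$ carries no singular component along a smooth codimension-two submanifold, forcing $u\equiv 0$. The main obstacle is the base-case weighted estimate near $\Sigma_\rho$, which requires the precise Sylvester--Uhlmann multiplier analysis; everything else is then either algebraic symbol manipulation or standard elliptic bootstrapping.
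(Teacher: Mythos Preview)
Your sketch is correct and matches the paper's approach, which simply cites \cite{sylvesteruhlmann} for the base estimate $\abs{\alpha}=0$ (their proof for $\lambda=0$ carrying over unchanged to $\lambda\neq 0$) and \cite{salo} for the elementary bootstrap to $\abs{\alpha}\le 2$. One small correction: in the uniqueness step the relevant threshold is $\delta>-1$, not $\delta>-n/2$, since it is precisely $\delta>-1$ that prevents the inverse Fourier transform of any nonzero distribution supported on the codimension-two sphere $\Sigma_\rho$ from lying in $L^2_\delta$.
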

\begin{proof}
This is proved in \cite{sylvesteruhlmann} in the case $\alpha = 0$ and $\lambda = 0$, but the same proof works for $\lambda \neq 0$. See \cite{salo} for the simple extension to $\abs{\alpha} \leq 2$.
\end{proof}

We will write 
\begin{equation*}
K_{\rho} = (2A \cdot D_{\rho} + \tilde{V}) G_{\rho}.
\end{equation*}
The following is the main result. The formulation may look complicated, but the result is stated so that one only needs to know the statement of this proposition in the later sections.

\begin{prop} \label{prop:cgo_solutions_main}
Let $-1 < \delta < 0$ and suppose $\rho \in \mC^n$ with $\rho \cdot \rho = \lambda > 0$. Assume \eqref{w_cgo_assumption} -- \eqref{ltwo_cgo_assumption}. If $\abs{\rho}$ is sufficiently large, the equation $(H-\lambda)u = 0$ has a unique solution $u = u_{\rho}$ of the form \eqref{urho_def} where $v_{\rho} \in H^1_{\delta}$. In fact 
\begin{equation} \label{vkrho_def}
v_{\rho} = G_{\rho} (I+K_{\rho})^{-1} (-2A \cdot \rho - \tilde{V}),
\end{equation}
where $I+K_{\rho}$ is bounded and invertible on $L^2_{\delta+1}$, and the norm of $(I+K_{\rho})^{-1}$ is uniformly bounded for $\abs{\rho}$ large. Further, if 
\begin{equation} \label{rhoh_def}
\rho = \rho(h) = h^{-1}(\nu_1 + i(1-h^2\lambda)^{1/2} \nu_2)
\end{equation}
where $\nu_1, \nu_2 \in \mR^n$ are orthogonal unit vectors and $h$ is small, and if 
\begin{equation} \label{phi_cgo}
\phi(x) = -\frac{1}{2\pi} \int_{\mR^2} \frac{1}{y_1+i y_2} (\nu_1 + i\nu_2) \cdot A(x-y_1 \nu_1 - y_2 \nu_2) \,dy_1 \,dy_2,
\end{equation}
then one has the asymptotics $1+v_{\rho} = a_{\rho} + r_{\rho}$ where 
\begin{eqnarray}
 & \norm{a_{\rho}}_{L^{\infty}} = O(1), \quad h \norm{\nabla a_{\rho}}_{L^{\infty}} = o(1), \quad a_{\rho} \to e^{i \phi} \text{ pointwise}, & \label{arho_asymptotics} \\
 & \norm{r_{\rho}}_{L^2_{\delta}} + h \norm{\nabla r_{\rho}}_{L^2_{\delta}} = o(1), & \label{rrho_asymptotics}
\end{eqnarray}
as $h \to 0$.
\end{prop}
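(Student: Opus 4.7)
The plan is to convert the equation for $u_\rho$ into an integral equation on $L^2_{\delta+1}$ via the free parametrix $G_\rho$, eliminate the leading $O(|\rho|)$ magnetic obstruction by a pseudodifferential conjugation with amplitude $b_\rho \approx e^{i\phi}$, and extract the asymptotics from the conjugated equation. Substituting \eqref{urho_def} into $(H-\lambda)u_\rho = 0$ and using \eqref{conjugated_operator} with $D_\rho \cdot 1 = \rho$ yields
\[
(P_\rho + 2A \cdot D_\rho + \tilde V)\, v_\rho \;=\; -2A \cdot \rho - \tilde V;
\]
setting $v_\rho = G_\rho w$ and applying $P_\rho G_\rho = I$ from Proposition \ref{prop:grho_estimates} gives the equivalent equation $(I + K_\rho) w = -2A \cdot \rho - \tilde V$, which is \eqref{vkrho_def} once $I + K_\rho$ is shown invertible.

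\emph{Pseudodifferential conjugation.} A direct Neumann series fails: the summand $2(A \cdot \rho)\, G_\rho$ has $L^2_{\delta+1}$-operator norm of order $O(1)$, because the $|\rho|^{-1}$ gain of $G_\rho$ is exactly cancelled by the $|\rho|$ loss from $A \cdot \rho$. Following \cite{nakamurauhlmann, salo}, we conjugate by an amplitude $b_\rho$ approximating $e^{i\phi}$. A direct computation yields
\[
b_\rho^{-1}(P_\rho + 2A \cdot D_\rho + \tilde V) b_\rho \;=\; P_\rho + 2\rho \cdot (\nabla\phi + A) - 2i\nabla\phi \cdot D + (\text{zeroth order}),
\]
so the $O(|\rho|)$ term vanishes precisely when $\rho \cdot (\nabla\phi + A) = 0$ to leading order. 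With $\rho = h^{-1}\zeta + O(h)$ and $\zeta = \nu_1 + i\nu_2$, this is the $\bar\partial$-equation $(\partial_{y_1} + i\partial_{y_2})\phi = -\zeta \cdot A$ in the $2$-plane $\mathrm{span}(\nu_1,\nu_2)$, whose Cauchy-kernel solution is exactly \eqref{phi_cgo}. The amplitude $b_\rho$ itself is constructed by semiclassical symbol calculus as a mollification of $e^{i\phi}$ at a scale coarser than $h$, so that \eqref{arho_asymptotics} holds and $b_\rho \to e^{i\phi}$ pointwise. After this cancellation, the residual conjugated operator $\tilde K_\rho$ on $L^2_{\delta+1}$ has operator norm $o(1)$ as $h \to 0$: the surviving magnetic piece $2\rho' \cdot (A + \nabla\phi)\, G_\rho$ (with $\rho' = \rho - h^{-1}\zeta = O(h)$) gives an $O(h^2)$ contribution, while the remaining first- and zeroth-order terms are controlled under \eqref{w_cgo_assumption}--\eqref{ltwo_cgo_assumption} by Proposition \ref{prop:grho_estimates} together with H\"older/Sobolev estimates on the $\br{x}^{-1} L^n$ coefficients and the decay of $\nabla\phi$ inherited from $A \in \br{x}^{-1-\varepsilon} C_b$. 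A Neumann series then inverts $I + \tilde K_\rho$ with uniformly bounded inverse, and transferring back yields invertibility of $I + K_\rho$ with the stated norm bound.

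\emph{Asymptotics.} The constructed inverse yields $1 + v_\rho = b_\rho(1 + \tilde v_\rho)$, where $\tilde v_\rho = G_\rho \tilde w$ and $\|\tilde w\|_{L^2_{\delta+1}} = O(1)$ since the source for the conjugated equation is bounded after the leading cancellation. Hence $\|\tilde v_\rho\|_{L^2_\delta} = O(h) = o(1)$ and $h\|\nabla \tilde v_\rho\|_{L^2_\delta} = O(h) = o(1)$ by Proposition \ref{prop:grho_estimates}, so setting $a_\rho = b_\rho$ and $r_\rho = b_\rho \tilde v_\rho$ delivers \eqref{arho_asymptotics} and \eqref{rrho_asymptotics}.

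The chief obstacle is the conjugation step: constructing $b_\rho$ globally from the Cauchy-transform formula \eqref{phi_cgo} and proving the $o(1)$ norm bound on $\tilde K_\rho$ under only the short-range integrability hypotheses \eqref{w_cgo_assumption}--\eqref{ltwo_cgo_assumption}, rather than the pointwise/compact-support assumptions of \cite{salo}. This requires weighted estimates for $\nabla \cdot A, V \in \br{x}^{-1} L^n$ and the $L^2$-tail bound \eqref{ltwo_cgo_assumption} to close the residual estimates after conjugation.
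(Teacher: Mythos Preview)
Your overall plan is correct in spirit, but the conjugation step as written has a genuine gap. You conjugate by a \emph{multiplication} operator $b_\rho \approx e^{i\phi}$, and after the computation you display, the conjugated first-order part is $2(\nabla\phi + A)\cdot D_\rho$. The choice \eqref{phi_cgo} kills only the component of $\nabla\phi + A$ along $\nu_1+i\nu_2$, so the zeroth-order piece $2\rho\cdot(\nabla\phi+A)$ is indeed $O(h)$ as you say; but the differential piece $2(\nabla\phi+A)\cdot D$ does \emph{not} vanish, and $\|(\nabla\phi+A)\cdot D\,G_\rho\|_{L^2_{\delta+1}\to L^2_{\delta+1}}$ is $O(1)$ (Proposition~\ref{prop:grho_estimates} gives only $\|DG_\rho\|=O(1)$, with no gain in $h$). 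Hence $\tilde K_\rho$ is not $o(1)$ and your Neumann series does not close. This is precisely why a multiplication amplitude is insufficient for the existence part.

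The paper's remedy is to replace the multiplication by a genuine semiclassical pseudodifferential operator $C=\mathrm{Op}_h(c)$ with $c=e^{i\chi_1(h^\theta x)\phi(x,\xi)}$, where the phase solves the $\xi$-\emph{dependent} transport equation $(\xi+\hat\rho)\cdot\nabla_x\phi(x,\xi)=-\psi(\xi)\chi(h^\theta x)(\xi+\hat\rho)\cdot A^\sharp(x)$ near the characteristic set $p^{-1}(0)$, with ellipticity of $P$ handling the complement. This yields the operator identity $(P+hQ^\sharp)C=\tilde C P + h^{1+\beta}\langle x\rangle^{-1}S$ and the decomposition $I+K_\rho = M+T$ with $M=\tilde C P_\rho C^{-1}G_\rho$ having an \emph{explicit} inverse $N=P_\rho C G_\rho \tilde C^{-1}$ and $\|T\|=o(1)$; no smallness of the full $K_\rho$ (or of any multiplicative conjugate of it) is ever claimed. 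For the asymptotics the paper does not reuse the conjugation: once Proposition~\ref{prop:cgosolutions} provides the uniform solvability estimate, one simply defines $a_\rho=e^{i\chi_\rho\phi^\sharp}$ (a function, built from the mollified $A^\sharp$), sets $r_\rho=1-a_\rho+v_\rho$, computes the source $f$ in \eqref{rrho_equation} directly, and checks $\|f\|_{L^2_{\delta+1}}=o(|\rho|)$ termwise, which gives \eqref{rrho_asymptotics} by the a priori estimate. Your identification $1+v_\rho=b_\rho(1+\tilde v_\rho)$ with $a_\rho=b_\rho$ is morally right for the asymptotics, but it cannot simultaneously serve as the mechanism for invertibility.
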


The rest of this section is devoted to proving Proposition \ref{prop:cgo_solutions_main}. Inserting \eqref{urho_def} in the equation $(H-\lambda)u = 0$, we see that to obtain complex geometrical optics solutions, it is enough to solve the conjugated equation 
\begin{equation} \label{conjugated_equation}
e^{-i\rho \cdot x} (H-\lambda) e^{i\rho \cdot x} v_{\rho} = f
\end{equation}
for a certain right hand side $f$. Most of the work will be to establish the following estimates for this equation.

\begin{prop} \label{prop:cgosolutions}
Let $-1 < \delta < 0$, and suppose $\rho \in \mC^n$ satisfies $\rho \cdot \rho = \lambda$. If \eqref{w_cgo_assumption} -- \eqref{v_cgo_assumption} hold and $\abs{\rho}$ is sufficiently large, then for any $f \in L^2_{\delta+1}$ the equation \eqref{conjugated_equation} has a unique solution $v \in H^1_{\delta}$. Further, $v \in H^2_{\delta}$, and 
\begin{equation*}
\norm{\partial^{\alpha} v}_{L^2_{\delta}} \leq C \abs{\rho}^{\abs{\alpha}-1} \norm{f}_{L^2_{\delta+1}}
\end{equation*}
whenever $\abs{\alpha} \leq 2$.
\end{prop}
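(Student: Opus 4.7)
The strategy is to rewrite \eqref{conjugated_equation} as an operator equation on $L^2_{\delta+1}$ and solve it by Neumann series, using the right inverse $G_\rho$ of $P_\rho$. Writing $v = G_\rho w$ reduces the problem to $(I+K_\rho)w = f$ with $K_\rho = (2A\cdot D_\rho + \tilde V)G_\rho$. Once $I+K_\rho$ is shown to be invertible with uniformly bounded inverse on $L^2_{\delta+1}$ for $\abs\rho$ large, Proposition \ref{prop:grho_estimates} immediately yields $v \in H^2_\delta$ together with the desired bound $\norm{\partial^\alpha v}_{L^2_\delta} \leq C\abs\rho^{\abs\alpha-1}\norm{f}_{L^2_{\delta+1}}$, and uniqueness follows by applying that same estimate to the difference of two candidate solutions of the homogeneous equation.

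The zeroth-order contribution $\tilde V G_\rho$ is contracting for $\abs\rho$ large: H\"older's inequality with $\tilde V \in \br{x}^{-1}L^n$, the Sobolev embedding $H^1_\delta \hookrightarrow L^{2n/(n-2)}_\delta$, and interpolation between $\norm{G_\rho f}_{L^2_\delta} \leq C\abs\rho^{-1}\norm{f}_{L^2_{\delta+1}}$ and $\norm{\nabla G_\rho f}_{L^2_\delta} \leq C\norm{f}_{L^2_{\delta+1}}$ give $\norm{\tilde V G_\rho f}_{L^2_{\delta+1}} \leq C\abs\rho^{-\theta}\norm{f}_{L^2_{\delta+1}}$ for some $\theta > 0$. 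The genuine obstruction is the first-order magnetic term: its principal part $2 A\cdot\rho\, G_\rho$ has size $O(1)$, because the factor $\abs\rho$ cancels the $\abs\rho^{-1}$ gain of $G_\rho$ exactly, so a naive Neumann series cannot close.

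To absorb this $O(1)$ obstruction I would invoke the pseudodifferential conjugation of \cite{nakamurauhlmann}, in the global short-range form developed in \cite{salo}. Choose orthonormal $\nu_1,\nu_2 \in \mR^n$ so that $\rho = \rho_R + i\rho_I$ with $\rho_R$ parallel to $\nu_1$ and $\rho_I$ parallel to $\nu_2$, and construct a symbol $\varphi_\rho \in S^0$ whose underlying classical phase $\Phi_\rho$ solves, to leading order in $\abs\rho^{-1}$, the $\bar\partial$-type equation $(\nu_1+i\nu_2)\cdot\nabla\Phi_\rho = -(\nu_1+i\nu_2)\cdot A$ in the $2$-plane spanned by $\nu_1,\nu_2$, solved explicitly by the Cauchy transform in \eqref{phi_cgo}. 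A direct computation then gives
\begin{equation*}
e^{-i\varphi_\rho^w}(P_\rho + 2A\cdot D_\rho + \tilde V)e^{i\varphi_\rho^w} = P_\rho + \widetilde K_\rho,
\end{equation*}
where the first-order part of $\widetilde K_\rho$ is $2(A+\nabla\Phi_\rho)\cdot D$, transverse to $\rho$ so that no $A\cdot\rho$ term survives, and the zeroth-order part is $\tilde V + \abs{\nabla\Phi_\rho}^2 + 2A\cdot\nabla\Phi_\rho - i\Delta\Phi_\rho$ plus pseudodifferential remainders from the symbol calculus. Each of these pieces, composed with $G_\rho$, is contracting on $L^2_{\delta+1}$ by the H\"older/Sobolev argument of the previous paragraph, so $I + \widetilde K_\rho G_\rho$ is invertible by Neumann series; undoing the conjugation recovers the solution $v$ and the estimate of Proposition \ref{prop:grho_estimates} transfers to give the claimed $H^2_\delta$-bound.

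The main obstacle is the global construction of $\varphi_\rho$ under only the short-range assumption \eqref{w_cgo_assumption} rather than compact support of $A$. When $A$ has compact support, the phase $\Phi_\rho$ from \eqref{phi_cgo} is bounded and a classical exponential conjugation by $e^{i\Phi_\rho}$ already suffices, which is essentially Nakamura--Uhlmann; under only polynomial decay, $\Phi_\rho$ need not be bounded and one must instead work with a frequency-cutoff pseudodifferential symbol $\varphi_\rho$, track its symbol seminorms uniformly in the large parameter $\abs\rho$, and verify that all the extra remainders produced by the symbol calculus remain small on $L^2_{\delta+1}$ after composition with $G_\rho$. This globalization of \cite{salo} to short-range coefficients is the core technical content of Section \ref{sec:cgo_solutions}.
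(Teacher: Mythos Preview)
Your overall strategy---reduce to $(I+K_\rho)w=f$, identify the $O(1)$ obstruction $2A\cdot\rho\,G_\rho$, and remove it by a pseudodifferential conjugation in the spirit of \cite{nakamurauhlmann}, \cite{salo}---is exactly the paper's. But the form you write for the conjugated operator is that of a \emph{classical} conjugation by a multiplication operator $e^{i\Phi_\rho(x)}$, and at that level the argument does not close.

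Concretely, you claim that after conjugation the first-order part of $\widetilde K_\rho$ is $2(A+\nabla\Phi_\rho)\cdot D$, and that this composed with $G_\rho$ is contracting on $L^2_{\delta+1}$ by the same H\"older/Sobolev argument you used for $\tilde V$. That step fails. The Cauchy transform $\Phi_\rho$ from \eqref{phi_cgo} satisfies only $\abs{\nabla\Phi_\rho(x)}\lesssim \br{x_T}^{-\varepsilon}\br{x_\perp}^{-1}$ (Lemma~\ref{lemma:cr_solution}); along the $2$-plane spanned by $\nu_1,\nu_2$ it decays like $\br{x}^{-\varepsilon}$, not $\br{x}^{-1}$. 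Hence multiplication by $\nabla\Phi_\rho$ does not map $L^2_\delta$ into $L^2_{\delta+1}$, and $\nabla\Phi_\rho\cdot D\,G_\rho$ is not even bounded on $L^2_{\delta+1}$, let alone small. Killing only the $\rho$-component of the coefficient is not enough: the transverse components of $A+\nabla\Phi_\rho$ are $O(1)$ and $\norm{D G_\rho}$ carries no gain.

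What the paper actually does is different in two essential ways. First, the conjugator $C=\mOp_h(c)$ has a genuinely $\xi$-dependent symbol $c(x,\xi)=e^{i\phi(x,\xi)}$, with $\phi$ solving the transport equation $(\xi+\hat\rho)\cdot\nabla_x\phi=-(\xi+\hat\rho)\cdot A^\sharp$ microlocally near the characteristic set $p^{-1}(0)$ (Proposition~\ref{prop:conjugation}). This absorbs the \emph{entire} first-order symbol $2A^\sharp\cdot(\xi+\hat\rho)$ there, not just its $\hat\rho$-part; away from $p^{-1}(0)$ the ellipticity of $p$ furnishes the factor $b_1 p$ that gets swallowed by $P$. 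The outcome is an intertwining identity $(P+hQ^\sharp)C=\tilde C P+h^{1+\beta}\br{x}^{-1}S$ rather than a conjugation, and the remainder is genuinely $o(1)$ after composition with $G_\rho$. Second, to make this work under the short-range assumption the paper introduces an $h$-dependent smoothing $A^\sharp=A*\chi_{h^{\sigma_0}}$ and spatial cutoffs at scale $h^{-\theta}$; the non-smooth piece $A^\flat$ and the tail are then handled as small perturbations (the operator $T$ in \eqref{t_def}). Your sketch omits both mechanisms, and without the $\xi$-dependence of the phase the leftover first-order term cannot be controlled.
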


In the case where $A \in C_c$ and $V \in L^{\infty}_c$ this was proved in \cite{salo} by using conjugation with semiclassical pseudodifferential operators. The conjugation method is due to Nakamura and Uhlmann \cite{nakamurauhlmann} in the context of inverse boundary value problems, and to Eskin and Ralston \cite{eskinralston} in inverse scattering problems. In \cite{salo} the method was extended to yield global solutions in weighted Sobolev spaces, and to handle nonsmooth coefficients. The proof involves a smoothing procedure and also a cutoff argument as in Kenig-Ponce-Vega \cite{kenigponcevega}. The proof of Proposition \ref{prop:cgosolutions} is parallel to that of \cite[Theorem 1.1]{salo}, except for the modifications needed because of coefficients which are not compactly supported.

We use the notation \eqref{conjugated_operator} and find the solution to \eqref{conjugated_equation} by a perturbation argument. We try $v = G_{\rho} w$, so $w$ must satisfy 
\begin{equation*}
(I + K_{\rho}) w = f.
\end{equation*}
We note that $K_{\rho}$ is bounded and compact on $L^2_{\delta+1}$ but it may not be small in norm. Therefore, one can not directly invert $I + K_{\rho}$ by Neumann series. Also, a possible Fredholm theory argument might not give the required uniform norm bound for $(I+K_{\rho})^{-1}$ for $\abs{\rho}$ large. To avoid these problems, we conjugate the original equation by pseudodifferential operators so that one obtains a norm small perturbation, which can be inverted by Neumann series.

It will be convenient to switch to semiclassical notation, since this automatically keeps track of the dependence of the norm estimates on $\abs{\rho}$. Thus, let $h = (\frac{1}{2}(\abs{\rho}^2+\lambda))^{-1/2}$ be the small parameter, and let 
\begin{align*}
P &= -h^2 \Delta + 2 \hat{\rho} \cdot hD, \\
Q &= 2A \cdot (hD + \hat{\rho})
\end{align*}
where $\hat{\rho} = h\rho$. Using $\rho \cdot \rho = \lambda$, there are orthogonal unit vectors $\nu_1, \nu_2 \in \mR^n$ so that 
\begin{equation*}
\hat{\rho} = \nu_1 + i (1-h^2 \lambda)^{1/2} \nu_2.
\end{equation*}
Here we assume $h$ small enough so $1-h^2 \lambda > 0$.

We will use the usual semiclassical symbol classes, see \cite{dimassisjostrand}.
\begin{definition}
If $0 \leq \sigma < 1/2$ and $m \in \mR$, we let $S^m_{\sigma}$ be the space of all functions $c(x,\xi) = c(x,\xi;h)$ where $x,\xi \in \mR^n$ and $h \in (0,h_0]$, $h_0 \leq 1$, such that $c$ is smooth in $x$ and $\xi$ and 
\begin{equation*}
\abs{\partial_x^{\alpha} \partial_{\xi}^{\beta} c(x,\xi)} \leq C_{\alpha \beta} h^{-\sigma\abs{\alpha+\beta}} \br{\xi}^m
\end{equation*}
for all $\alpha,\beta$. If $c \in S^m_{\sigma}$ we define an operator $C = \mOp_h(c)$ by 
\begin{equation*}
Cf(x) = (2\pi)^{-n} \int_{\mR^n} e^{ix\cdot\xi} c(x,h\xi) \hat{f}(\xi) \,d\xi.
\end{equation*}
\end{definition}

Note that we use the standard quantization instead of Weyl quantization in the definition of the operators. We will need the following basic properties.

\begin{prop} \label{psdoproperties}
\cite{dimassisjostrand}, \cite{salo} Let $c \in S^m_{\sigma}$ with $m \in \mR$ and $0 \leq \sigma < 1/2$.
\begin{enumerate}
\item[(a)] 
If $m = 0$ and $\delta \in \mR$ then $\mOp_h(c)$ is bounded $L^2_{\delta} \to L^2_{\delta}$, and there is a constant $M$ with 
\begin{equation*}
\norm{\mOp_h(c)}_{L^2_{\delta} \to L^2_{\delta}} \leq M
\end{equation*}
for $0 < h \leq h_0$.
\item[(b)] 
$hD_{x_j} \mOp_h(c) = \mOp_h(c) hD_{x_j} + h \mOp_h(D_{x_j} c)$.
\item[(c)] 
If $c \in S^m_{\sigma}$ and $d \in S^{m'}_{\sigma}$ then $\mOp_h(c) \mOp_h(d) = \mOp_h(r)$ where $r \in S^{m+m'}_{\sigma}$ satisfies for any $N$ 
\begin{equation*}
r = \sum_{\abs{\alpha} < N} \frac{h^{\abs{\alpha}} \partial_{\xi}^{\alpha} c D_x^{\alpha} d}{\alpha!} + h^{N(1-2\sigma)} S^{m+m'}_{\sigma}.
\end{equation*}
Also, $[\mOp_h(c), \mOp_h(d)] = \mOp_h(s)$ where $s \in S^{m+m'}_{\sigma}$ and 
\begin{equation*}
s = \frac{h}{i} H_{c} d + h^{2(1-2\sigma)} S^{m+m'}_{\sigma}
\end{equation*}
where $H_c = \nabla_{\xi} c \cdot \nabla_x - \nabla_x c \cdot \nabla_{\xi}$ is the Hamilton vector field of $c$.
\end{enumerate}
\end{prop}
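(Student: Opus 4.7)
The plan is to treat Proposition \ref{psdoproperties} as the standard semiclassical symbolic calculus for the exotic class $S^m_\sigma$ with $\sigma<1/2$, following \cite{dimassisjostrand}, and extending the $L^2$ estimates to weighted spaces $L^2_\delta$ as in \cite{salo}. The condition $\sigma<1/2$ is used throughout: each derivative of a symbol in $S^m_\sigma$ costs only $h^{-\sigma}$, so integration by parts in the various oscillatory integrals produces net gains of positive powers of $h$ (specifically $h^{1-\sigma}$ or $h^{1-2\sigma}$).

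For part (a), I would first establish $h$-uniform $L^2\to L^2$ boundedness via a Calder\'on--Vaillancourt argument: decompose the symbol via a partition of unity in $\xi$ into pieces whose kernels are controlled by Schur's lemma (or equivalently use the Cotlar--Stein almost-orthogonality lemma), tracking the $h$-dependence through the bounds. The resulting estimate is $h$-uniform precisely because $\sigma<1/2$. To promote this to $L^2_\delta$, I would use a pseudolocality estimate on the Schwartz kernel $K_h(x,y)$ of $\mOp_h(c)$, obtained by integration by parts in $\eta$ in the representation $K_h(x,y) = (2\pi h)^{-n}\int e^{i(x-y)\cdot\eta/h} c(x,\eta)\,d\eta$ after dyadic localization in $\eta$. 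This shows $K_h$ is concentrated near the diagonal at scale $h^{1-\sigma}$; since $\br{x}^\delta$ and $\br{y}^\delta$ are comparable for $\abs{x-y}\leq 1$, a second Schur estimate applied to the kernel of $\br{x}^\delta \mOp_h(c)\br{y}^{-\delta}$ yields the weighted bound.

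Parts (b) and (c) are direct semiclassical calculus computations. Part (b) follows immediately by applying the product rule to $hD_{x_j}(e^{ix\cdot\xi} c(x,h\xi))$ inside the defining oscillatory integral, which splits into $\mOp_h(c) hD_{x_j}f$ and $h\mOp_h(D_{x_j}c)f$. For part (c), I would use the standard composition representation
\begin{equation*}
r(x,\xi;h) = (2\pi h)^{-n}\iint e^{-iy\cdot\eta/h} c(x,\xi+\eta) d(x+y,\xi)\,dy\,d\eta,
\end{equation*}
Taylor expand $c(x,\xi+\eta)$ in $\eta$ around $0$ to order $N$, convert each $\eta^\alpha$ into $(hD_y)^\alpha$ acting on $d(x+y,\xi)$ by integration by parts, and evaluate at $y=0$ to read off the asymptotic sum $\sum_{\abs{\alpha}<N} h^{\abs{\alpha}} \partial_\xi^\alpha c\, D_x^\alpha d / \alpha!$. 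The remainder is controlled by non-stationary phase in $(y,\eta)$, costing $h^{-2\sigma}$ per symbol derivative but gaining $h$ per integration by parts, yielding the $h^{N(1-2\sigma)} S^{m+m'}_\sigma$ estimate. The commutator formula then follows by subtracting the $c\leftrightarrow d$ expansion at $N=2$: the $\abs{\alpha}=0$ terms cancel in $cd-dc$, and the $\abs{\alpha}=1$ terms combine into $(h/i)(\partial_\xi c\cdot D_x d - \partial_\xi d\cdot D_x c) = (h/i) H_c d$.

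The main technical obstacle is the $h$-uniform weighted $L^2_\delta$ estimate in part (a), since multiplication by $\br{x}^\delta$ does not preserve the class $S^0_\sigma$; the pseudolocality kernel estimate is precisely what circumvents this. Beyond that, the proposition is standard bookkeeping within the semiclassical calculus, with $\sigma<1/2$ ensuring all error terms genuinely improve as $h\to 0$.
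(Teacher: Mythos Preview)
The paper does not supply a proof of this proposition; it is quoted from \cite{dimassisjostrand} and \cite{salo} as a known result of the semiclassical calculus. Your outline is a correct sketch of the standard arguments found in those references: parts (b) and (c) are the routine symbolic calculus, and for part (a) the unweighted $L^2$ bound is Calder\'on--Vaillancourt while the passage to $L^2_\delta$ via kernel pseudolocality and Peetre's inequality is precisely the route taken in \cite{salo}.
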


Finally, to manage the nonsmooth coefficients, we introduce the standard mollifier $\chi_{\delta}(x) = \delta^{-n} \chi(x/\delta)$ where $\chi \in C_c^{\infty}(\mR^n)$, $0 \leq \chi \leq 1$, and $\int \chi \,dx = 1$. Fix $\sigma_0$ with $0 < \sigma_0 < 1/3$, and consider the $h$-dependent smooth approximation 
\begin{equation*}
A^{\sharp} = A \ast \chi_{\delta},
\end{equation*}
with the specific choice 
\begin{equation*}
\delta = h^{\sigma_0}.
\end{equation*}
We write $A^{\flat} = A - A^{\sharp}$, and note the following standard estimates whose proof is included for completeness.

\begin{lemma} \label{lemma:mollifier_estimates}
If $0 < \varepsilon_0 < \varepsilon$, then as $h \to 0$ 
\begin{equation*}
\norm{\br{x}^{1+\varepsilon} \partial_x^{\alpha} A^{\sharp}}_{L^{\infty}} = O(h^{-\sigma_0 \abs{\alpha}}), \qquad \norm{\br{x}^{1+\varepsilon_0}A^{\flat}}_{L^{\infty}} = o(1).
\end{equation*}
\end{lemma}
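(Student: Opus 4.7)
The plan is to treat the two estimates separately: the first follows from the pointwise decay of $A$ combined with the $L^1$ bound on $\partial^\alpha \chi_\delta$, and the second uses a split into a tail region (where the extra decay of $A$ helps) and a bounded region (where uniform continuity of $A$ suffices).

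For the first bound, I would differentiate under the convolution, writing $\partial_x^\alpha A^\sharp = A \ast \partial^\alpha \chi_\delta$, and use $\norm{\partial^\alpha \chi_\delta}_{L^1} = C_\alpha \delta^{-\abs{\alpha}} = C_\alpha h^{-\sigma_0 \abs{\alpha}}$. Since $\chi_\delta$ is supported in $\{\abs{y} \leq \delta\}$ with $\delta = h^{\sigma_0} \leq 1$ for small $h$, we have $\br{x-y} \sim \br{x}$ uniformly for $y$ on the support, and the hypothesis $A \in \br{x}^{-1-\varepsilon} C_b$ from \eqref{w_cgo_assumption} gives
\begin{equation*}
\abs{\partial_x^\alpha A^\sharp(x)} \leq \int \abs{A(x-y)} \abs{\partial^\alpha \chi_\delta(y)} \,dy \leq C \br{x}^{-1-\varepsilon} h^{-\sigma_0 \abs{\alpha}},
\end{equation*}
which after multiplying by $\br{x}^{1+\varepsilon}$ and taking the supremum yields the first estimate.

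For the second estimate I would write $A^\flat(x) = \int [A(x) - A(x-y)] \chi_\delta(y)\,dy$. Given $\eta > 0$, first choose $R$ so large that $R^{-(\varepsilon-\varepsilon_0)} < \eta$. For $\abs{x} \geq R$, the same pointwise bound used for $A^\sharp$ gives $\abs{A^\flat(x)} \leq C \br{x}^{-1-\varepsilon}$ uniformly in $h$, hence $\br{x}^{1+\varepsilon_0} \abs{A^\flat(x)} \leq C \br{x}^{-(\varepsilon-\varepsilon_0)} \leq C\eta$. For $\abs{x} \leq R$, the function $A$ is uniformly continuous on $\{\abs{z} \leq R+1\}$ with some modulus $\omega_R$, and since $\supp \chi_\delta \subset \{\abs{y} \leq \delta\}$,
\begin{equation*}
\br{x}^{1+\varepsilon_0} \abs{A^\flat(x)} \leq \br{R}^{1+\varepsilon_0} \omega_R(\delta) \to 0 \quad \text{as } h \to 0.
\end{equation*}
Taking $h$ small enough finishes the argument. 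The only mildly subtle point---and the reason the statement requires $\varepsilon_0 < \varepsilon$---is that continuity of $A$ gives no quantitative rate on $A^\flat$, so the tail estimate must trade a small power of $\br{x}$ against the extra decay of $A$ to be made small uniformly in $h$.
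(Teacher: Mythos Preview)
Your argument is correct. The approach differs slightly from the paper's: for the first estimate the paper sets up a kernel $K(x,y) = \frac{\br{x}^r}{\br{y}^r}\,\delta^{-n}\partial^{\alpha}\chi\!\left(\frac{x-y}{\delta}\right)$ and invokes Schur's lemma, which yields the bound simultaneously for all $L^p$; your direct use of $\br{x-y}\sim\br{x}$ on $\supp\chi_\delta$ is a cleaner route to the $L^\infty$ case alone. For the second estimate the paper writes $\br{x}^{1+\varepsilon_0}A^{\flat} = (g\ast\chi_\delta - g) + \text{(commutator term)}$ with $g=\br{x}^{1+\varepsilon_0}A$, observes that $g$ is globally bounded and uniformly continuous (this is where $\varepsilon_0<\varepsilon$ enters, via decay at infinity), and handles the commutator by a further Schur estimate; your explicit near/far split accomplishes the same thing more concretely. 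Both routes are short, and the trade-off is generality (Schur, all $L^p$) versus transparency (your pointwise argument).
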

\begin{proof}
If $f \in L^1_{\text{loc}}(\mR^n)$ and $r$ is a real number we have 
\begin{equation*}
\br{x}^r \partial_x^{\alpha} (f \ast \chi_{\delta})(x) = \delta^{-\abs{\alpha}} \int K(x,y) \br{y}^r f(y) \,dy
\end{equation*}
where $K(x,y) = \frac{\br{x}^r}{\br{y}^r} \delta^{-n} \partial_x^{\alpha}\chi(\frac{x-y}{\delta})$. If $\delta$ is small enough one sees that $\abs{K(x,y)} \leq 2^{\abs{r}} \delta^{-n} \abs{\partial_x^{\alpha}\chi(\frac{x-y}{\delta})}$, and 
\begin{equation*}
\int \abs{K(x,y)} \,dx \leq C_{r,\alpha}, \quad \int \abs{K(x,y)} \,dy \leq C_{r,\alpha}.
\end{equation*}
Schur's lemma implies $\norm{\br{x}^r \partial_x^{\alpha}(f \ast \chi_{\delta})}_{L^p} \leq C_{r,\alpha} \delta^{-\abs{\alpha}} \norm{\br{x}^r f}_{L^p}$. This and \eqref{w_cgo_assumption} give the $L^{\infty}$ estimates for $A^{\sharp}$.

For the estimates on $A^{\flat}$, we write 
\begin{equation*}
\br{x}^r (f \ast \chi_{\delta} - f)(x) = (g \ast \chi_{\delta} - g)(x) + \int K(x,y) g(y) \,dy
\end{equation*}
where $g = \br{x}^r f$ and $K(x,y) = \chi_{\delta}(x-y)\big[ \frac{\br{x}^r}{\br{y}^r}-1\big]$. Since 
\begin{equation*}
\abs{K(x,y)} \leq C_r \delta \chi_{\delta}(x-y)
\end{equation*}
we have from Schur's lemma that $\norm{\int K(\,\cdot\,,y)g(y) \,dy}_{L^p} \to 0$ as $\delta \to 0$, for $1 \leq p \leq \infty$. For $A^{\flat}$ it is enough to note that $g = \br{x}^{1+\varepsilon_0} A$ is bounded and uniformly continuous, so $g_{\delta} - g \to 0$ in $L^{\infty}$.
\end{proof}

We will use a decomposition $Q = Q^{\sharp} + Q^{\flat}$ where $Q^{\sharp} = 2A^{\sharp} \cdot (hD + \hat{\rho})$ and $Q^{\flat} = 2A^{\flat} \cdot (hD + \hat{\rho})$. Then, we will use pseudodifferential operators to conjugate away the smooth part $Q^{\sharp}$, and when $h$ is small the nonsmooth part $Q^{\flat}$ will be negligible. We are finally ready to give the pseudodifferential conjugation argument.

\begin{prop} \label{prop:conjugation}
Given $\sigma$ with $\sigma_0 < \sigma < 1/3$, there exist $c,\tilde{c},s \in S^0_{\sigma}$ and $\beta > 0$ such that 
\begin{equation} \label{conjugationidentity}
(P + hQ^{\sharp})C = \tilde{C}P + h^{1+ \beta} \br{x}^{-1} S.
\end{equation}
Further, $C$ and $\tilde{C}$ are elliptic, in the sense that $c$ and $\tilde{c}$ are nonvanishing for small $h$.
\end{prop}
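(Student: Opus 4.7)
The plan is to carry out a pseudodifferential conjugation that reduces the magnetic perturbation $hQ^\sharp$ to an error of order $h^{1+\beta}$ carrying weight $\br{x}^{-1}$. The geometric input is the eikonal phase
$$\phi^\sharp(x) = -\frac{1}{2\pi}\int_{\mR^2}\frac{1}{y_1+iy_2}(\nu_1+i\nu_2)\cdot A^\sharp(x-y_1\nu_1-y_2\nu_2)\,dy_1\,dy_2,$$
the Cauchy transform of $(\nu_1+i\nu_2)\cdot A^\sharp$ in the two-plane spanned by $\nu_1$ and $\nu_2$, in analogy with \eqref{phi_cgo}. The Cauchy--Riemann identity yields $(\nu_1+i\nu_2)\cdot\partial_x\phi^\sharp = (\nu_1+i\nu_2)\cdot A^\sharp$, which is the cancellation we will exploit since $\hat\rho = (\nu_1+i\nu_2) + i[(1-h^2\lambda)^{1/2}-1]\nu_2 = (\nu_1+i\nu_2) + O(h^2)$. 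I would take $c = e^{-i\phi^\sharp}$ and $\tilde c = c + hb$, with a lower-order correction $b \in S^0_\sigma$ constructed by an elliptic parametrix argument (as in \cite{salo}) to absorb the residue in the directions transverse to the null cone.

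The symbol-class claim $c \in S^0_\sigma$ follows from Lemma \ref{lemma:mollifier_estimates}, which provides $\norm{\br{x}^{1+\varepsilon}\partial^\alpha A^\sharp}_{L^\infty} = O(h^{-\sigma_0\abs{\alpha}})$, combined with Schur-type bounds for the Cauchy kernel yielding $\norm{\partial^\alpha \phi^\sharp}_{L^\infty} = O(h^{-\sigma_0\abs{\alpha}})$. Since $\sigma_0 < \sigma$ and $\phi^\sharp$ is $\xi$-independent, this puts $c = e^{-i\phi^\sharp}$ in $S^0_\sigma$, and ellipticity $\abs{c} = e^{\im\phi^\sharp} \asymp 1$ is immediate. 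Applying Proposition \ref{psdoproperties}(c) with the polynomial $p(\xi) = \xi^2 + 2\hat\rho\cdot\xi$ gives the exact expansions
$$PC = \mOp_h\bigl(pc + \tfrac{2h}{i}(\xi+\hat\rho)\cdot\partial_x c - h^2\Delta c\bigr), \qquad \tilde CP = \mOp_h(\tilde c\, p),$$
and expanding $hQ^\sharp C$ to order $N$ gives the leading term $2hA^\sharp\cdot(\xi+\hat\rho)c$ with remainder in $h^{1+N(1-2\sigma)} S$. Substituting $\partial_x c = -i(\partial_x\phi^\sharp) c$, the $h^1$-order residue in the symbol of $(P+hQ^\sharp)C - \tilde CP$ reduces to
$$2h\,(\xi+\hat\rho)\cdot\bigl[A^\sharp - \partial_x\phi^\sharp\bigr]c - hbp,$$
and choosing $b$ via elliptic division off $\{p=0\}$ (together with a smooth extension across the characteristic set, using that the Cauchy--Riemann identity makes the first factor vanish on the null direction of $\hat\rho$ up to $O(h^2)$) absorbs this into the $h^{1+\beta}\br{x}^{-1}S$ class. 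Taking $N$ large enough that $N(1-2\sigma) > 1$, possible since $\sigma < 1/2$, handles the symbol-calculus remainder.

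The main obstacle, and the essential refinement beyond the compactly supported case of \cite{salo}, is securing the $\br{x}^{-1}$ decay factor in the remainder symbol $s$. The two-dimensional Cauchy kernel is not integrable at infinity, so decay of $\phi^\sharp$ must be extracted directly from the hypothesis \eqref{w_cgo_assumption}. I would split the integral defining $\phi^\sharp$ into a near contribution (controlled by local $L^\infty$ bounds on $A^\sharp$ together with the local $L^1$-integrability of $1/(y_1+iy_2)$) and a far contribution (where the $\br{x}^{-1-\varepsilon}$ tail of $A^\sharp$, inherited from $A$ via Lemma \ref{lemma:mollifier_estimates}, overtakes the slow decay of the kernel). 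This yields $\br{x}^{-1}$ weight on each $\partial^\alpha\phi^\sharp$ with $\abs{\alpha}\geq 1$, on $\Delta\phi^\sharp$, and on the residue $A^\sharp - \partial_x\phi^\sharp$. Feeding these weights back through the construction of $b$ and combining with the $O(h^2)$ contributions from the $\hat\rho$-mismatch, from $-h^2\Delta c$, and from the symbol-calculus remainder, one obtains $s \in S^0_\sigma$ and $\beta > 0$, completing the identity.
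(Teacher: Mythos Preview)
Your approach has a genuine gap at the transport-equation step. You take a $\xi$-independent phase $\phi^\sharp$ solving $(\nu_1+i\nu_2)\cdot\nabla_x\phi^\sharp = \pm(\nu_1+i\nu_2)\cdot A^\sharp$, and you argue that this produces cancellation because $\hat\rho = (\nu_1+i\nu_2)+O(h^2)$. But the first-order residue in the symbol calculus is
\[
2c\,(\xi+\hat\rho)\cdot\bigl[A^\sharp - \partial_x\phi^\sharp\bigr],
\]
and the vector $\xi+\hat\rho$ is \emph{not} approximately $\nu_1+i\nu_2$ on the whole characteristic set: $p^{-1}(0)=\{\,|\xi+\nu_1|=1,\ \xi\cdot\nu_2=0\,\}$ is a codimension-two sphere, and as $\xi$ runs over it the real part $\xi+\nu_1$ sweeps out the entire unit sphere in $\nu_2^\perp$. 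Your Cauchy--Riemann identity therefore kills the residue only at the single point $\xi=0$, not on the rest of $p^{-1}(0)$. Away from $\xi=0$ but still on the characteristic set, the residue is $O(1)$ in $h$ and $p$ vanishes, so elliptic division by $p$ is impossible and no ``smooth extension across the characteristic set'' is available. This is exactly why the paper uses a $\xi$-dependent phase $\phi(x,\xi)$, obtained from Lemma~\ref{lemma:crparam_solution} with $\gamma_1(\xi)=\xi+\nu_1$, solving the transport equation $(\xi+\hat\rho)\cdot\nabla_x\phi = -\psi(\xi)\chi(h^\theta x)(\xi+\hat\rho)\cdot A^\sharp$ for each $\xi$ near $p^{-1}(0)$.

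A second difficulty is the $\br{x}^{-1}$ weight. The two-dimensional Cauchy transform of a function decaying like $\br{x}^{-1-\varepsilon}$ only gives $\br{x_T}^{-\varepsilon}\br{x_\perp}^{-1}$ (Lemma~\ref{lemma:cr_solution}); in the $(\nu_1,\nu_2)$-plane the decay of $\partial^\alpha\phi^\sharp$ is merely $\br{x}^{-\varepsilon}$, not $\br{x}^{-1}$, so your near/far splitting does not deliver the weight you claim. The paper sidesteps this by inserting a spatial cutoff $\chi_1(h^\theta x)$ at scale $h^{-\theta}$ with $\theta=\sigma-\sigma_0$: the symbol $c=e^{i\chi_1(h^\theta x)\phi}$ equals $1$ outside that ball, and on the transition region $\br{x}\sim h^{-\theta}$ the $\br{x}^{-\varepsilon}$ decay of $\phi$ converts into an $h^{\theta\varepsilon}$ gain, which furnishes the positive $\beta$. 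Both the $\xi$-dependence of the phase and the $h$-dependent spatial cutoff are essential ingredients missing from your proposal.
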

\begin{proof}
Suppose $c \in S^0_{\sigma}$ is any symbol. We use Proposition \ref{psdoproperties} and compute 
\begin{multline} \label{firstcommutation}
(P + hQ^{\sharp})C = CP + h \mOp_h(\frac{1}{i} H_p c + 2((\xi + \hat{\rho}) \cdot A^{\sharp}) c) \\
 + h^2 \mOp_h(-\Delta_x c + 2A^{\sharp} \cdot D_x c).
\end{multline}
Here $p(\xi) = \xi^2 + 2\hat{\rho} \cdot \xi$ is the symbol of $P$, and $H_p = 2(\xi + \hat{\rho}) \cdot \nabla_x$ is the Hamilton vector field. The last term is of the form $h^{2-2\sigma} \mOp_h\,S^0_{\sigma}$, and since $\sigma < 1/2$ this has order lower than one. Thus, we would like to choose $c$ such that 
\begin{equation} \label{creq}
\frac{1}{i} H_p c + 2((\xi + \hat{\rho}) \cdot A^{\sharp})c = 0.
\end{equation}
This is a Cauchy-Riemann type equation near the zero set 
\begin{equation*}
p^{-1}(0) = \{ \xi \in \mR^n \,;\, \abs{\xi+\nu_1} = 1, \xi \cdot \nu_2 = 0 \}.
\end{equation*}
Since the principal part $P$ of $P + h Q^{\sharp}$ is elliptic away from the zero set, it will be sufficient to solve \eqref{creq} near $p^{-1}(0)$, and the ellipticity will take care of the rest.

Consider a neighborhood of $p^{-1}(0)$, 
\begin{equation*}
U = U(\delta) = \{ \xi \in \mR^n \,;\, 1-\delta < \abs{\xi+\nu_1} < 1+\delta,\ \abs{\xi \cdot \nu_2} < \delta \}
\end{equation*}
where $\delta = \frac{1}{200}$. We introduce frequency cutoffs $\psi, \psi_1 \in C_c^{\infty}(U(\delta))$ with $\psi_1(\xi) = 1$ on $U(\delta/2)$ and $\psi(\xi) = 1$ near $\supp(\psi_1)$, and also spatial cutoffs $\chi, \chi_1 \in C_c^{\infty}(B(0,1))$ with $\chi_1(x) = 1$ for $\abs{x} \leq 1/2$, and $\chi(x) = 1$ near $\supp(\chi_1)$. The spatial cutoffs will actually be adapted to a ball $B = B(0,M)$, where $M = h^{-\theta}$ is a large parameter depending on $h$, and $\theta = \sigma - \sigma_0 > 0$.

The symbol $c$ is chosen as $c = e^{i\chi_1(h^{\theta} x)\phi}$, where $\phi$ is the solution provided by Lemma \ref{lemma:crparam_solution} below (with $\gamma_1(\xi) = \xi + \nu_1$ and $\gamma_2(\xi) = (1-h^2 \lambda)^{1/2} \nu_2$) to the equation 
\begin{equation} \label{phieq}
(\xi+\hat{\rho}) \cdot \nabla_x \phi(x,\xi) = -\psi(\xi) \chi(h^{\theta} x) (\xi+\hat{\rho}) \cdot A^{\sharp}(x), \quad (x,\xi) \in B \times U.
\end{equation}
The lemma also implies the norm estimates 
\begin{equation*}
\abs{\partial_x^{\alpha} \partial_{\xi}^{\beta} \phi(x,\xi)} \leq C_{\alpha \beta \varepsilon} \Big( \sum_{\abs{\gamma} \leq \abs{\alpha+\beta}} \norm{\br{x}^{1+\varepsilon} \partial_x^{\gamma} A^{\sharp}}_{L^{\infty}} \Big) h^{-\theta\abs{\beta}} \br{x}^{-\varepsilon},
\end{equation*}
and Lemma \ref{lemma:mollifier_estimates} in turn gives 
\begin{equation} \label{phiestimates}
\abs{\partial_x^{\alpha} \partial_{\xi}^{\beta} \phi(x,\xi)} \leq C_{\alpha \beta \varepsilon} h^{-\sigma\abs{\alpha+\beta}} \br{x}^{-\varepsilon},
\end{equation}
for $(x,\xi) \in B \times U$. The fact that $\phi$ vanishes when $\xi$ is outside of $U$ shows that $c \in S^0_{\sigma}$, and $c$ is nonvanishing.

With $c$ as above, the left hand side of \eqref{creq} can be written as 
\begin{equation*}
\frac{1}{i} H_p c + 2((\xi+\hat{\rho}) \cdot A^{\sharp})c = b_1 p + \br{x}^{-1} s_1
\end{equation*}
where 
\begin{align*}
b_1 &= \frac{1-\psi_1(\xi)}{p} (\frac{1}{i} H_p c + 2((\xi+\hat{\rho}) \cdot A^{\sharp})c), \\
s_1 &= \psi_1(\xi) \br{x} (\frac{1}{i} H_p c + 2((\xi+\hat{\rho}) \cdot A^{\sharp})c).
\end{align*}
Since $\frac{1-\psi_1(\xi)}{p} \in S^{-2}_{\sigma}$, we get $b_1 \in h^{-\sigma} S_{\sigma}^{-1}$. For $s_1$ we use \eqref{phieq} to obtain 
\begin{equation*}
s_1 = 2 c \psi_1(\xi) \left[ h^{\theta} \br{x} (\xi + \hat{\rho}) \cdot \nabla \chi_1(h^{\theta} x) \phi + (1-\chi_1(h^{\theta} x)) (\xi + \hat{\rho}) \cdot \br{x} A^{\sharp} \right].
\end{equation*}
This is a sum of two terms where the first term is in $h^{\theta \varepsilon} S^0_{\sigma}$, using that $\norm{h^{\theta} \br{x} \nabla \chi_1(h^{\theta} x)}_{L^{\infty}} < \infty$, the estimates \eqref{phiestimates}, and the fact that $\br{x} \sim h^{-\theta}$ on $\supp(\nabla \chi_1(h^{\theta} \,\cdot\,))$. Also the second term is in $h^{\theta \varepsilon} S^0_{\sigma}$, which follows since $\abs{\br{x} A^{\sharp}(x)} \leq C_{\varepsilon} h^{\varepsilon \theta} \norm{\br{x}^{1+\varepsilon} A^{\sharp}}_{L^{\infty}}$ on $\supp(1-\chi_1(h^{\theta} x))$.

Going back to \eqref{firstcommutation}, we have proved that 
\begin{equation*}
(P + hQ^{\sharp})C = \tilde{C}P + h^{1+ \beta} \br{x}^{-1} S
\end{equation*}
where $\tilde{c} = c + hb_1$ is in $S^0_{\sigma}$ and nonvanishing for small $h$, and we have chosen $\beta = \min\{1-2\sigma-\theta, \theta\varepsilon\} > 0$. One has 
\begin{equation*}
s = h^{-\beta} s_1 + h^{1-\beta}\br{x}(-\Delta_x c + 2A^{\sharp} \cdot D_x c).
\end{equation*}
Then $s \in S^0_{\sigma}$, and the proof is finished.
\end{proof}

The proof of the preceding result is complete modulo Lemma \ref{lemma:crparam_solution} which is deferred to the end of the section. We move to the proof of Proposition \ref{prop:cgosolutions}.

\begin{proof}[Proof of Proposition \ref{prop:cgosolutions}]
We start by showing existence of solutions to \eqref{conjugated_equation}. Using the notation above, we need to solve 
\begin{equation} \label{expanded_eq}
(P_{\rho} + 2A^{\sharp} \cdot D_{\rho} + 2A^{\flat} \cdot D_{\rho} +A^2 + D \cdot A + V)v = f.
\end{equation}
We try a solution of the form $v = G_{\rho} w$ with $w \in L^2_{\delta+1}$. Then $v = C C^{-1} G_{\rho} w$, where $C^{-1}$ is the inverse of $C$ on $L^2_{\delta}$ which exists for small $h$. Inserting this in \eqref{expanded_eq} and using \eqref{conjugationidentity} gives 
\begin{equation*}
(M+T)w = f
\end{equation*}
where 
\begin{eqnarray}
 & M = \tilde{C} P_{\rho} C^{-1} G_{\rho}, & \\
 & T = h^{-1+ \beta} \br{x}^{-1} S C^{-1} G_{\rho} + 2A^{\flat} \cdot D_{\rho} G_{\rho} + (A^2 + D \cdot A + V) G_{\rho}. \label{t_def}
\end{eqnarray}
The mapping properties of $G_{\rho}$ and $S$, together with the estimates in Lemma \ref{lemma:mollifier_estimates} and Sobolev embedding, show that $\norm{T}_{L^2_{\delta+1} \to L^2_{\delta+1}} = o(1)$ as $\abs{\rho} \to \infty$. More precisely, the term involving $V$ (the term with $D \cdot A$ is similar) can be handled by taking $V_0 = \br{x} V$ and by considering the smooth approximation 
\begin{equation*}
V^{\sharp}(x) = \br{x}^{-1} \chi(x/\abs{\rho}) (V_0 \ast \chi_{\alpha} )(x)
\end{equation*}
where $\chi_{\alpha}$ is the mollifier considered above and $\alpha = \abs{\rho}^{-\sigma_0}$. If $V^{\flat} = V - V^{\sharp}$ one obtains 
\begin{eqnarray*}
 & \norm{\br{x} V^{\sharp}}_{L^{\infty}} \leq \norm{V_0}_{L^n} \norm{\chi_{\alpha}}_{L^{n/(n-1)}} \leq C \abs{\rho}^{\sigma_0}, & \\
 & \norm{\br{x} V^{\flat}}_{L^n} \leq \norm{(1-\chi(x/\abs{\rho}))  (V_0 \ast \chi_{\alpha} )}_{L^n} + \norm{V_0 \ast \chi_{\alpha} - V_0}_{L^n} = o(1). & 
\end{eqnarray*}
The embedding $H^1 \subseteq L^{\frac{2n}{n-2}}$ and the estimates for $G_{\rho}$ imply 
\begin{align*}
\norm{V G_{\rho}}_{L^2_{\delta+1} \to L^2_{\delta+1}} \leq \frac{C}{\abs{\rho}} \norm{\br{x} V^{\sharp}}_{L^{\infty}} + C \norm{\br{x} V^{\flat}}_{L^n}
\end{align*}
which gives the required result.

We want to show that for $M$ there is an explicit inverse $N = P_{\rho} C G_{\rho} \tilde{C}^{-1}$. Using \eqref{conjugationidentity}, we can write $M$ and $N$ as 
\begin{eqnarray}
 & M = I + h^{-1} Q^{\sharp} G_{\rho} - h^{-1+ \beta} \br{x}^{-1} S C^{-1} G_{\rho}, & \label{m_alternative} \\
 & N = I - h^{-1} Q^{\sharp} C G_{\rho} \tilde{C}^{-1} + h^{-1+ \beta} \br{x}^{-1} S G_{\rho} \tilde{C}^{-1}.
\end{eqnarray}
It follows that both $M$ and $N$ are bounded on $L^2_{\delta+1}$, with norms uniformly bounded in $\rho$ when $\abs{\rho}$ is large. Now, if $u, f \in L^2_{\delta+1}$ one can show that $Mu = f$ if and only if $u = N f$, by Proposition \ref{prop:grho_estimates} and the boundedness of pseudodifferential operators on weighted Sobolev spaces. This gives that $N = M^{-1}$. Then, for $\rho$ large, $(M+T)^{-1}$ exists by Neumann series and we obtain a solution 
\begin{equation*}
v = G_{\rho} (M+T)^{-1} f.
\end{equation*}
The norm estimates follow from the mapping properties of $G_{\rho}$.

It remains to show uniqueness of solutions to \eqref{conjugated_equation}. Suppose $v \in H^1_{\delta}$ and $(P_{\rho} + 2A \cdot D_{\rho} + \tilde{V}) v = 0$. We can rewrite this as $P_{\rho} v = w$ with $w \in L^2_{\delta+1}$, and Proposition \ref{prop:grho_estimates} implies $v = G_{\rho} w$. We may now write $v = C C^{-1} G_{\rho} w$ and argue as in Step 1, to obtain that $(M+T) w = 0$. The invertibility of $M+T$ was shown above, and consequently $w = 0$ and $v = 0$.
\end{proof}

We may now prove the main result.

\begin{proof}[Proof of Proposition \ref{prop:cgo_solutions_main}]
We go back to the proof of Proposition \ref{prop:cgosolutions} and note that \eqref{m_alternative} and \eqref{t_def} imply 
\begin{equation} \label{ik_mt}
I+K_{\rho} = M+T.
\end{equation}
Since for large $\abs{\rho}$, $M+T$ was bounded and invertible on $L^2_{\delta+1}$ with the norm of $(M+T)^{-1}$ uniformly bounded, the same is true for $I+K_{\rho}$.

The condition that $(H-\lambda)u = 0$ with $u = u_{\rho}$ given by \eqref{urho_def} is equivalent with \eqref{conjugated_equation} where $f = -2A \cdot \rho - \tilde{V}$. By Proposition \ref{prop:cgosolutions}, for $\abs{\rho}$ large there is a unique solution $v_{\rho} \in H^1_{\delta}$ given by 
\begin{equation*}
v_{\rho} = G_{\rho} (M+T)^{-1} f.
\end{equation*}
Then \eqref{vkrho_def} follows from \eqref{ik_mt}.

It remains to prove the asymptotics \eqref{arho_asymptotics}, \eqref{rrho_asymptotics}. We use the approximation scheme in Lemma \ref{lemma:mollifier_estimates}, now choosing $\sigma_0 > 0$ small enough. With $\theta > 0$ also small, from the proof of Proposition \ref{prop:conjugation} we guess that the main term in the asymptotics should be 
\begin{equation*}
a_{\rho}(x) = e^{i \chi_{\rho} \phi^{\sharp}}
\end{equation*}
where $\rho$ is given by \eqref{rhoh_def}, $\chi_{\rho}(x) = \chi(h^{\theta} x)$, and $\phi^{\sharp}$ is given by 
\begin{equation} \label{phisharp_cgo}
\phi^{\sharp}(x) = -\frac{1}{2\pi} \int_{\mR^2} \frac{1}{y_1+i y_2} (\nu_1 + i\nu_2) \cdot A^{\sharp}(x-y_1 \nu_1 - y_2 \nu_2) \,dy_1 \,dy_2,
\end{equation}
and $\chi \in C_c^{\infty}(\mR^n)$ with $0 \leq \chi \leq 1$, $\chi = 1$ for $\abs{x} \leq 1/2$, and $\chi = 0$ for $\abs{x} \geq 1$. Using Lemma \ref{lemma:cr_solution} below, we see that $(\nu_1+i\nu_2) \cdot \nabla \phi^{\sharp} = -(\nu_1+i\nu_2) \cdot A^{\sharp}$, and one has the estimates 
\begin{equation*}
\abs{\partial^{\alpha} \phi^{\sharp}(x)} \leq C_{\alpha,\varepsilon} h^{-\sigma_0 \abs{\alpha}} \br{x_T}^{-\varepsilon} \br{x_{\perp}}^{-1}
\end{equation*}
by Lemma \ref{lemma:mollifier_estimates}. Here $x_T = (x \cdot \nu_1)\nu_1 + (x \cdot \nu_2)\nu_2$ and $x_{\perp} = x-x_T$. This shows \eqref{arho_asymptotics}.

Let $u_{\rho}$ be the solution \eqref{urho_def} with $v_{\rho} \in H^1_{\delta}$, and define $r_{\rho} = 1-a_{\rho}+v_{\rho}$. Since $a_{\rho}-1 \in C^{\infty}_c$ we have $r_{\rho} \in H^1_{\delta}$. Also, $r_{\rho}$ satisfies 
\begin{equation} \label{rrho_equation}
e^{-i\rho \cdot x} (H-\lambda) e^{i\rho \cdot x} r_{\rho} = -f
\end{equation}
with 
\begin{multline} \label{flong}
f = (P_{\rho} + 2A \cdot D_{\rho} + \tilde{V}) a_{\rho} = e^{i\chi_{\rho}\phi^{\sharp}} \Big[ -i\chi_{\rho}\Delta\phi^{\sharp} - 2i \nabla\chi_{\rho} \cdot \nabla\phi^{\sharp} - i\phi^{\sharp}\Delta\chi_{\rho} \\
 + (\chi_{\rho} \nabla\phi^{\sharp} + \phi^{\sharp}\nabla \chi_{\rho})^2 + 2\rho \cdot (\nabla \chi_{\rho}) \phi^{\sharp} + 2\rho \cdot (\nabla \phi^{\sharp}) \chi_{\rho} \\
  + 2A \cdot (\nabla \chi_{\rho})\phi^{\sharp} + 2A \cdot (\nabla \phi^{\sharp}) \chi_{\rho} + 2A^{\sharp} \cdot \rho + 2A^{\flat} \cdot \rho + \tilde{V} \Big].
\end{multline}

We note that when $-1 < \delta < 0$ and $s > 0$ is small, 
\begin{equation} \label{ftermestimate}
\norm{\chi_{\rho} \partial^{\alpha} \phi^{\sharp}}_{L^2_{\delta+1}} \leq C h^{-\theta(1-s)} \norm{\partial^{\alpha} \phi^{\sharp}}_{L^2_{\delta+s}} \leq C_{\alpha} h^{-\sigma_0 \abs{\alpha}-\theta(1-s)}
\end{equation}
by Lemma \ref{lemma:cr_solution} and since $\norm{\br{x} \partial^{\alpha} A^{\sharp}}_{L^2} \leq C_{\alpha} h^{-\sigma_0 \abs{\alpha}}$. This and \eqref{ltwo_cgo_assumption}, or its consequence 
\begin{eqnarray} \label{wsharpltwo_estimate}
 & \norm{\br{x} A^{\sharp}}_{L^2} = O(1), \quad \norm{\br{x} A^{\flat}}_{L^2} = o(1), & 
\end{eqnarray}
can be used to show that the $L^2_{\delta+1}$ norms of most terms in \eqref{flong} are $o(\abs{\rho})$ as $\abs{\rho} \to \infty$. The worst terms in \eqref{flong} cancel because of the equation for $\phi^{\sharp}$ and \eqref{rhoh_def}, or more precisely because 
\begin{multline*}
2\chi_{\rho}(\rho \cdot \nabla \phi^{\sharp}) + 2\rho \cdot A^{\sharp} \\
 = 2i h^{-1} \chi_{\rho}  ((1-h^2 \lambda)^{1/2} - 1) \nu_2 \cdot (\nabla \phi^{\sharp} + A^{\sharp}) + 2(1-\chi_{\rho})(\rho \cdot A^{\sharp})
\end{multline*}
and the $L^2_{\delta+1}$ norm of this is $o(\abs{\rho})$ by \eqref{ftermestimate}, \eqref{wsharpltwo_estimate} and since $\abs{x} \geq C h^{-\theta}$ on $\supp(1-\chi_{\rho})$. Thus $\norm{f}_{L^2_{\delta+1}} = o(\abs{\rho})$, and since $r_{\rho}$ is the unique $H^1_{\delta}$ solution of \eqref{rrho_equation}, Proposition \ref{prop:cgosolutions} shows \eqref{rrho_asymptotics}.
\end{proof}

To end this section, we give the two lemmas which were used in the proofs of Proposition \ref{prop:cgo_solutions_main} and Proposition \ref{prop:conjugation}. In both cases it is straightforward to check that the given function is a solution and satisfies the required estimates (see Lemmas 3.1 and 3.2 in \cite{salo}).

\begin{lemma} \label{lemma:cr_solution}
Let $\gamma_1, \gamma_2 \in \mR^n$ with $\abs{\gamma_j} = 1$ and $\gamma_1 \cdot \gamma_2 = 0$. If $f \in C^{\infty}(\mR^n)$ satisfies $\norm{\br{x}^{1+\varepsilon} \partial^{\alpha} f}_{L^{\infty}} < \infty$ for all $\alpha$, then the equation $(\gamma_1 + i\gamma_2) \cdot \nabla \phi = f$
has a solution $\phi \in C^{\infty}(\mR^n)$, given by 
\begin{equation*}
\phi(x) = \frac{1}{2\pi} \int_{\mR^2} \frac{1}{y_1+iy_2} f(x-y_1 \gamma_1-y_2 \gamma_2) \,dy_1 \,dy_2,
\end{equation*}
which satisfies 
\begin{equation*}
\abs{\partial^{\alpha} \phi(x)} \leq C_{\alpha,\varepsilon} \norm{\br{x}^{1+\varepsilon} \partial^{\alpha} f}_{L^{\infty}} \br{x_T}^{-\varepsilon} \br{x_{\perp}}^{-1},
\end{equation*}
where $x_T = (x \cdot \gamma_1) \gamma_1 + (x \cdot \gamma_2) \gamma_2$ and $x_{\perp} = x - x_T$. Also, if $-1 < \delta < 0$ one has the estimates \cite{sylvesteruhlmann}
\begin{equation*}
\norm{\partial^{\alpha} \phi}_{L^2_{\delta}} \leq C \norm{\partial^{\alpha} f}_{L^2_{\delta+1}}.
\end{equation*}
\end{lemma}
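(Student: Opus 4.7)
The plan is to verify the formula for $\phi$ solves the equation, then establish the pointwise and weighted $L^2$ bounds.

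First I would check the equation $(\gamma_1+i\gamma_2)\cdot\nabla\phi=f$. Observe that if we set $g_x(y_1,y_2)=f(x-y_1\gamma_1-y_2\gamma_2)$, then $(\partial_{y_1}+i\partial_{y_2})g_x = -(\gamma_1+i\gamma_2)\cdot\nabla f\,|_{x-y_1\gamma_1-y_2\gamma_2} = -(\gamma_1+i\gamma_2)\cdot\nabla_x [g_x]$. Writing $z=y_1+iy_2$, the fundamental solution identity $\dbar(1/(\pi z))=\delta_0$ then yields, after integration by parts (justified by the decay of $f$),
\begin{equation*}
(\gamma_1+i\gamma_2)\cdot\nabla_x\phi(x) = \frac{1}{2\pi}\int_{\mR^2}\Bigl[(\partial_{y_1}+i\partial_{y_2})\frac{1}{y_1+iy_2}\Bigr]\,g_x(y)\,dy_1\,dy_2 = f(x).
\end{equation*}
Smoothness of $\phi$ and the formula $\partial^\alpha\phi(x) = \frac{1}{2\pi}\int \frac{1}{y_1+iy_2}\partial^\alpha f(x-y_1\gamma_1-y_2\gamma_2)\,dy$ follow from differentiating under the integral, which is permitted by the assumed decay of every derivative of $f$.

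For the pointwise estimate it suffices to treat $\alpha=0$, since $\partial^\alpha\phi$ is given by the same formula with $f$ replaced by $\partial^\alpha f$. I would make the substitution $\zeta_j = x\cdot\gamma_j - y_j$, $j=1,2$, so that with $w = (x\cdot\gamma_1)+i(x\cdot\gamma_2)$ (so $\abs{w}=\abs{x_T}$) and $\zeta = \zeta_1 + i\zeta_2$,
\begin{equation*}
\phi(x) = \frac{1}{2\pi}\int_{\mR^2}\frac{F(\zeta)}{w-\zeta}\,dA(\zeta),\qquad F(\zeta):=f(x_\perp+\zeta_1\gamma_1+\zeta_2\gamma_2).
\end{equation*}
The decay hypothesis gives $\abs{F(\zeta)}\le \norm{\br{x}^{1+\varepsilon}f}_{L^\infty}\,(1+\abs{x_\perp}^2+\abs{\zeta}^2)^{-(1+\varepsilon)/2}$, so the problem reduces to the two-dimensional convolution bound
\begin{equation*}
\int_{\mR^2}\frac{dA(\zeta)}{\abs{w-\zeta}\,(1+\abs{x_\perp}^2+\abs{\zeta}^2)^{(1+\varepsilon)/2}}\;\lesssim\;\br{w}^{-\varepsilon}\br{x_\perp}^{-1}.
\end{equation*}
I would prove this by splitting $\mR^2$ into the three regions $\{\abs{\zeta-w}\le\abs{w}/2\}$ (where $\abs{\zeta}\approx\abs{w}$ so the weight contributes the $\br{w}^{-\varepsilon}$ and the remaining $\br{x_\perp}^{-1}$ integral is controlled in polar coordinates around $w$), $\{\abs{\zeta}\le\abs{w}/2\}$ (where $\abs{w-\zeta}\approx\abs{w}$ factors out and one bounds $\int(1+\abs{x_\perp}^2+\abs{\zeta}^2)^{-(1+\varepsilon)/2}\,dA\lesssim \br{x_\perp}^{1-\varepsilon}$), and the far region $\{\abs{\zeta}\gtrsim \max(\abs{w},1)\}$ (where the weight provides enough decay in $\abs{\zeta}$ to gain both factors). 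Putting the three pieces together gives the asserted bound.

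The weighted $L^2$ estimate is direct from \cite{sylvesteruhlmann}: the operator $f\mapsto\phi$ is (up to a constant multiple and a choice of complex direction) the zero-frequency limit of the solution operator $G_\rho$ of Proposition \ref{prop:grho_estimates}, and the cited reference establishes exactly the bound $\norm{\phi}_{L^2_\delta}\le C\norm{f}_{L^2_{\delta+1}}$ for $-1<\delta<0$; the derivative estimates follow by commuting $\partial^\alpha$ through the integral as noted above.

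The main obstacle is the pointwise bound: the product structure $\br{x_T}^{-\varepsilon}\br{x_\perp}^{-1}$ mixes decay from two different sources (the Cauchy kernel $1/(w-\zeta)$ is responsible for the $\br{x_T}^{-\varepsilon}$ piece while the integrable tail of $F$ produces $\br{x_\perp}^{-1}$), so the region decomposition must be chosen carefully to avoid losing either factor, especially in the transition zone where $\abs{x_\perp}$ and $\abs{w}$ are comparable.
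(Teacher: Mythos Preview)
The paper gives no self-contained proof of this lemma; it merely says the check is straightforward and cites \cite{salo}. Your verification that $\phi$ solves the equation via the $\dbar$ fundamental solution, and your appeal to \cite{sylvesteruhlmann} for the weighted $L^2$ bound, are both correct and in line with what the paper intends.

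The gap is in the pointwise estimate. You reduce matters to the absolute-value inequality
\begin{equation*}
\int_{\mR^2}\frac{dA(\zeta)}{\abs{w-\zeta}\,(1+\abs{x_\perp}^2+\abs{\zeta}^2)^{(1+\varepsilon)/2}}\;\lesssim\;\br{w}^{-\varepsilon}\br{x_\perp}^{-1},
\end{equation*}
but this inequality is false for $0<\varepsilon<1$. Take $w=0$ and $\abs{x_\perp}=a\gg 1$: in polar coordinates the left side equals
\begin{equation*}
2\pi\int_0^\infty (1+a^2+r^2)^{-(1+\varepsilon)/2}\,dr \;\sim\; C_\varepsilon\,a^{-\varepsilon},
\end{equation*}
while the right side is $a^{-1}$. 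The same obstruction appears on the diagonal $\abs{x_\perp}\sim\abs{w}\to\infty$: your first region $\{\abs{\zeta-w}\le\abs{w}/2\}$ alone contributes of order $(1+\abs{x_\perp}^2+\abs{w}^2)^{-(1+\varepsilon)/2}\cdot\abs{w}\sim\abs{w}^{-\varepsilon}$, again larger than the target $\abs{w}^{-1-\varepsilon}$. So no region decomposition can succeed once you have replaced $1/(w-\zeta)$ by $1/\abs{w-\zeta}$; the crude bound only yields $\abs{\phi(x)}\lesssim\br{x}^{-\varepsilon}$. Obtaining the stated product form $\br{x_T}^{-\varepsilon}\br{x_\perp}^{-1}$ would require either exploiting the oscillation of the Cauchy kernel rather than its modulus, or checking the precise formulation and argument in \cite{salo} to which the paper defers.
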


\begin{lemma} \label{lemma:crparam_solution}
Let $U \subseteq \mR^n$ be open, and suppose $\gamma_j(\xi)$ ($j = 1,2$) are smooth functions in $U$ satisfying for any $\xi \in U$ 
\begin{equation*} 
1-\delta < \abs{\gamma_j(\xi)} < 1+\delta, \quad \abs{\gamma_1(\xi) \cdot \gamma_2(\xi)} < \delta, \quad \abs{\partial^{\alpha} \gamma_j(\xi)} \leq 1
\end{equation*}
where $\delta < \frac{1}{100}$ and $\abs{\alpha} \geq 1$. Let $B = B(0,M)$ with $M > 1$ given. Then for any $f \in C^{\infty}_c(B \times U)$, the equation 
$(\gamma_1(\xi) + i\gamma_2(\xi)) \cdot \nabla_x \phi(x,\xi) = f(x,\xi)$
has a solution $\phi \in C^{\infty}(B \times U)$, given by 
\begin{equation*} 
\phi(x,\xi) = \frac{1}{2\pi} \int_{\mR^2} \frac{1}{y_1+i y_2} f(x-y_1 \gamma_1(\xi) - y_2 \gamma_2(\xi),\xi) \,dy_1 \,dy_2,
\end{equation*}
which satisfies 
\begin{equation*}
\abs{\partial_x^{\alpha} \partial_{\xi}^{\beta} \phi(x,\xi)} \leq C_{N,\varepsilon} \Big( \sum_{\abs{\gamma+\delta} \leq N} \norm{\br{x}^{1+\varepsilon} \partial_x^{\gamma} \partial_{\xi}^{\delta} f}_{L^{\infty}(\mR^n \times U)} \Big) M^{\abs{\beta}} \br{x}^{-\varepsilon}
\end{equation*}
when $\abs{\alpha+\beta} \leq N$ and $(x,\xi) \in B \times U$.
\end{lemma}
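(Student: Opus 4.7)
The plan is to interpret $\phi$ as a two-dimensional parameter-dependent Cauchy transform acting on $f(\,\cdot\,,\xi)$ along the almost-orthonormal plane spanned by $\gamma_1(\xi), \gamma_2(\xi)$. Because $\abs{\gamma_j}$ is close to $1$ and $\abs{\gamma_1 \cdot \gamma_2}$ is close to $0$, the linear map $(y_1,y_2) \mapsto y_1\gamma_1(\xi) + y_2\gamma_2(\xi)$ is an isomorphism onto its range, and since $f$ has compact $x$-support the integrand is compactly supported in $(y_1,y_2)$, so differentiation under the integral sign is legitimate.

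To verify that $\phi$ solves the equation I would set $F(y_1,y_2) = f(x - y_1\gamma_1(\xi) - y_2\gamma_2(\xi),\xi)$ and $w = y_1 + iy_2$. By the chain rule
\[
(\gamma_1(\xi) + i\gamma_2(\xi)) \cdot \nabla_x f(x - y_1\gamma_1 - y_2\gamma_2,\xi) = -(\partial_{y_1} + i\partial_{y_2}) F(y_1,y_2) = -2\bar{\partial}_w F,
\]
so that
\[
(\gamma_1(\xi) + i\gamma_2(\xi)) \cdot \nabla_x \phi(x,\xi) = -\frac{1}{\pi}\int_{\mR^2} \frac{\bar{\partial}_w F(w)}{w} \,dy_1 \,dy_2.
\]
The Cauchy--Pompeiu formula (the boundary contribution vanishes by compact support) then returns $F(0) = f(x,\xi)$, which is the desired equation.

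For the pointwise estimate on $\partial_x^\alpha \partial_\xi^\beta \phi$ I would differentiate inside the integral and group the resulting terms. Each $\partial_x$ passes onto $f$ at no cost; each $\partial_\xi$ either lands on the explicit $\xi$-argument of $f$ or, via the chain rule on $f(x - y_1\gamma_1(\xi) - y_2\gamma_2(\xi),\xi)$, produces a factor $y_j$ times a bounded derivative of $\gamma_j$ times an $x$-derivative of $f$. Because $f$ is supported in $B\times U$, the shifted argument of $f$ forces $\abs{y_1\gamma_1 + y_2\gamma_2} \leq 2M$, so each $y_j$ factor is bounded by $CM$; after $\abs{\beta}$ such derivatives one picks up at most $C M^{\abs{\beta}}$, which accounts for the prefactor in the conclusion. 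The integrand is then dominated by a finite sum of terms of the form $\norm{\br{\,\cdot\,}^{1+\varepsilon}\partial_x^\gamma \partial_\xi^\delta f}_{L^{\infty}}\cdot \abs{y}^{-1}\br{x - y_1\gamma_1 - y_2\gamma_2}^{-1-\varepsilon}$ with $\abs{\gamma+\delta} \leq N$.

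The main obstacle is the weighted Cauchy-transform bound
\[
\int_{\mR^2} \frac{\br{x - y_1\gamma_1 - y_2\gamma_2}^{-1-\varepsilon}}{\abs{y}}\,dy_1\,dy_2 \leq C_\varepsilon \br{x}^{-\varepsilon},
\]
which has to hold uniformly in $\xi \in U$ and is what forces the $\br{x}^{-\varepsilon}$ decay factor in the conclusion. I would establish it by splitting $x = x_T + x_\perp$ into the component in $\mathrm{span}(\gamma_1(\xi),\gamma_2(\xi))$ and its orthogonal complement, using the near-orthonormality $\delta < \frac{1}{100}$ to deduce $\abs{x - y_1\gamma_1 - y_2\gamma_2}^2 \geq c\bigl(\abs{x_T - y_1\gamma_1 - y_2\gamma_2}^2 + \abs{x_\perp}^2\bigr)$, and then reducing to the classical planar estimate for $\abs{y}^{-1}$ convolved with $\br{\,\cdot\,}^{-1-\varepsilon}$. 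This portion is entirely parallel to Lemma~3.2 of \cite{salo}; the only genuinely new bookkeeping is the dependence on $\xi$ through $\gamma_j$, which is absorbed using the hypothesis $\abs{\partial^\alpha \gamma_j} \leq 1$ for $\abs{\alpha} \geq 1$ so that the constants $C_{N,\varepsilon}$ are independent of $\xi$ and $M$ (apart from the explicit $M^{\abs{\beta}}$).
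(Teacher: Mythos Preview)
Your proposal is correct and is precisely the ``straightforward check'' the paper has in mind: the paper gives no detailed argument for this lemma, only remarking that the given function is a solution and satisfies the required estimates, with a reference to Lemmas~3.1 and~3.2 of \cite{salo}. Your verification via the Cauchy--Pompeiu formula, the chain-rule bookkeeping that converts each $\partial_\xi$ into either a $\partial_\xi f$ or a factor $y_j$ (bounded by $CM$ on the support) times $\partial_x f$, and the reduction to the planar weighted Cauchy-transform estimate from \cite{salo} are exactly the intended steps.
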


\section{Analytic dependence} \label{sec:analyticity}

We will now proceed to show that the complex geometrical optics solutions $u_{\rho}$ in \eqref{urho_def} depend analytically on $\rho$ in a certain sense. Given the existence of $u_{\rho}$ for large $\rho \in \mC^n$ with $\rho \cdot \rho = \lambda$, this will follow from analytic Fredholm theory as in \cite{uhlmannvasy}.

We will assume that $A$ and $V$ satisfy \eqref{av_assumptions} for some $\gamma_0 > 0$. Let $\nu$ be a fixed vector in $\mR^n$ with $\abs{\nu} = 1$, and let $\lambda > 0$ be fixed. We write $\rho \in \mC^n$ as $\rho = z\nu + \rho_{\perp}$ where $z \in \mC$ and $\rho_{\perp} \in \mC^n$ with $\rho_{\perp} \cdot \nu = 0$. The vectors $\rho$ will be identified with the pairs $(z,\rho_{\perp})$.   Consider the variety 
\begin{equation*}
\Gamma = \{ \rho \in \mC^n \,;\, \rho = z\nu + \rho_{\perp}, \ \abs{\rho} \geq 1, \ \rho_{\perp} \in \mR^n, \text{ and } \rho \cdot \rho = \lambda \}.
\end{equation*}
Identifying $\{\nu\}^{\perp}$ with $\mR^{n-1}$, we view the error term $v = v_{\rho}$ in \eqref{urho_def} as a function of $z \in \mC \smallsetminus \mR$ and $\rho_{\perp} \in \mR^{n-1}$.

The error term $v_{\rho}$ is explicitly given by \eqref{vkrho_def}, and the result will follow by extending all operators in that identity analytically outside the variety $\rho \cdot \rho = \lambda$. The first step is to do this for $G_{\rho}$. The extension follows from a contour integration argument appearing for instance in \cite{eskinralston} and \cite{melrose}. We use the formulation in \cite{uhlmannvasy}. Here $B(X,Y)$ is the space of bounded operators between Banach spaces $X$ and $Y$.

\begin{prop} \label{prop:grho_analytic}
\cite{uhlmannvasy} Suppose that $\gamma > 0$ and fix $\nu \in S^{n-1}$. Then there exists a neighborhood $U$ of $\mR^{n-1} \smallsetminus \{0\}$ in $\mC^{n-1}$ and an analytic map 
\begin{equation*}
(\mC \smallsetminus \mR) \times U \ni (z,\rho_{\perp}) \mapsto \mathscr{G}_{\rho} \in B(e^{-\gamma\br{x}} L^2, e^{\gamma\br{x}} H^2),
\end{equation*}
such that $\mathscr{G}_{\rho} = G_{\rho}$ when $\rho \in \Gamma$.
\end{prop}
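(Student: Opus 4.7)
The plan is to build $\mathscr{G}_\rho$ as a Fourier multiplier on a contour shifted in the $\nu$-direction, following the approach of \cite{eskinralston}, \cite{melrose}, and \cite{uhlmannvasy}. The essential input is the Paley--Wiener theorem for exponentially decaying functions (\cite[Theorem IX.13]{reedsimon}): if $f \in e^{-\gamma_0 \br{x}} L^2$ then $\hat{f}$ extends holomorphically to the tube $T_{\gamma_0} = \{\xi \in \mC^n : \abs{\im\,\xi} < \gamma_0\}$ with $\sup_{\abs{\eta} < \gamma'} \norm{\hat{f}(\,\cdot\, + i\eta)}_{L^2} < \infty$ for all $\gamma' < \gamma_0$. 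On $T_{\gamma_0}$ the symbol $p_\rho(\xi) = \abs{\xi}^2 + 2\rho \cdot \xi$ is polynomial in $(\xi, \rho)$, and $G_\rho$ for $\rho \in \Gamma$ is morally the Fourier multiplier by $1/p_\rho$ with a specific prescription at the zeros of $p_\rho$; the goal is to extend this prescription off $\Gamma$.

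Decompose $\xi = t\nu + \eta$ with $\eta \in \nu^\perp \cong \mR^{n-1}$. As a function of $t$, the symbol is the quadratic $q(t,\eta;z,\rho_\perp) = t^2 + 2zt + \abs{\eta}^2 + 2\rho_\perp\cdot\eta$ with roots $t_\pm(\eta; z, \rho_\perp) = -z \pm (z^2 - \abs{\eta}^2 - 2\rho_\perp\cdot\eta)^{1/2}$. For $(z, \rho_\perp) \in (\mC\setminus\mR) \times U$, with $U$ a suitable complex neighborhood of $\mR^{n-1}\setminus\{0\}$, I would pick $\tau = \tau(z, \rho_\perp) \in \mR$ (nonzero, of sign opposite to $\im\,z$, depending locally analytically on parameters) so that the contour $C_\rho = \mR + i\tau$ stays a definite distance from both roots $t_\pm(\eta)$ uniformly in $\eta \in \mR^{n-1}$. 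Then set
\begin{equation*}
\mathscr{G}_\rho f(x) = (2\pi)^{-n}\int_{\mR^{n-1}}\int_{C_\rho} \frac{e^{i(t\, x\cdot\nu + \eta\cdot x')}\,\hat{f}(t\nu + \eta)}{q(t,\eta; z, \rho_\perp)}\,dt\,d\eta,
\end{equation*}
where $x' = x - (x\cdot\nu)\nu$.

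With this definition, analyticity of $(z, \rho_\perp) \mapsto \mathscr{G}_\rho$ follows from the holomorphy of the integrand and Morera's theorem, the contour being locally constant in parameters. For the boundedness $e^{-\gamma\br{x}}L^2 \to e^{\gamma\br{x}}H^2$, shift the $t$-integration further to $\mR + i\gamma$ for $\gamma \in (0, \gamma_0)$: this is permissible by Cauchy's theorem within the strip of holomorphy of $\hat{f}$, provided $U$ is small enough that the enlarged contour still avoids the roots. The shift produces a factor $e^{-\gamma\, x\cdot\nu}$ in the integrand, supplying exponential decay for $x\cdot\nu > 0$ (with the opposite sign handling $x\cdot\nu < 0$). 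Combined with the Paley--Wiener $L^2$-bound on the slice, the ellipticity $\abs{q} \geq c\abs{\xi}^2$ away from compact $\xi$-sets, and a uniform lower bound on $\abs{q}$ on $C_\rho$ in the compact part, this yields the required weighted $H^2$-estimate. Agreement with $G_\rho$ on $\Gamma \cap [(\mC\setminus\mR)\times U]$ --- which forces $z \in i\mR \setminus \{0\}$, $\rho_\perp \in \mR^{n-1}$ with $\abs{\rho_\perp}^2 = \lambda + \abs{z}^2$ --- is verified by deforming $C_\rho$ back toward $\mR$ and matching the resulting prescription with the one implicit in Proposition \ref{prop:grho_estimates}.

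The main obstacle is the uniform-gap claim. For large $\abs{\eta}$ one has $\abs{\im\,t_\pm(\eta)} \asymp \abs{\eta}$, so any fixed small $\tau$ works; for $\eta$ bounded the roots are displaced from $\mR$ by an amount controlled by $\abs{\im\,z}$ and the distance from $\Gamma$. The intermediate regime calls for an elementary but careful analysis of the real and imaginary parts of $t_\pm(\eta)$ in terms of $(z,\rho_\perp)$. Once the uniform gap is established, the remaining steps reduce to bookkeeping with Paley--Wiener bounds and Cauchy's theorem.
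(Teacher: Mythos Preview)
The paper does not give its own proof here but cites \cite{uhlmannvasy}, and your contour-integration strategy is indeed the one indicated there and in \cite{eskinralston}, \cite{melrose}. However, your concrete implementation has a genuine gap at precisely the spot you flag as ``the main obstacle'': a single horizontal contour $C_\rho = \mR + i\tau$ in the $t$-plane cannot avoid both roots $t_\pm(\eta)$ uniformly in $\eta \in \mR^{n-1}$ when $\rho \in \Gamma$, and hence not on any open set containing $\Gamma$.

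To see this, take $\rho \in \Gamma$, so that $z = is$ with $s \neq 0$, $\rho_\perp \in \mR^{n-1}$, and $\abs{\rho_\perp}^2 = \lambda + s^2$. The roots of your quadratic are
\[
t_\pm(\eta) \;=\; -is \pm \sqrt{\,\lambda - \abs{\eta+\rho_\perp}^2\,}.
\]
As $\abs{\eta+\rho_\perp}^2$ increases past $\lambda$ the square root becomes purely imaginary, and $\im t_+(\eta) = -s + (\abs{\eta+\rho_\perp}^2 - \lambda)^{1/2}$ sweeps through every value in $[-s,\infty)$; in particular $t_+(\eta) = 0$ exactly when $\abs{\eta+\rho_\perp} = \abs{\rho_\perp}$. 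Symmetrically, $\im t_-(\eta)$ covers $(-\infty,-s]$. Hence for every real $\tau$ there is some $\eta$ with a root lying on $\mR + i\tau$, and the uniform-gap claim fails. Your heuristic that ``for $\eta$ bounded the roots are displaced from $\mR$ by an amount controlled by $\abs{\im z}$'' is precisely what breaks: on $\Gamma$ one root actually sits on the real axis for an entire $(n-2)$-sphere of $\eta$'s --- consistent, of course, with the fact that $p_\rho$ has a real codimension-two zero set there.

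So the fixed-$\tau$ ansatz must be dropped. What is needed is a deformation that keeps, for every $\eta$, the two roots on prescribed sides of the contour as $(z,\rho_\perp)$ varies --- in effect an $\eta$-dependent shift, or equivalently a deformation of the full $n$-dimensional $\xi$-contour rather than only the $t$-slice. Once that is in place, the remainder of your outline (Paley--Wiener for the holomorphic extension of $\hat f$, Cauchy/Morera for analyticity in the parameters, ellipticity of $q$ at infinity for the weighted $H^2$ bound, and the matching with $G_\rho$ on $\Gamma$) is the right scaffolding.
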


We remark that for $\rho \notin \Gamma$, $\mathscr{G}_{\rho}$ may not coincide with the natural Fourier multiplier definition of $G_{\rho}$, which explains the different notation for the analytic extension. The next step is to consider $K_{\rho} = (2A \cdot D_{\rho} + \tilde{V}) G_{\rho}$.

\begin{lemma}
If $\gamma \leq \gamma_0/2$ there is an analytic map 
\begin{equation*}
(\mC \smallsetminus \mR) \times U \ni (z,\rho_{\perp}) \mapsto \mathscr{K}_{\rho} \in B(e^{-\gamma \br{x}}L^2,e^{-\gamma \br{x}}L^2),
\end{equation*}
with values in compact operators, such that $\mathscr{K}_{\rho} = K_{\rho}$ when $\rho \in \Gamma$.
\end{lemma}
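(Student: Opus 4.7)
The plan is to set
\[
\mathscr{K}_\rho := (2A \cdot D_\rho + \tilde{V})\mathscr{G}_\rho,
\]
with $\mathscr{G}_\rho$ the analytic extension furnished by Proposition \ref{prop:grho_analytic}, and to verify three things in turn: boundedness on $e^{-\gamma\br{x}}L^2$, analytic dependence on $(z,\rho_\perp)$, and compactness of each individual $\mathscr{K}_\rho$. Agreement with $K_\rho$ on $\Gamma$ is then immediate from $\mathscr{G}_\rho|_\Gamma = G_\rho$ and the fact that the prefactor $2A\cdot D_\rho + \tilde V$ is defined identically in both cases.

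Applying Proposition \ref{prop:grho_analytic} with weight $\gamma$ gives an analytic map $\rho \mapsto \mathscr{G}_\rho \in B(e^{-\gamma\br{x}}L^2, e^{\gamma\br{x}}H^2)$. Since $A \in e^{-\gamma_0\br{x}}W^{1,\infty}$ implies $\tilde V = A^2 + D\cdot A + V \in e^{-\gamma_0\br{x}}L^\infty$, multiplication by $\tilde V$ is continuous $e^{\gamma\br{x}}L^2 \to e^{-(\gamma_0-\gamma)\br{x}}L^2 \subseteq e^{-\gamma\br{x}}L^2$, the last inclusion being precisely the hypothesis $\gamma \leq \gamma_0/2$. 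For the first-order term, $D_\rho = D + \rho$ sends $e^{\gamma\br{x}}H^2$ into $e^{\gamma\br{x}}H^1$ (the commutator $[D_j, e^{\gamma\br{x}}]$ is bounded multiplication by $\frac{\gamma}{i}(\partial_j\br{x})$), and multiplication by $A$ then lands us in $e^{-\gamma\br{x}}L^2$ under the same weight inequality. Analyticity in $(z,\rho_\perp)$ is inherited from that of $\mathscr{G}_\rho$, together with the affine-linear (hence entire) dependence of $D_\rho$ on $\rho$.

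The main obstacle is compactness, which I would establish by the standard combination of Rellich's theorem with an exponential tail estimate. For $\{f_n\}$ bounded in $e^{-\gamma\br{x}}L^2$, set $u_n = \mathscr{G}_\rho f_n$ and $g_n = e^{-\gamma\br{x}}u_n$, so $\{g_n\}$ is bounded in $H^2$. Picking a cutoff $\chi_R \in C_c^\infty(\mR^n)$ equal to $1$ on $B(0,R)$, the sequence $\chi_R g_n$ is bounded in $H^2$ with uniformly compact support, so Rellich-Kondrachov yields a subsequence convergent in $H^1$; pushing this through the bounded multiplier $e^{\gamma\br{x}}\chi_R(2A \cdot D_\rho + \tilde V)$ gives convergence of $\chi_R \mathscr{K}_\rho f_n$ in $e^{-\gamma\br{x}}L^2$. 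For the tail, the pointwise bounds $\abs{A}, \abs{\tilde V} \leq C e^{-\gamma_0\br{x}}$ together with the representation $u_n = e^{\gamma\br{x}}g_n$ yield
\[
\norm{(1-\chi_R)\mathscr{K}_\rho f_n}_{e^{-\gamma\br{x}}L^2} \leq C_\rho\, e^{-(\gamma_0-2\gamma)R}\norm{g_n}_{H^1},
\]
which tends to zero uniformly in $n$ as $R\to\infty$ provided $\gamma < \gamma_0/2$. A standard diagonal extraction over $R\to\infty$ then produces a convergent subsequence of $\mathscr{K}_\rho f_n$. The borderline case $\gamma = \gamma_0/2$ is recovered by redoing the construction with an intermediate weight $\gamma^* < \gamma_0/2$, so that $\mathscr{G}_\rho$ lands in $e^{\gamma^*\br{x}}H^2$; restricting the domain to $e^{-\gamma\br{x}}L^2 \subseteq e^{-\gamma^*\br{x}}L^2$, the tail estimate above acquires the strictly positive decay rate $\gamma_0 - \gamma - \gamma^* > 0$, so the argument goes through unchanged.
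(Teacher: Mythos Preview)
Your proof is correct and follows the same approach as the paper: you define $\mathscr{K}_\rho = (2A\cdot D_\rho + \tilde V)\mathscr{G}_\rho$ exactly as the paper does, and then verify boundedness, analyticity, and compactness. The paper's own proof is a terse three-liner that dismisses compactness with the phrase ``compactness follows from the compact embedding $H^1 \to L^2$,'' whereas you spell this out carefully via Rellich plus an exponential tail estimate, and even handle the borderline $\gamma = \gamma_0/2$ with an intermediate weight --- a level of care the paper does not provide but which is entirely in the same spirit.
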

\begin{proof}
Define $\mathscr{K}_{\rho} = (2A \cdot D_{\rho} + \tilde{V}) \mathscr{G}_{\rho}$. Analyticity and boundedness are clear from Proposition \ref{prop:grho_analytic}, and compactness follows from the compact embedding $H^1 \to L^2$.
\end{proof}

One could now consider invertibility of $I + \mathscr{K}_{\rho}$ by using the analytic Fredholm theorem in several complex variables as in \cite{kuchment_survey}. However, for our purposes it is enough to consider vectors $\rho$ parametrized by one complex variable. The main analyticity result is as follows.

\begin{prop} \label{prop:vrho_analytic}
Let $U_0$ be an open connected set in $\mC$, and let $t \mapsto \rho(t)$ be an analytic map from $U_0$ to $(\mC \smallsetminus \mR) \times U$ such that $\rho(U_0) \cap \Gamma$ contains complex vectors whose norms are arbitrarily large. Assume that $\gamma \leq \gamma_0/2$. There exists a discrete subset $\mathcal{E}_0$ of $U_0$, locally given by the zeros of an analytic function, and an analytic map 
\begin{equation*}
U_0 \smallsetminus \mathcal{E}_0 \ni t \mapsto v_{\rho(t)} \in e^{\gamma \br{x}} H^2
\end{equation*}
such that $v_{\rho}$ with $\rho = \rho(t)$ coincides with \eqref{vkrho_def} when $\rho \in \Gamma$ and $\abs{\rho}$ is large.
\end{prop}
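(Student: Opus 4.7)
The plan is to apply the one-variable analytic Fredholm theorem to the pencil $I + \mathscr{K}_{\rho(t)}$ on $e^{-\gamma\br{x}}L^2$, and then to read off $v_{\rho(t)}$ from the explicit representation \eqref{vkrho_def}. By the lemma preceding the proposition, $t \mapsto I + \mathscr{K}_{\rho(t)}$ is an analytic family of compact perturbations of the identity on $e^{-\gamma\br{x}}L^2$, so at each $t$ the operator is Fredholm of index zero. The source term
\begin{equation*}
F(t) = -2 A \cdot \rho(t) - \tilde{V}
\end{equation*}
is an affine, hence analytic, map into $e^{-\gamma\br{x}}L^2$, since $A,\tilde V \in e^{-\gamma_0\br{x}}L^{\infty} \hookrightarrow e^{-\gamma\br{x}}L^2$ for any $\gamma<\gamma_0$.

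The first step is to produce one point of $U_0$ where $I+\mathscr{K}_{\rho(t)}$ is invertible, so as to rule out the trivial alternative in the Fredholm theorem. By hypothesis $\rho(U_0)\cap\Gamma$ contains vectors of arbitrarily large norm, so we may choose $t_\ast \in U_0$ with $\rho(t_\ast)\in\Gamma$ and $|\rho(t_\ast)|$ so large that Proposition \ref{prop:cgo_solutions_main} applies and gives $I+K_{\rho(t_\ast)}$ invertible on $L^2_{\delta+1}$. To transfer this to $e^{-\gamma\br{x}}L^2$, observe that for $f \in e^{-\gamma\br{x}}L^2 \subset L^2_{\delta+1}$ the unique $u = (I+K_{\rho(t_\ast)})^{-1} f \in L^2_{\delta+1}$ satisfies $u = f - K_{\rho(t_\ast)} u$; the decay hypothesis $\gamma\leq\gamma_0/2$ and Proposition \ref{prop:grho_estimates} show that $K_\rho$ sends $L^2_{\delta+1}$ into $e^{-\gamma\br{x}}L^2$ (because $e^{\gamma\br{x}}A,\,e^{\gamma\br{x}}\tilde V \in L^2\cap L^{\infty}$), and hence $u \in e^{-\gamma\br{x}}L^2$. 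Since $\mathscr{K}_{\rho(t_\ast)} = K_{\rho(t_\ast)}$ on $\Gamma$, this gives the desired invertibility at $t_\ast$.

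The analytic Fredholm theorem in one complex variable now produces a discrete subset $\mathcal{E}_0\subset U_0$, locally the zero set of an analytic function, such that $(I+\mathscr{K}_{\rho(t)})^{-1}$ is meromorphic on $U_0$ with poles in $\mathcal{E}_0$ and analytic on $U_0\smallsetminus\mathcal{E}_0$ as a $B(e^{-\gamma\br{x}}L^2)$-valued function. Setting
\begin{equation*}
v_{\rho(t)} = \mathscr{G}_{\rho(t)}\, (I+\mathscr{K}_{\rho(t)})^{-1} F(t), \qquad t\in U_0\smallsetminus\mathcal{E}_0,
\end{equation*}
and using Proposition \ref{prop:grho_analytic}, we obtain an analytic map into $e^{\gamma\br{x}}H^2$. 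When $\rho(t)\in\Gamma$ with $|\rho(t)|$ large, the analytic extensions $\mathscr{G}_\rho$ and $\mathscr{K}_\rho$ reduce to $G_\rho$ and $K_\rho$, and the displayed formula coincides with \eqref{vkrho_def}, as required.

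The main obstacle is Step 1: the invertibility proven in Section \ref{sec:cgo_solutions} lives naturally on the polynomially weighted space $L^2_{\delta+1}$, whereas the analytic extensions $\mathscr{G}_\rho$ and $\mathscr{K}_\rho$ live on the exponentially weighted space $e^{-\gamma\br{x}}L^2$. The consistency of these two incarnations of the Fredholm alternative is exactly what the bootstrap argument above supplies, and it is here that the assumption $\gamma\leq\gamma_0/2$ is essential.
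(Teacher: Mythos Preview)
Your argument is correct and follows the same overall strategy as the paper: compose the analytic families $t\mapsto \mathscr{K}_{\rho(t)}$ and $t\mapsto \mathscr{G}_{\rho(t)}$, invoke one-variable analytic Fredholm theory on $e^{-\gamma\br{x}}L^2$, and then write down $v_{\rho(t)}=\mathscr{G}_{\rho(t)}(I+\mathscr{K}_{\rho(t)})^{-1}(-2A\cdot\rho(t)-\tilde V)$.

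The only difference worth flagging is how the single point of invertibility is obtained. The paper asserts that $\mathscr{K}_\rho\to 0$ in norm on $e^{-\gamma\br{x}}L^2$ as $\rho\in\Gamma$, $|\rho|\to\infty$, citing Proposition~\ref{prop:cgo_solutions_main}; that proposition, however, literally only gives invertibility (with uniform bound) of $I+K_\rho$ on $L^2_{\delta+1}$, not smallness of $K_\rho$, and indeed the whole point of the pseudodifferential conjugation in Section~\ref{sec:cgo_solutions} is that $K_\rho$ is \emph{not} obviously norm-small. Your bootstrap is a clean way to fill this in: use invertibility on $L^2_{\delta+1}$ from Proposition~\ref{prop:cgo_solutions_main}, and then transfer it to $e^{-\gamma\br{x}}L^2$ via the inclusion $e^{-\gamma\br{x}}L^2\subset L^2_{\delta+1}$ together with the observation that $K_\rho$ maps $L^2_{\delta+1}$ back into $e^{-\gamma\br{x}}L^2$ when $\gamma\le\gamma_0/2$. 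An even shorter version of your step would be to note only \emph{injectivity}: if $u\in e^{-\gamma\br{x}}L^2$ satisfies $(I+K_{\rho(t_\ast)})u=0$, then $u\in L^2_{\delta+1}$ and hence $u=0$ by Proposition~\ref{prop:cgo_solutions_main}; since $\mathscr{K}_{\rho(t_\ast)}$ is compact on $e^{-\gamma\br{x}}L^2$, the Fredholm alternative gives bounded invertibility directly. Either way, your treatment of this point is more explicit than the paper's.
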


\begin{proof}
We know that $t \mapsto \mathscr{K}_{\rho(t)}$ is an analytic family of compact operators on $e^{-\gamma \br{x}} L^2$ for $\gamma \leq \gamma_0/2$, and that $\mathscr{K}_{\rho} \to 0$ in norm when $\rho \in \Gamma$ and $\abs{\rho} \to \infty$ by Proposition \ref{prop:cgo_solutions_main}. Analytic Fredholm theory \cite[Theorem VI.14]{reedsimon} implies that there is a discrete set $\mathcal{E}_0 \subseteq U_0$, which is locally the set of zeros of an analytic function, such that $(I+\mathscr{K}_{\rho(t)})^{-1}$ is an analytic family of bounded operators on $e^{-\gamma \br{x}} L^2$ whenever $t \in U_0 \smallsetminus \mathcal{E}_0$.

For such $t$ we define 
\begin{equation*}
v_{\rho(t)} = \mathscr{G}_{\rho(t)} (I+\mathscr{K}_{\rho(t)})^{-1} (-2A \cdot \rho(t) - \tilde{V}),
\end{equation*}
and the result follows from Proposition \ref{prop:grho_analytic}.
\end{proof}

\section{Uniqueness result} \label{sec:uniqueness}

We assume the hypotheses in Theorem \ref{thm:maintheorem}, and proceed to prove the theorem. The assumption $\Sigma_{\lambda} = \Sigma_{\lambda}'$ and Lemma \ref{lemma:boundarypairing} imply that 
\begin{equation} \label{vanishing_boundary_pairing}
((2A \cdot D + \tilde{V}) u|u') - (u|(2A' \cdot D + \tilde{V}')u') = 0
\end{equation}
for all $u, u' \in e^{\gamma \br{x}} H^1$ such that $(H-\lambda)u = 0$ and $(H'-\lambda)u' = 0$, where $\gamma < \frac{\gamma_0}{2}$.

We would like to use the solutions constructed in Section \ref{sec:cgo_solutions} as $u$ and $u'$. However, these solutions are constructed only for large $\abs{\rho}$ and they may not be in $e^{\gamma \br{x}} H^1$ when $\gamma$ is small. To get around this we will instead use the solutions in Proposition \ref{prop:vrho_analytic} obtained by analyticity.

We make a standard choice of complex vectors given also in \cite{uhlmannvasy}. Fix $\xi \in \mR^n$, and let $\mu, \nu \in \mR^n$ be unit vectors so that $\{\xi,\mu,\nu\}$ is an orthogonal set. We further require that 
\begin{equation} \label{xi_condition}
2\sqrt{\lambda} < \abs{\xi} < 2\sqrt{\lambda + \frac{\gamma_0^2}{4}}.
\end{equation}
For $t > \sqrt{\frac{\abs{\xi}^2}{4} - \lambda}$, define 
\begin{align*}
\rho &= \rho(t) = \frac{\xi}{2} + (t^2+\lambda-\frac{\abs{\xi}^2}{4})^{1/2} \mu + it\nu, \\
\rho' &= \rho'(t) = -\frac{\xi}{2} + (t^2+\lambda-\frac{\abs{\xi}^2}{4})^{1/2} \mu - it\nu.
\end{align*}
Then $\rho \cdot \rho = \rho' \cdot \rho' = \lambda$.

Let $U_0$ be a connected neighborhood of the half line $(\sqrt{\frac{\abs{\xi}^2}{4} - \lambda}, \infty)$ in $\mC$ such that $\re(t^2+\lambda-\frac{\abs{\xi}^2}{4}) > 0$ for $t \in U_0$ and both $\rho(t)$ and $\rho'(t)$ belong to $(\mC \smallsetminus \mR) \times U$ for $t \in U_0$. Here we have used the notations in Section \ref{sec:analyticity} and the principal branch of the square root, so that $\rho(t)$ and $\rho'(t)$ are analytic maps in $U_0$.

Next, we take $u_{\rho}$ and $u_{\rho'}'$ to be solutions of $(H-\lambda)u = 0$ and $(H'-\lambda)u' = 0$, of the form 
\begin{equation} \label{urho_urhoprime_form}
u_{\rho} = e^{i\rho \cdot x}(1+v_{\rho}), \qquad u_{\rho'}' = e^{i\rho' \cdot x}(1+v_{\rho'}')
\end{equation}
where $v_{\rho}$ and $v_{\rho'}'$ are given by Proposition \ref{prop:vrho_analytic} and can be assumed to be in $e^{\gamma' \br{x}} H^2$ for any small $\gamma' > 0$. This works for all $t$ in the set $U_0 \smallsetminus (\mathcal{E} \cup \mathcal{E}')$ where $\mathcal{E}$ and $\mathcal{E}'$ are discrete subsets of $U_0$ which are locally given by the zeros of analytic functions. Also the set $\mathcal{E} \cup \mathcal{E}'$ is discrete in $U_0$ since it is locally given by the zeros of a product of analytic functions.

Now, if $t \in U_0 \smallsetminus (\mathcal{E} \cup \mathcal{E}')$ and additionally $t < \gamma_0/2$, we may insert the solutions $u_{\rho}$ and $u_{\rho'}$ in \eqref{vanishing_boundary_pairing}. This shows that for such $t$ we have $I(t) = 0$, where 
\begin{multline}
I(t) = ((2A \cdot (D+\rho) + \tilde{V})(1+v_{\rho}) | e^{-ix \cdot \xi}(1+v_{\rho'}') ) \\
 - ( e^{ix \cdot \xi}(1+v_{\rho}) | (2A' \cdot (D+\rho') + \tilde{V}')(1+v_{\rho'}') ).
\end{multline}
By Proposition \ref{prop:vrho_analytic}, $I(t)$ is analytic in $U_0 \smallsetminus (\mathcal{E} \cup \mathcal{E}')$. On the other hand we already saw that $I(t) = 0$ in the intersection of this set and $\{ t < \gamma_0/2 \}$. This intersection must contain a small interval. Then by analyticity $I(t) \equiv 0$, and in particular $I(t) = 0$ as $t \to \infty$ on the positive real axis. But in this case $v_{\rho}$ and $v_{\rho'}'$ are given by Proposition \ref{prop:cgo_solutions_main}, and they have the asymptotics \eqref{arho_asymptotics}, \eqref{rrho_asymptotics}.

If $\phi$ and $\phi'$ are given by \eqref{phi_cgo} for $A$ and $A'$, respectively, a computation using the asymptotics shows 
\begin{align}
0 &= \lim_{t \to \infty} \frac{I(t)}{t} = \lim_{t \to \infty} ( (2(A \cdot \frac{\rho}{t})a_{\rho} | e^{-ix \cdot \xi} a_{\rho'}') - (e^{ix \cdot \xi} a_{\rho} | 2(A' \cdot \frac{\rho'}{t}) a_{\rho'}' ) ) \label{nonlinear_fourier_transform} \\
 &= 2(A \cdot (\mu + i\nu) e^{i\phi} | e^{-ix \cdot \xi} e^{i\phi'}) - 2(e^{ix \cdot \xi} e^{i\phi} | A' \cdot (\mu-i\nu) e^{i\phi'}) \notag \\
 &= 2 \int_{\mR^n} e^{ix \cdot \xi} e^{i\Phi} (\mu + i\nu) \cdot (A-A') \,dx \notag
\end{align}
Here $\Phi = \phi - \overline{\phi'}$ is the Cauchy transform 
\begin{equation*}
\Phi(x) = -\frac{1}{2\pi} \int_{\mR^2} \frac{1}{y_1+iy_2} (\mu+i\nu) \cdot (A-A')(x-y_1 \mu - y_2\nu) \,dy_1 \,dy_2.
\end{equation*}
Now \eqref{nonlinear_fourier_transform} says that a certain nonlinear Fourier transform related to $A-A'$ vanishes for $\xi$ satisfying \eqref{xi_condition}, where $\mu$ and $\nu$ are unit vectors and $\{\xi,\mu,\nu\}$ is an orthogonal set. An argument of Eskin and Ralston \cite{eskinralston}, reproduced in Lemma 6.2 of \cite{salo}, shows that 
\begin{equation*}
\int_{\mR^n} e^{ix \cdot \xi} e^{i\Phi} (\mu + i\nu) \cdot (A-A') \,dx = \int_{\mR^n}  e^{ix \cdot \xi} (\mu + i\nu) \cdot (A-A') \,dx.
\end{equation*}
This means that the nonlinear Fourier transform reduces to the usual one. Choosing suitable vectors $\xi$, $\mu$, $\nu$, we see that the Fourier transform of each component of $d(A-A')$ vanishes on the shell \eqref{xi_condition}. Since $d(A-A')$ is exponentially decaying, so the Fourier transform is analytic, we get $dA \equiv dA'$. Thus the magnetic fields coincide.

Finally, we show that $V = V'$. Since $d(A-A') = 0$, Lemma \ref{lemma:gauge_decay} shows that $A-A' = \nabla \alpha$ with $\alpha \in e^{-\gamma_0\br{x}} W^{2,\infty}$. Then by gauge invariance (Lemma \ref{lemma:gaugeinvariance}), the scattering matrices for the coefficients $(A',V')$ and $(A'+\nabla \alpha,V') = (A,V')$ are the same. We may thus assume that $A = A'$ in the argument. Since the scattering matrices for $(A,V)$ and $(A,V')$ at energy $\lambda > 0$ coincide, from Lemma \ref{lemma:boundarypairing} we obtain 
\begin{equation*}
\int_{\mR^n} (V-V') u \overline{u'} \,dx = 0
\end{equation*}
for solutions in $e^{\gamma\br{x}} H^1$ if $\gamma < \gamma_0/2$.

Take $u = u_{\rho}$ and $u' = u_{\rho'}'$ as in \eqref{urho_urhoprime_form}, with $\rho = \rho(t)$ and $\rho' = \rho'(t)$ as earlier. Repeating the argument given above, we may take the limit as $t \to \infty$ and use the asymptotics in Proposition \ref{prop:cgo_solutions_main}. In particular we have $a_{\rho} \overline{a_{\rho'}'} \to 1$ since $A = A'$, and we obtain 
\begin{equation*}
\int_{\mR^n} e^{ix \cdot \xi} (V-V') \,dx = 0
\end{equation*}
for $\xi$ in the frequency shell \eqref{xi_condition}. Again the exponential decay of coefficients implies that the Fourier transform is analytic, and it follows that $V = V'$. This ends the proof of Theorem \ref{thm:maintheorem}.

\addcontentsline{toc}{chapter}{Bibliography}
\providecommand{\bysame}{\leavevmode\hbox to3em{\hrulefill}\thinspace}
\providecommand{\href}[2]{#2}


\begin{thebibliography}{10}

\bibitem{calderon}
A.~P. Calder{\'o}n, \emph{On an inverse boundary value problem}, Seminar on
  Numerical Analysis and its Applications to Continuum Physics, Soc. Brasileira
  de Matem{\'a}tica, R{\'i}o de Janeiro, 1980.

\bibitem{bukhgeim}
A.~L.~Bukhgeim, \emph{{Recovering the potential from Cauchy
data in two dimensions}},  J. Inverse Ill-Posed Probl. \textbf{16} (2008),
19--33.

\bibitem{dimassisjostrand}
M.~Dimassi and J.~Sj{\"o}strand, \emph{Spectral asymptotics in the
  semi-classical limit}, London Mathematical Society Lecture Note Series 268,
  Cambridge University Press, 1999.

\bibitem{eskinralston}
G.~Eskin and J.~Ralston, \emph{{Inverse scattering problem for the
  Schr{\"o}dinger equation with magnetic potential at a fixed energy}}, Comm.
  Math. Phys. \textbf{173} (1995), 199--224.

\bibitem{eskinralston_ima}
G.~Eskin and J.~Ralston, \emph{{Inverse scattering problems for Schr{\"o}dinger operators with
  magnetic and electric potentials}}, Inverse problems in wave propagation, IMA
  Vol. Math. Appl., vol.~90, Springer, New York, 1997, pp.~147--166.

\bibitem{GrinNov}
P.~Grinevich and R.~G.~Novikov, \emph{Transparent potentials at
fixed energy in dimension two. Fixed-energy dispersion relations for the fast decaying potentials}, Comm. Math. Phys. {\bf 174} (1995), 409-446.

\bibitem{H2}
L.~H{\"o}rmander, \emph{The analysis of linear partial differential
  operators}, vol. I-II, Springer-Verlag, Berlin, 1983.

\bibitem{hormander_ucp}
L.~H{\"o}rmander, \emph{Uniqueness theorems for second order elliptic differential
  equations}, Comm. PDE \textbf{8} (1983), 21--64.

\bibitem{isozaki}
H.~Isozaki, \emph{{Inverse scattering theory for Dirac operators}}, Ann. I. H.
  P. Physique Th{\'e}orique \textbf{66} (1997), 237--270.

\bibitem{kenigponcevega}
C.~Kenig, G.~Ponce, and L.~Vega, \emph{{Smoothing effects and local existence theory for the generalized nonlinear Schr\"odinger equations}}, Invent. Math. \textbf{134} (1998), 489--545.

\bibitem{melrose}
R.~B. Melrose, \emph{Geometric scattering theory}, Cambridge University Press,
  1995.
 
\bibitem{nachman}
A.~Nachman, \emph{{Reconstructions from boundary measurements}}, Ann. of Math. \textbf{128} (1988), 531-576.

\bibitem{nakamurasunuhlmann}
G.~Nakamura, Z.~Sun, and G.~Uhlmann, \emph{{Global identifiability for an
  inverse problem for the Schr{\"o}dinger equation in a magnetic field}}, Math.
  Ann. \textbf{303} (1995), 377--388.

\bibitem{nakamurauhlmann}
G.~Nakamura and G.~Uhlmann, \emph{Global uniqueness for an inverse boundary
  problem arising in elasticity}, Invent. Math. \textbf{118} (1994), 457--474.

\bibitem{novikov}
R.~G.~Novikov, \emph{{A multidimensional inverse spectral problem for the equation $-\Delta\psi+(v(x)-E u(x))\psi=0$}}, Funct. Anal. Appl.
\textbf{22} (1988), 263--272.

\bibitem{novikov_exp}
R.~G.~Novikov, \emph{{The inverse scattering problem at fixed energy for the
  three-dimensional Schr\"odinger equation with an exponentially decreasing
  potential}}, Comm. Math. Phys. \textbf{161} (1994), 569--595.

\bibitem{novikovkhenkin}
R.~G.~Novikov and G.~M.~Khenkin, \emph{{The $\bar{\partial}$-equation in the
  multidimensional inverse scattering problem}}, Russ. Math. Surv. \textbf{42}
  (1987), 109--180.

\bibitem{ramm}
A.~G.~Ramm, \emph{{Recovery of the potential from fixed energy scattering data}},
Inverse Problems \textbf{4} (1988), 877-886.

\bibitem{reedsimon}
M.~Reed and B.~Simon, \emph{Methods of modern mathematical physics}, vol.
  I-II, Academic Press, 1975, 1980.

\bibitem{salothesis}
M.~Salo, \emph{{Inverse problems for nonsmooth first order perturbations of the
  Laplacian}}, Ann. Acad. Sci. Fenn. Math. Diss. \textbf{139} (2004).

\bibitem{salo}
M.~Salo, \emph{{Semiclassical pseudodifferential calculus and the
  reconstruction of a magnetic field}}, Comm. PDE \textbf{31} (2006), 1639--1666.

\bibitem{sun}
Z.~Sun, \emph{{An inverse boundary value problem for Schr{\"o}dinger operators
  with vector potentials}}, Trans. Amer. Math. Soc. \textbf{338} (1993), 953--969.

\bibitem{sun_exponential}
Z.~Sun, \emph{{Note on exponentially growing solutions for the Schr\"odinger equations}}, Commun. Appl. Anal. \textbf{9} (2005), 327--335.

\bibitem{sylvesteruhlmann}
J.~Sylvester and G.~Uhlmann, \emph{A global uniqueness theorem for an inverse
  boundary value problem}, Ann. of Math. \textbf{125} (1987), 153--169.

\bibitem{tolmasky}
C.~Tolmasky, \emph{{Exponentially growing solutions for nonsmooth first-order perturbations of the Laplacian}}, SIAM J. Math. Anal. \textbf{29} (1998), 116--133.

\bibitem{uhlmannasterisque}
G.~Uhlmann, \emph{Inverse boundary value problems and applications},
  Ast{\'e}risque (1992), no.~207, 153--211.

\bibitem{uhlmannvasy}
G.~Uhlmann and A.~Vasy, \emph{Fixed energy inverse problem for exponentially
  decreasing potentials}, Methods Appl. Anal. \textbf{9} (2002), 239--248.

\bibitem{weder}
R.~Weder, \emph{{Completeness of averaged scattering solutions}}, Comm.~PDE \textbf{32} (2007), 675-691.

\bibitem{wederyafaev}
R.~Weder and D.~Yafaev, \emph{{On inverse scattering at a fixed energy for potentials with a regular behaviour at infinity}}, Inverse Problems \textbf{21} (2005), 1937-1952.

\bibitem{kuchment_survey}
M.~G. Zaidenberg, S.~G. Krein, P.~A. Kuchment, and A.~A. Pankov, \emph{Banach bundles and linear operators}, Russian Math. Surveys \textbf{30} (1975), 115--175.

\end{thebibliography}
\end{document}